\newtheorem{theorem}{Theorem}[section]
\newtheorem{remark}{Remark}[section]
\newtheorem{lemma}{Lemma}[section]
\numberwithin{equation}{section}
\def\d{\mathrm{d}}
\def\no{\nonumber}
\def\R{\mathbb{R}}
\def\l{\langle}
\def\r{\rangle}
\newcounter{wronumber}\setcounter{wronumber}{1}
\begin{document}
\title[From VPB to incompressible NSFP]
			{Convergence from two-species Vlasov-Poisson-Boltzmann system to two-fluid incompressible Navier-Stokes-Fourier-Poisson system with Ohm's law}

\author[Zhendong Fang ]{Zhendong Fang}
\address[Zhendong Fang]
        {\newline School of Mathematics and Statistics, Wuhan University, Wuhan, 430072, P. R. China}
\email{fangzhendong@whu.edu.cn}
\author[Ning Jiang]{Ning Jiang}
\address[Ning Jiang]{\newline School of Mathematics and Statistics, Wuhan University, Wuhan, 430072, P. R. China}
\email{njiang@whu.edu.cn}

\thanks{2020}

\maketitle

\begin{abstract}
	In this paper, we justify the convergence from the two-species Vlasov-Poisson-Boltzmann (in briefly,VPB) system to the  two-fluid incompressible Navier-Stokes-Fourier-Poisson (in briefly, NSFP) system with Ohm's law in the context of classical solutions. We prove the uniform estimates with respect to the Knudsen number $\varepsilon$ for the solutions to the two-species VPB system near equilibrium by treating the strong interspecies interactions. Consequently,  we prove the convergence to the  two-fluid incompressible NSFP as $\varepsilon$ go to 0. \\
	
	\noindent\textsc{Keywords.} Two-species Vlasov-Poisson-Boltzmann system; global-in-time classical solutions; incompressible Navier-Stokes-Fourier-Poisson system; Ohm's law; uniform energy estimates; convergence \\
	
	\noindent\textsc{AMS subject classifications.} 76P05; 82C40; 82D05
\end{abstract}





\section{Introduction}

\subsection{Two-species Vlasov-Poisson-Boltzmann system}
The two-species VPB system (see pages 3 to 4 of Chapter 1 in \cite{Arsenio-SaintRaymond-2016})
\begin{equation}\label{two-species VPB}
\begin{cases}
&\partial_{t}f^{+}+v\cdot\nabla_{x}f^{+}+\tfrac{q}{m}\nabla_{x}\phi
\cdot\nabla_{v}f^{+}=B(f^{+}, f^{+})+B(f^{+}, f^{-}),\\
&\partial_{t}f^{-}+v\cdot\nabla_{x}f^{+}-\tfrac{q}{m}\nabla_{x}\phi
\cdot\nabla_{v}f^{-}=B(f^{-}, f^{-})+B(f^{-}, f^{+}),\\
&\Delta_{x}\phi=\tfrac{q}{\epsilon_{0}}\int_{\R^{3}}(f^{+}-f^{-})dv.
\end{cases}
\end{equation}
describes the evolution of a gas of two species of oppositely charged and same mass particles (cations of $q>0$ and mass $m>0$, and anions of charge $-q<0$ and $m>0$) under the influence of the interactions with themselves through collisions and their self-consistent electrostatic field. The first equation in \eqref{two-species VPB} is the equation of Vlasov-Boltzmann for cations, the second one is the equation of Vlasov-Boltzmann for anions and the last is the Gauss' law. Here, the particle number densities $f^{\pm}(t,x,v)\geq 0$ represent the distributions of the positively charged ions (i.e. cations) and the negatively charged ions (i.e., anions) respectively, at time $t\geq 0$, position $x=(x_{1}, x_{2}, x_{3})\in \mathbb{R}^{3}$ and velocity $v=(v_{1}, v_{2}, v_{3})\in\mathbb{R}^{3}$. The physical constants $\epsilon_{0}>0$ is the vacuum permittivity (or electric constant) and $\nabla_{x}\phi$ describes the electric field. The collision operators $B(f^{+},f^{-})$ and $B(f^{-},f^{+})$ have been added to the right-hand sides of the respective Vlasov-Boltzmann equations in $(\ref{two-species VPB})$ in order to account for the variations in the densities $f^{+}\geq0$
and $f^{-}\geq0$ due to interspecies collisions. The self-consistent electric potential $\phi=\phi(t,x)\in\mathbb{R}$ is coupled with the function $f^{+}-f^{-}$ through the Poisson equation. The bilinear function $B$ with hard-sphere interaction is defined by
\begin{equation}
B(f,g)(t,x,v)=\int_{\mathbb{R}^{3}}\int_{\mathbb{S}^{2}}(f'g'_{*}-fg_{*})|(v-v_{*})\cdot \omega|d\omega d v_{*},
\end{equation}
where
\begin{equation}\no
\begin{aligned}
f&=f(t,x,v),f'=f(t,x,v'),f_{*}=f(t,x,v_{*}),f'_{*}=f(t,x,v'_{*}),\\
v'&=v-[(v-v_{*})\cdot\omega]\omega,v'_{*}=v_{*}+[(v-v_{*})\cdot\omega]\omega,\omega\in\mathbb{S}^{2}.
\end{aligned}
\end{equation}

We will consider the data $f^{\pm}(t,x,v)$ which are fluctuations $g^{\pm}$ of order Mach number $Ma$
\begin{equation}\no
f^{\pm}(t,x,v)=M(1+Ma\,\, g^{\pm}(t,x,v)),
\end{equation}
around a global normalized Maxwellian equilibrium $M(v)=\tfrac{1}{(2\pi)^{\frac{3}{2}}}e^{-\frac{|v|^{2}}{2}}$ and take the dimensionless number $Kn$, $St$ and $Ma$ all as $\varepsilon$ to obtain in the fast relaxation limit. After nondimensionalization, the scaled two-species VPB system (see Section 2.1, Section 2.2 and Section 2.4.7 of Chapter 2 in \cite{Arsenio-SaintRaymond-2016}) is in the following form:
\begin{equation}\label{scaled VPB}
\begin{cases}
\varepsilon\partial_{t}f^{\pm}_{\varepsilon}+v\cdot\nabla_{x}f_{\varepsilon}^{\pm}\pm\alpha\nabla_{x}\phi_{\varepsilon}\cdot\nabla_{v}f^{\pm}_{\varepsilon}=\tfrac{1}{\varepsilon}B(f_{\varepsilon}^{\pm},f_{\varepsilon}^{\pm})+\tfrac{\delta^{2}}{\varepsilon}B(f_{\varepsilon}^{\pm},f_{\varepsilon}^{\mp}),\\
f_{\varepsilon}^{\pm}=M(1+\varepsilon g_{\varepsilon}^{\pm}),\\
\Delta_{x}\phi_{\varepsilon}=\tfrac{\alpha}{\varepsilon}\int_{\R^{3}}(g_{\varepsilon}^{+}-g_{\varepsilon}^{-})Mdv,
\end{cases}
\end{equation}
where $\alpha>0$ measures the electric repulsion according to Gauss' law, and $\delta>0$ represents the strength of interactions. The size of the bounded parameter $\delta$ will be compared to the Knudsen number $Kn=\varepsilon$ and divided into three cases:

$\bullet$ $\delta\sim$ 1, strong interspecies interactions;

$\bullet$ $\delta=o(1)$ and $\tfrac{\delta}{\varepsilon}$ unbounded, weak interspecies interactions;

$\bullet$ $\delta=O(\varepsilon)$, very weak interspecies interactions.

In this paper, we consider the strong interspecies interactions, which is the most singular case. We also suppose $\alpha=\varepsilon$ as in \cite{Arsenio-SaintRaymond-2016}. Therefore, we have the scaled two-species VPB system as follow
\begin{equation}\label{VPB-g}
\begin{cases}
&\varepsilon\partial_{t}f_{\varepsilon}+v\cdot\nabla_{x}f_{\varepsilon}-\varepsilon g_{\varepsilon}v\cdot\nabla_{x}\phi_{\varepsilon}+\varepsilon\nabla_{v}f_{\varepsilon}\cdot\nabla_{x}\phi_{\varepsilon}+\frac{1}{\varepsilon}\mathcal{L}_{1}f_{\varepsilon}=Q(f_{\varepsilon}, f_{\varepsilon}),\\
&\varepsilon\partial_{t}g_{\varepsilon}+v\cdot\nabla_{x}g_{\varepsilon}-2v\cdot\nabla_{x}\phi_{\varepsilon}-\varepsilon f_{\varepsilon}v\cdot\nabla_{x}\phi_{\varepsilon}+\varepsilon\nabla_{v}g_{\varepsilon}\cdot\nabla_{x}\phi_{\varepsilon}+\frac{1}{\varepsilon}\mathcal{L}_{2}g_{\varepsilon}
=Q(g_{\varepsilon}, f_{\varepsilon}),\\
&\Delta_{x}\phi_{\varepsilon}=\int_{\mathbb{R}^{3}}g_{\varepsilon}Mdv,
\end{cases}
\end{equation}
with the initial data
\begin{equation}\label{f's intial}
\begin{aligned}
f_{\varepsilon,0}^{\pm}(x,v)&=M(1+\varepsilon g_{\varepsilon,0}^{\pm}(x,v))\geq0,\\
\Delta_{x}\phi_{\varepsilon,0}(x)&=\int_{\R^{3}}(g_{\varepsilon,0}^{+}(x,v)-g_{\varepsilon,0}^{-}(x,v))Mdv,
\end{aligned}
\end{equation}
where we introduce $f_{\varepsilon}=g^{+}_{\varepsilon}+g^{-}_{\varepsilon}$, $g_{\varepsilon}=g^{+}_{\varepsilon}-g^{-}_{\varepsilon}$ and the bilinear symmetric operator $Q$, the linearized Boltzmann operators $\mathcal{L}_{1}$ and $\mathcal{L}_{2}$ are given by
\begin{equation}\label{linear operator}
\begin{aligned}
Q(f,g)&=\frac{1}{M}[B(Mf,Mg)+B(Mg,Mf)],
\mathcal{L}_{1}(f,g)=-\frac{2}{M}[B(Mf,M)+B(M,Mg)],\\
\mathcal{L}_{1}f&=\mathcal{L}_{1}(f,f)=-\frac{2}{M}[B(Mf,M)+B(M,Mf)],
\mathcal{L}_{2}f=-\frac{2}{M}B(Mf,M).
\end{aligned}
\end{equation}

Furthermore, the collisional frequency $\nu(v)$ for the hard sphere interaction is given by
\begin{equation}\label{1.10}
\nu(v)=\int_{\mathbb{R}^{3}}|v-v_{*}|M(v_{*})dv_{*}.
\end{equation}

It is easy to see that there exist two positive number $C_{1},C_{2}>0$ such that
\begin{equation}\label{nu}
 C_{1}(1+|v|)\leq\nu(v)\leq C_{2}(1+|v|).
 \end{equation}

In \cite{BGL1} and Proposition 5.7, Proposition 5.9 in \cite{Arsenio-SaintRaymond-2016},  the null space of the operator $\mathcal{L}_{1}$ and $\mathcal{L}_{2}$ are the five-dimensional space spanned and the one-dimensional space spanned as
\begin{equation}\label{Coll-I}
\begin{aligned}
\mathcal{N}_{1}=\text{Ker} \mathcal{L}_{1}&=\text{Span}\{1,v_{1},v_{2},v_{3},|v|^{2}\},\\
\mathcal{N}_{2}=\text{Ker} \mathcal{L}_{2}&=\text{Span}\{1\}.
\end{aligned}
\end{equation}

We note that the operators $\mathcal{L}_{1}$ and $\mathcal{L}_{2}$ are positive self-adjoint compact defined in $L^{2}(Mdv)$ (see Proposition 5.7 and Proposition 5.9 in \cite{Arsenio-SaintRaymond-2016} for instance). Then, we can define two projection operators $P_{i}$ from $L^{2} (Mdv)$ to $\mathcal{N}_{i}$. In what follows, we will use  $P_{1}f_{\varepsilon}$  and $P_{2}g$ as the form of
\begin{equation}\label{Decom}
\begin{aligned}
P_{1}f_{\varepsilon}&=a_{\varepsilon}(t,x)+b_{\varepsilon}\cdot v+c_{\varepsilon}(t,x)|v|^{2},\\
P_{2}g_{\varepsilon}&=d_{\varepsilon}(t,x),
\end{aligned}
\end{equation}
where $a_{\varepsilon}=\int f_{\varepsilon}(\frac{5}{2}-\frac{|v|^{2}}{2})Md v ,b_{\varepsilon}=(b_{\varepsilon,1},b_{\varepsilon,2},b_{\varepsilon,3})=\int f_{\varepsilon}vMd v, c_{\varepsilon}=\int f_{\varepsilon}(\frac{|v|^{2}}{6}-\frac{1}{2})Md v$ and $d_{\varepsilon}=\int g_{\varepsilon}Md v$.

\subsection{Notation and main results.}
In this paper, we the symbol $C(\cdot)$ denotes that constants which depend on some parameters. In addition, $A\lesssim B$ means that there exists a  constant $C > 0$ such that $A\leq CB$ and $A\thicksim B$ means that there exist two positive numbers $C_{1}, C_{2}>0$ such that $C_{1}A\leq B\leq C_{2}A$. And we introduce the following spaces. The $L^{p}$ spaces by the name of the concerned variable, Namely,
\begin{equation*}
  L_{x}^{p}=L^{p}(dx),L_{v}^{p}(\omega)=L^{p}(\omega Mdv),L^{\infty}_{x}=L^{\infty}(dx),
\end{equation*}
with the norms
\begin{equation*}
\|\phi\|_{L^{p}_{x}}=(\int_{\R^{3}}|\phi(x)|^{p}dx)^{\frac{1}{p}}<\infty,\|f\|_{L^{p}_{v}(\omega)}=(\int_{\R^{3}}|f|^{p}\omega Mdv)^{\frac{1}{p}}<\infty,\|\phi\|_{L^{\infty}_{x}}=\text{ess} \sup\limits_{x\in\R^{3}}|\phi (x)|<\infty,
\end{equation*}
where $\omega=1$ or $\nu$. If $\omega=1$, we denote by $L^{p}_{v}=L^{p}_{v}(1)$. Next we introduce $L^{p}_{x}L^{q}_{v}(\omega)$ space for $p,q\in[1,+\infty]$ endowed with the norms as follow
\begin{equation*}
  \begin{aligned}
    \| g \|_{L^p_x L^q_v (w)} =
    \left\{
      \begin{array}{l}
        \Big( \int_{\R^3} \| g (x, \cdot) \|^p_{L^q_v (w)} \d x \Big)^\frac{1}{p}  \qquad\qquad\qquad p,q \in [ 1, \infty ) \,, \\
        \textrm{ess}\sup_{x \in \R^3} \| g (x,\cdot) \|_{L^q_v (w)}  \qquad\qquad\quad\ \  p = \infty \,, q \in [ 1, \infty ) \,, \\
        \Big( \int_{\R^3} \big| \textrm{ess}\sup_{v \in \R^3} |g (x,v) \big|^p \d x \Big)^\frac{1}{p}  \quad p \in [ 1, \infty ) \,, q = \infty \,, \\
        \textrm{ess}\sup_{(x,v) \in \R^3 \times \R^3} |g (x,v) w(v)| \,, \qquad\quad \ p = q = \infty \,.
      \end{array}
    \right.
  \end{aligned}
\end{equation*}

If $w=1$, we also denote $L^p_x L^q_v = L^p_x L^q_v (1)$. Moreover, if $p = q$, the notation $L^p_{x,v} (w)$ means $L^p_x L^p_v (w)$. For $p=2$, we use $ \l\cdot , \cdot \r_{L^2_{x,v}} $, $\l \cdot , \cdot \r_{L^2_v}$ and $\l \cdot , \cdot \r_{L^2_x}$ to denote the inner product in the Hilbert spaces $L^2_{x,v}$, $L^2_v$ and $L^2_x$.

The multi-index $\alpha=[\alpha_1,\alpha_2,\alpha_3]$ and $\beta=[\beta_1,\beta_2,\beta_3]$ in $\mathbb{N}^3$ will be used to record spatial and velocity derivatives, respectively. The $(\alpha, \beta)^{th}$ partial derivative denoted by
 $$ \partial_x^\alpha \partial_v^\beta = \partial_{x_1}^{\alpha_1} \partial_{x_2}^{\alpha_2} \partial_{x_3}^{\alpha_3} \partial_{v_1}^{\beta_1} \partial_{v_2}^{\beta_2} \partial_{v_3}^{\beta_3} \,.$$

We denote $\alpha \leq \tilde{\alpha}$ if each component of $\alpha \in \mathbb{N}^3$ is not greater than that of $\tilde{\alpha}$. The symbol $\alpha < \tilde{\alpha}$ means $\alpha \leq \tilde{\alpha}$ and $|\alpha| < |\tilde{\alpha}|$, where $| \alpha | = \alpha_1 + \alpha_2 + \alpha_3$. Next, we introduce the Sobolev spaces $H^N_x$, $H^N_x L^2_v$, $H^N_x L^2_v (\nu)$, $H^N_{x,v}$ and $H^N_{x,v} (\nu)$ which endowed with the norms
\begin{equation*}
  \begin{aligned}
    & \| \phi \|_{H^N_x} = \Big( \sum_{|\alpha| \leq N} \| \partial^\alpha_x \phi \|_{L^2_x}^2 \Big)^\frac{1}{2} \,, \ \| g \|_{H^N_x L^2_v} = \Big( \sum_{|\alpha| \leq N} \| \partial^\alpha_x g \|^2_{L^2_{x,v}} \Big)^\frac{1}{2} \,, \\
    & \| g \|_{H^N_x L^2_v (\nu)} = \Big( \sum_{|\alpha| \leq N} \| \partial^\alpha_x g \|^2_{L^2_{x,v}(\nu)} \Big)^\frac{1}{2} \,, \ \| g \|_{H^N_{x,v}} = \Big( \sum_{|\alpha| + |\beta| \leq N} \| \partial^\alpha_x \partial^\beta_v g \|^2_{L^2_{x,v}} \Big)^\frac{1}{2} \,, \\
    & \| g \|_{H^N_{x,v}(\nu)} = \Big( \sum_{|\alpha| + |\beta| \leq N} \| \partial^\alpha_x \partial^\beta_v g \|^2_{L^2_{x,v}(\nu)} \Big)^\frac{1}{2} \,.
  \end{aligned}
\end{equation*}

In order to easily state our main results, we use a capital symbol $G$ denote by a column vector in $\mathbb{R}^{2}$ so that $G=(f,g)^{T}$ and the projection operator $\mathbb{P}=(P_{1},P_{2})^{T}, \mathbb{I}-\mathbb{P}=(I-P_{1},I-P_{2})^{T}$. For instance, in this paper, we use
$G_{0}=(f_{0},g_{0})^{T}$,  $G^{n}_{\varepsilon}=(f^{n}_{\varepsilon},g^{n}_{\varepsilon})^{T}$,$\mathbb{P}G=(P_{1}f,P_{2}g)^{T}$, $\|\mathbb{P}G\|_{L_{x,v}^{2}}=\|P_{1}f\|_{L^{2}_{x,v}}^{2}+\|P_{2}g\|_{L^{2}_{x,v}}^{2}$ and $\|\nabla_{x}\mathbb{P}G\|_{L^{2}_{x,v}}^{2}=\|\nabla_{x}P_{1}f\|_{L^{2}_{x,v}}^{2}+\|\nabla_{x}P_{2}g\|_{L^{2}_{x,v}}^{2}$.

Then, we introduce the following energy functional
\begin{equation}\label{Energy-E}
  \begin{aligned}
    \mathcal{E}_N (G, \phi ) = \| G\|^2_{H^{N}_{x} L^2_{v}} +\| \nabla_x \phi \|^2_{H^N_x} +\| (\mathbb{I} - \mathbb{P})G \|^2_{H^N_{x,v}},
  \end{aligned}
\end{equation}
and the energy dissipative rate functional
\begin{equation}\label{Energy-D}
  \begin{aligned}
    \mathcal{D}_{N,\varepsilon} (G) &= \frac{1}{\varepsilon^2}\| (\mathbb{I}-\mathbb{P})G \|^2_{H^N_{x,v}(\nu)}+ \|\nabla_{x}\mathbb{P}G\|^2_{H^{N-1}_x L^2_v} .
  \end{aligned}
\end{equation}

In our paper, we prove two main theorems. The first theorem is give a global-in-time solution $(f_{\varepsilon},g_{\varepsilon}, \phi_{\varepsilon})$ of the two-species Vlasov-Poisson-Boltzmann system for any given the Knudsen number $\varepsilon\in(0,1]$ near global equilibrium and the second theorem is on the two-fluid incompressible Navier-Stoker-Fourier-Poisson system limit $\varepsilon\rightarrow 0$ take in the solutions $f_{\varepsilon}, g_{\varepsilon}$ of the VPB system \eqref{VPB-g}-\eqref{f's intial} which is constructed in the first theorem.
\begin{theorem}\label{Thm-global}
Assume integer $N \geq 4$, $ 0 < \varepsilon \leq1 $, for any given $\varepsilon$, we assume $(g_{\varepsilon,0}^{+}, g_{\varepsilon,0}^{-})\in H^{N}_{x,v},\phi_{\varepsilon,0}\in H^{N}_{x}$ and satisfying \eqref{f's intial}. Then  there exists $\tau>0$, independent of $\varepsilon$, such that if $\mathcal{E}_N (G_{\varepsilon,0}, \phi_{\varepsilon, 0}) \leq \tau$, then the Cauchy problem of \eqref{VPB-g}-\eqref{f's intial} admits a global solution $(g_{\varepsilon}^{+}, g_{\varepsilon}^{-}, \phi_{\varepsilon} )$ satisfying $(g_{\varepsilon}^{+}, g_{\varepsilon}^{-})\in L^{\infty}(\R^{+};H^{N}_{x,v})$, $\nabla_{x} \phi_{\varepsilon} \in L^{\infty}(\R^{+};H^{N}_{x})$ and the global estimate
	\begin{equation}\label{Uniform-Global-Bnd}
	  \begin{aligned}
	    \sup_{t \geq 0} \mathcal{E}_N (G_{\varepsilon}(t), \phi_\varepsilon (t)) + c_0 \int_0^\infty \mathcal{D}_{N, \varepsilon} (G_{\varepsilon}(t)) \d t \lesssim \mathcal{E}_N (G_{\varepsilon,0}, \phi_{\varepsilon, 0} ) \,,
	  \end{aligned}
	\end{equation}
	where $c_0 > 0$ is independent of $\varepsilon$. Moreover, $\phi_\varepsilon (t,x)$ and
	\begin{equation*}
    f^{\pm}_{\varepsilon}(t,x,v) = M (v) ( 1 + \varepsilon g^{\pm}_\varepsilon (t,x,v))\geq0
	\end{equation*}
	obeys the Cauchy problem \eqref{VPB-g}-\eqref{f's intial}.
\end{theorem}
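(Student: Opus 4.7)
The plan is to combine standard local well-posedness (obtained by a linearization and fixed-point iteration inside the same energy norm $\mathcal{E}_N$) with uniform-in-$\varepsilon$ a priori estimates, and to close the argument by a bootstrap/continuity argument on the smallness of $\mathcal{E}_N$. The analytical heart is an extension of Guo's macro--micro energy method to the two-species VPB system with strong interspecies coupling and self-consistent Poisson equation, where the key new difficulty is that the $g_\varepsilon$-equation contains the order-one, non-dissipative drift $-2v\cdot\nabla_x\phi_\varepsilon$.

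For the a priori estimate, I would first derive pure spatial-derivative energy estimates: for each multi-index $|\alpha|\le N$, apply $\partial^\alpha_x$ to the $f_\varepsilon$- and $g_\varepsilon$-equations and pair in $L^2_{x,v}$ with $\partial^\alpha_x f_\varepsilon$ and $\partial^\alpha_x g_\varepsilon$ respectively. The transport $v\cdot\nabla_x$ is skew-symmetric and drops out; the dissipation operator $\tfrac{1}{\varepsilon}\mathcal{L}_i$ yields, via the spectral gap $\langle\mathcal{L}_i h,h\rangle_{L^2_v}\gtrsim\|(\mathbb{I}-\mathbb{P})_i h\|_{L^2_v(\nu)}^2$, the microscopic dissipation $\tfrac{1}{\varepsilon^2}\|(\mathbb{I}-\mathbb{P})G\|^2_{H^N_x L^2_v(\nu)}$. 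The cubic drift terms $\varepsilon\nabla_v f_\varepsilon\cdot\nabla_x\phi_\varepsilon$ and $\varepsilon g_\varepsilon v\cdot\nabla_x\phi_\varepsilon$, together with the quadratic $Q(G,G)$ contributions, are estimated by $C\sqrt{\mathcal{E}_N}\,\mathcal{D}_{N,\varepsilon}$ using the Sobolev embedding $H^N_x\hookrightarrow L^\infty_x$ ($N\ge 4$) and the standard bilinear collision bound $\|Q(F,G)\|_{L^2_v(\nu^{-1})}\lesssim\|F\|_{L^2_v(\nu)}\|G\|_{L^\infty_{x,v}}+\|G\|_{L^2_v(\nu)}\|F\|_{L^\infty_{x,v}}$.

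Next, to close $\|(\mathbb{I}-\mathbb{P})G\|_{H^N_{x,v}(\nu)}$ I would perform mixed spatial--velocity estimates: applying $\partial^\alpha_x\partial^\beta_v$ to the microscopic projections of the equations, pairing with $\partial^\alpha_x\partial^\beta_v(\mathbb{I}-\mathbb{P})G$, and inducting on $|\beta|$ to absorb the commutator $[\partial^\beta_v,v\cdot\nabla_x]$ into dissipation at order $|\beta|-1$. The macroscopic dissipation $\|\nabla_x\mathbb{P}G\|^2_{H^{N-1}_x L^2_v}$ is recovered from the local conservation laws satisfied by $(a_\varepsilon,b_\varepsilon,c_\varepsilon,d_\varepsilon)$: velocity moments of the two equations produce an approximate fluid system driven by $\nabla_x\phi_\varepsilon$ and by microscopic remainders, and an ad hoc interaction functional in the spirit of Guo and Duan--Strain (built from carefully chosen test functions testing derivatives of the macroscopic fields against moments of $(\mathbb{I}-\mathbb{P})G$) delivers $\|\nabla_x\mathbb{P}G\|^2_{H^{N-1}_x L^2_v}$ on the left, at the cost of terms absorbed by the microscopic dissipation and by $\sqrt{\mathcal{E}_N}\mathcal{D}_{N,\varepsilon}$ on the right.

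The main obstacle is the drift $-2v\cdot\nabla_x\phi_\varepsilon$ in the $g_\varepsilon$-equation: it carries no factor of $\varepsilon$ and is not damped by $\tfrac{1}{\varepsilon}\mathcal{L}_2$ because $v\notin\mathcal{N}_2=\mathrm{Span}\{1\}$. The saving cancellation relies on the Poisson constraint $\Delta_x\phi_\varepsilon=d_\varepsilon=P_2 g_\varepsilon$: pairing this drift against $\partial^\alpha_x g_\varepsilon$ produces $-2\int\nabla_x\partial^\alpha_x\phi_\varepsilon\cdot\partial^\alpha_x J_\varepsilon\,\d x$ with $J_\varepsilon=\int v g_\varepsilon M\,\d v$; integrating the $v$-moment of the $g_\varepsilon$-equation against $M$ yields $\nabla_x\cdot J_\varepsilon=-\varepsilon\Delta_x\partial_t\phi_\varepsilon+\varepsilon R_\varepsilon$ with a controllable remainder, and integration by parts in $x$ converts the offending term into $\varepsilon\,\partial_t\|\nabla_x\partial^\alpha_x\phi_\varepsilon\|_{L^2_x}^2$---precisely the electric contribution to $\mathcal{E}_N$. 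This structural identity is the discrete analogue of the Ohm's-law relation that emerges in the formal limit. Assembling all estimates yields
\begin{equation*}
\tfrac{\d}{\d t}\mathcal{E}_N(G_\varepsilon,\phi_\varepsilon)+c_0\,\mathcal{D}_{N,\varepsilon}(G_\varepsilon)\le C\sqrt{\mathcal{E}_N(G_\varepsilon,\phi_\varepsilon)}\,\mathcal{D}_{N,\varepsilon}(G_\varepsilon),
\end{equation*}
so that for $\mathcal{E}_N(G_{\varepsilon,0},\phi_{\varepsilon,0})\le\tau$ sufficiently small a standard continuity argument gives \eqref{Uniform-Global-Bnd}. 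Positivity $f^\pm_\varepsilon=M(1+\varepsilon g^\pm_\varepsilon)\ge 0$ follows from $\varepsilon\|g^\pm_\varepsilon\|_{L^\infty_{x,v}}\lesssim\varepsilon\sqrt{\mathcal{E}_N}\le\varepsilon\sqrt{\tau}<1$ via $H^N_{x,v}\hookrightarrow L^\infty_{x,v}$ for $N\ge 4$.
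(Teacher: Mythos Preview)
Your overall strategy is essentially the paper's: local existence by iteration, then the three-tier energy method (pure spatial derivatives for the kinetic dissipation, a thirteen-moment/interactive-energy functional for the macroscopic dissipation $\|\nabla_x\mathbb{P}G\|^2_{H^{N-1}_x}$, and an induction on $|\beta|$ for the mixed derivatives), closed by a continuity argument. Your identification of the key structural cancellation---that pairing $-\tfrac{2}{\varepsilon}v\cdot\nabla_x\phi_\varepsilon$ against $g_\varepsilon$ and using the zeroth moment $\varepsilon\partial_t d_\varepsilon+\nabla_x\cdot\langle g_\varepsilon,v\rangle_{L^2_v}=0$ together with $\Delta_x\phi_\varepsilon=d_\varepsilon$ turns the singular drift into $\tfrac{d}{dt}\|\nabla_x\partial^\alpha_x\phi_\varepsilon\|^2_{L^2_x}$---is exactly what the paper does (see the discussion around equation~(3.1) and Lemma~3.1). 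Minor remark: it is the \emph{zeroth} moment of the $g_\varepsilon$-equation, not the ``$v$-moment,'' that yields $\nabla_x\cdot J_\varepsilon=-\varepsilon\partial_t\Delta_x\phi_\varepsilon$, and there is no remainder $R_\varepsilon$ in that identity.

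There is, however, a genuine gap in your positivity argument. You invoke $H^N_{x,v}\hookrightarrow L^\infty_{x,v}$ for $N\ge 4$ to conclude $\varepsilon\|g^\pm_\varepsilon\|_{L^\infty_{x,v}}<1$, but the paper's $H^N_{x,v}$ norm is defined with the Gaussian weight $M(v)\,dv\,dx$, and the standard Sobolev embedding into $L^\infty$ does \emph{not} hold for such weighted spaces: a function can lie in $L^2_v(M\,dv)$ while growing like $e^{|v|^2/8}$ at infinity. Control of $\|g^\pm_\varepsilon\|_{H^N_{x,v}}$ therefore does not give a pointwise bound on $g^\pm_\varepsilon$, so $1+\varepsilon g^\pm_\varepsilon\ge 0$ cannot be deduced this way. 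The paper instead propagates positivity through the linear iteration scheme (Lemma~2.1): each iterate $f^{n+1,\pm}_\varepsilon$ solves a linear transport-collision equation with nonnegative initial data $f^{0,\pm}_\varepsilon=M(1+\varepsilon g^{\pm}_{\varepsilon,0})\ge 0$, and positivity is preserved by induction on $n$, then passed to the limit. You should replace your embedding argument with this iteration-based positivity, or alternatively work in a polynomially weighted $L^\infty$ framework where the embedding is valid.
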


The next theorem is about the limit to the two-fluid incompressible NSFP system with Ohm's law. This system is a
macroscopic description of a fluid based on its fluctuations of mass density $\rho(t,x)$, bulk velocity $u(t,x)$, temperature $\theta(t,x)$, the electric charge $n(t,x)$, the electric current $j(t,x)$ and the internal electric energy $w(t,x)$ in the electric filed $\nabla_{x}\phi(t,x)$
\begin{equation}\label{NSFP}
  \begin{cases}
    & \partial_t u + u \cdot \nabla_x u + \nabla_{x} p = \mu \Delta_x u +\frac{1}{2}n\nabla_{x}\phi \,,\\
    & \partial_t \theta+ u \cdot \nabla_x \theta= \kappa \Delta_x \theta \,,\\
    &\partial_{t}n+u\cdot\nabla_{x}n+\sigma n=\frac{\sigma}{2}\Delta_{x}n\,,\\
    &j=nu+\sigma(\nabla_{x}\phi-\frac{1}{2}\nabla_{x}n)\,,\\
    & \Delta_x\phi =n  \,,w=n\theta\,, \rho+\theta=0\,, \text{div}_{x}u= 0,
  \end{cases}
\end{equation}
where $p(t,x)$ is the pressure, the viscosity $\mu>0$, the heat conductivity $\kappa>0$ and the electrical conductivity $\sigma>0$ are defined as
\begin{equation}\label{mu-kappa}
  \mu = \frac{1}{15} \l A_{ij}, \widehat{A}_{ij} \rangle_{L^2_v} \,, \quad \kappa = \frac{2}{15}
\l B_i,\widehat{B}_i \rangle_{L^2_v} \,,
\end{equation}
\begin{equation}\label{sigma-lambda}
\frac{1}{\sigma}=\frac{1}{2}\int_{\mathbb{R}^{3}}v\cdot\mathcal{L}_{1}(v,-v)Mdv\,,\quad
\end{equation}
\begin{equation*}
  A(v) = v \otimes v -\frac{|v|^2}{3}I,\quad\quad B(v) = (\frac{|v|^2}{2} - \frac{5}{2})v.
\end{equation*}

For the qualities $A(v), B(v)\in\mathcal{N}^{\bot}\mathcal{L}_{1}$ and there exist unique $\widehat{A},\widehat{B}\in L^2(Mdv)$ such that(see \cite{BGL1})
\begin{equation*}
  A  =   \mathcal{L}_{1} \widehat{A} \,\quad \text{and} \quad B =  \mathcal{L}_{1} \widehat{B} \,.
\end{equation*}

\begin{theorem}\label{Thm-Limit}
	Under the same assumptions as in the Theorem \ref{Thm-global}, suppose $ 0 <\varepsilon\leq 1$, $N \geq 4$ and $\tau > 0$ be mentioned in the Theorem \ref{Thm-global}. Furthermore, we assume that there exist functions $\nabla_{x}\phi_{0}(x), u_{0}(x), \theta_{0}(x)\in H^N_{x}$ and
\begin{equation}\no
\begin{aligned}
\rho(0,x)&=\rho_{0}(x)=-\theta_{0}(x), n(0,x)=n_{0}(x)=\Delta_{x}\phi_{0}(x), w(0,x)=w_{0}(x)=\Delta_{x}\phi_{0}(x)\theta_{0}(x),\\ j(0,x)&=j_{0}(x)=\Delta_{x}\phi_{0}(x)\mathcal{P}u_{0}(x)+\sigma(\nabla_{x}\phi_{0}(x)-\frac{1}{2}\nabla_{x}\Delta_{x}\phi_{0}(x)),
\end{aligned}
\end{equation}
satisfying
	\begin{equation}
	  \begin{aligned}
	    \frac{g_{\varepsilon,0}^{+}(x,v)+g_{\varepsilon,0}^{-}(x,v)}{2} \rightarrow \frac{g_{0}^{+}(x,v)+g_{0}^{-}(x,v)}{2} = \rho_{0} (x) + u_{0} (x) \cdot v + \theta_{0}(x) ( \frac{|v|^2}{2} - \frac{3}{2} ),
	  \end{aligned}
	\end{equation}
	strongly in $H^N_{x,v}$ as $\varepsilon \rightarrow 0$, where P is the Leray projection on $\R^{3}$. Moreover, we assume that
\begin{equation}
\begin{aligned}
&\l g_{\varepsilon,0}(x,v)-g_{\varepsilon,0}(x,v), 1\r_{L^{2}_{v}}\rightarrow n_{0}(x)=\Delta_{x}\phi_{0}(x),\\
&\frac{1}{\varepsilon}\l g_{\varepsilon,0}(x,v)-g_{\varepsilon,0}(x,v), v\r_{L^{2}_{v}}\rightarrow j_{0}(x),\\
&\frac{1}{\varepsilon}\l g_{\varepsilon,0}(x,v)-g_{\varepsilon,0}(x,v), \frac{|v|^{2}}{3}-1\r_{L^{2}_{v}}\rightarrow w_{0}(x),\\
\end{aligned}
\end{equation}
strongly in $H^N_{x}$ as $\varepsilon \rightarrow 0$.
Let $( g_{\varepsilon}^{+} (t,x,v), g_{\varepsilon}^{-} (t,x,v), \phi_{\varepsilon} (t,x) )$ be the family of solutions to the two-species VPB systems \eqref{VPB-g} which constructed in Theorem \ref{Thm-global}. Then,
	\begin{equation}
	  \frac{g_{\varepsilon}^{+}(t,x,v)+g_{\varepsilon}^{-}(t,x,v)}{2} \rightarrow \rho (t,x) + u(t,x) \cdot v + \theta (t,x) ( \frac{|v|^2}{2} - \frac{3}{2} ),
	\end{equation}
	weakly-$\star$ for $t \geq 0$, strongly in $H^{N-1}_{x,v} $ as $\varepsilon \rightarrow 0$. Moreover,
\begin{equation}
\begin{aligned}
&\l g_{\varepsilon}(t,x,v)-g_{\varepsilon}(t,x,v), 1\r_{L^{2}_{v}}\rightarrow n(t,x)=\Delta_{x}\phi(t,x),\\
&\frac{1}{\varepsilon}\l g_{\varepsilon}(t,x,v)-g_{\varepsilon}(t,x,v), v\r_{L^{2}_{v}}\rightarrow j(t,x),\\
&\frac{1}{\varepsilon}\l g_{\varepsilon}(t,x,v)-g_{\varepsilon}(t,x,v), \frac{|v|^{2}}{3}-1\r_{L^{2}_{v}}\rightarrow w(t,x),\\
\end{aligned}
\end{equation}
weakly-$\star$ for $t \geq 0$, strongly in $H^{N-1}_{x}$ as $\varepsilon \rightarrow 0$. Here $ (\rho, u, \theta, n, j, w, \nabla_{x}\phi) \in L^\infty (\R^+; H^N_x) $ with $ (\rho, u, \theta, n, j, w, \nabla_{x}\phi) \in C(\R^+; H^{N-1}_x) $ is the solution of the two-fluid incompressible NSFP system \eqref{NSFP} with Ohm's law with initial data:
\begin{equation}\no
\begin{aligned}
\rho(0,x)&=\rho_{0}(x)=-\theta_{0}(x), n(0,x)=n_{0}(x)=\Delta_{x}\phi_{0}(x), w(0,x)=w_{0}(x)=\Delta_{x}\phi_{0}(x)\theta_{0}(x),\\
j(0,x)&=j_{0}(x)=\Delta_{x}\phi_{0}(x)\mathcal{P}u_{0}(x)+\sigma(\nabla_{x}\phi_{0}(x)-\frac{1}{2}\nabla_{x}\Delta_{x}\phi_{0}(x)),\\
u(0,x)&=\mathcal{P}u_{0}(x), \theta(0,x)=\theta_{0}(x), \phi(0,x)=\phi_{0}(x).
\end{aligned}
\end{equation}
Moreover, the following convergence
\begin{equation}\no
\begin{aligned}
&\l\frac{g_{\varepsilon}^{+}(t,x,v)+g_{\varepsilon}^{-}(t,x,v)}{2}, 1\r_{L^{2}_{v}}\rightarrow \rho(t,x),\\
&\mathcal{P}\l\frac{g_{\varepsilon}^{+}(t,x,v)+g_{\varepsilon}^{-}(t,x,v)}{2}, v\r_{L^{2}_{v}}\rightarrow u(t,x),\\
&\l\frac{g_{\varepsilon}^{+}(t,x,v)+g_{\varepsilon}^{-}(t,x,v)}{2}, \frac{|v|^{2}}{3}-1\r_{L^{2}_{v}}\rightarrow \theta(t,x),\\
&\l g_{\varepsilon}^{+}(t,x,v)-g_{\varepsilon}^{-}(t,x,v), 1\r_{L^{2}_{v}}\rightarrow n(t,x),\\
\end{aligned}
\end{equation}
strongly in $C(\R^{+};H^{N-1}_{x})$, weakly-$\star$ in $t\geq0$, holds.
\end{theorem}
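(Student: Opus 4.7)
The plan is to pass to the limit by combining the uniform energy control \eqref{Uniform-Global-Bnd} with a Chapman--Enskog-style moment analysis, and then to upgrade subsequential weak limits to full strong convergence via uniqueness of the limit system. \emph{Weak limits, kernel identification, and the fluid block.} Theorem \ref{Thm-global} gives $f_\varepsilon, g_\varepsilon$ uniformly bounded in $L^\infty(\R^+; H^N_{x,v})$ and $\nabla_x\phi_\varepsilon$ in $L^\infty(\R^+; H^N_x)$, so Banach--Alaoglu yields weak-$\star$ limits $(f, g, \nabla_x\phi)$ along some subsequence. The dissipation estimate in \eqref{Uniform-Global-Bnd} gives $\|(\mathbb{I}-\mathbb{P})G_\varepsilon\|_{L^2_t H^N_{x,v}(\nu)} = O(\varepsilon)$, which together with \eqref{Coll-I} forces
\begin{equation*}
\tfrac{1}{2}f(t,x,v) = \rho(t,x) + u(t,x)\cdot v + \theta(t,x)\bigl(\tfrac{|v|^2}{2}-\tfrac{3}{2}\bigr), \qquad g(t,x,v) = n(t,x),
\end{equation*}
and $n = \Delta_x\phi$ is immediate from the Poisson equation. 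I would then test the $f_\varepsilon$-equation against the invariants $\{1, v, \tfrac{1}{2}|v|^2\}$: the $O(\varepsilon^{-1})$ contributions vanish and yield the constraints $\operatorname{div}_x u = 0$ and $\rho+\theta=0$; the $O(1)$ contributions, combined with the Chapman--Enskog substitution
\begin{equation*}
\tfrac{1}{\varepsilon}(I-P_1)f_\varepsilon \ \longrightarrow \ -\bigl(\widehat{A}:\nabla_x u + \widehat{B}\cdot\nabla_x\theta\bigr),
\end{equation*}
produce the incompressible Navier--Stokes--Fourier block with coefficients $\mu,\kappa$ as in \eqref{mu-kappa}. The Lorentz-type force $\tfrac12 n\nabla_x\phi$ in the $u$-equation arises from the limit of the cross contributions $-\varepsilon g_\varepsilon v\cdot\nabla_x\phi_\varepsilon+\varepsilon\nabla_v f_\varepsilon\cdot\nabla_x\phi_\varepsilon$ after pairing with $v$ and using strong local convergence of $g_\varepsilon\to n$ and $\nabla_x\phi_\varepsilon\to\nabla_x\phi$.

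\emph{Electric block and Ohm's law.} Pairing the $g_\varepsilon$-equation against $1$, $v$, and $\tfrac{|v|^2}{3}-1$ produces the electric dynamics. The scalar moment yields, after using Poisson and the strong convergence of $f_\varepsilon$, the advection--damping--diffusion equation for $n$, with the damping coefficient $\sigma$ appearing through the definition \eqref{sigma-lambda} by analyzing the balance of $-2v\cdot\nabla_x\phi_\varepsilon$ against $\tfrac{1}{\varepsilon}\mathcal{L}_2 g_\varepsilon$. The vector moment, divided by $\varepsilon$, produces Ohm's law $j=nu+\sigma(\nabla_x\phi-\tfrac12\nabla_x n)$: the term $nu$ is the advective current from $v\cdot\nabla_x g_\varepsilon$, while $\sigma\nabla_x\phi$ and $-\tfrac{\sigma}{2}\nabla_x n$ arise from inverting the singular relation $\mathcal{L}_2(\cdot)=\varepsilon(\text{forcing})$ together with \eqref{sigma-lambda}. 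The energy moment yields $w=n\theta$ analogously. This step is the main obstacle: controlling the singular $1/\varepsilon$ factors multiplying the cross electromagnetic--kinetic terms and genuinely identifying the weak-$\star$ limits of $\tfrac{1}{\varepsilon}\langle g_\varepsilon,v\rangle_{L^2_v}$ and $\tfrac{1}{\varepsilon}\langle g_\varepsilon,\tfrac{|v|^2}{3}-1\rangle_{L^2_v}$ as $H^{N-1}_x$ functions requires careful exploitation of the coercivity of $\mathcal{L}_2$ on $\mathcal{N}_2^\perp$ combined with the explicit decomposition \eqref{Decom}.

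\emph{Strong convergence and uniqueness.} The uniform spatial $H^N_x$ bound together with equation-derived time-equicontinuity of the moments in a weaker negative-order norm gives, by an Aubin--Lions argument, strong compactness in $C_{\mathrm{loc}}(\R^+; H^{N-1}_x)$. Since the classical $H^N$-solution of the limit system \eqref{NSFP} with the prescribed initial data is unique, the whole family (not merely a subsequence) converges, and the strong $H^{N-1}_{x,v}$ convergence of $(g_\varepsilon^++g_\varepsilon^-)/2$ then follows by combining the strong convergence of its moments with the $O(\varepsilon)$ control of $(\mathbb{I}-\mathbb{P})G_\varepsilon$ in $L^2_t H^N_{x,v}(\nu)$.
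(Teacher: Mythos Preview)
Your plan is essentially the paper's proof (Section~\ref{Sec: Limits}): weak-$\star$ compactness from the uniform bound \eqref{Uniform-Global-Bnd}, kernel identification from the $O(\varepsilon)$ dissipation \eqref{Unf-Bnd-2}, local conservation laws \eqref{Consevtn-Law-rho-u-theta-phi}--\eqref{Consevtn-Law-n-j-w} together with the Chapman--Enskog identities \eqref{A-Dissipation}--\eqref{B-Dissipation}, and Aubin--Lions (Lemma~\ref{Lmm-Aubin-Lions-Simon}) applied to $\mathcal{P}u_\varepsilon$, $\tfrac{3}{2}\theta_\varepsilon-\rho_\varepsilon$, $n_\varepsilon$, $\nabla_x\phi_\varepsilon$ to pass to the limit in the nonlinear terms.

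Two small corrections on the electric block. First, the paper does \emph{not} pair the $g_\varepsilon$-equation with $v$ and $\tfrac{|v|^2}{3}-1$ but with their $\mathcal{L}_2$-preimages $\hat\Phi,\hat\Psi$ from \eqref{hat{PP}} and Lemma~\ref{a-b}; this is the precise implementation of your ``inverting $\mathcal{L}_2$'' and is what converts the singular contribution $\tfrac{1}{\varepsilon}\langle\mathcal{L}_2 g_\varepsilon,\hat\Phi\rangle_{L^2_v}=\tfrac{1}{\varepsilon}\langle g_\varepsilon,v\rangle_{L^2_v}$ directly into the bounded quantity $j_\varepsilon$ (and likewise for $w_\varepsilon$), rather than leaving a $\tfrac{1}{\varepsilon}\langle (I-P_2)g_\varepsilon,\mathcal{L}_2 v\rangle_{L^2_v}$ whose limit you would still have to identify. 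Second, the advective current $nu$ in Ohm's law arises from $\langle Q(g_\varepsilon,f_\varepsilon),\hat\Phi\rangle_{L^2_v}$, not from the transport term $v\cdot\nabla_x g_\varepsilon$ (the latter produces the $-\tfrac{\sigma}{2}\nabla_x n$ piece); and the damped--diffusive $n$-equation in \eqref{NSFP} is obtained only \emph{a posteriori} by substituting Ohm's law into the exact conservation law $\partial_t n_\varepsilon+\nabla_x\cdot j_\varepsilon=0$, not directly from the scalar moment as you wrote.
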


\subsection{Difficulties and ideas}
In our paper, the key point is to deduce a uniform energy bound in $\varepsilon \in (0,1]$ of the global-in-time solutions to the two-species perturbed VPB system \eqref{VPB-g} near equilibrium.  We introduce two functions $f_{\varepsilon}(t,x,v)=g^{+}_{\varepsilon}(t,x,v)+g_{\varepsilon}^{-}(t,x,v),g_{\varepsilon}(t,x,v)=g^{+}_{\varepsilon}(t,x,v)-g_{\varepsilon}^{-}(t,x,v)$ according the formal analysis of the two-species VPB system to the NSFP system with Ohm's law. Therefore, the two kinetic functions $f_{\varepsilon}, g_{\varepsilon}$ can be divided into two different parts of macroscopic part and microscopic part, respectively
\begin{equation*}
\begin{aligned}
f_{\varepsilon}&= P_{1}f_{\varepsilon}+ (I-P_{1})f_{\varepsilon},g_{\varepsilon}=P_{2}g_{\varepsilon}+(I-P_{2})g_{\varepsilon},
\end{aligned}
\end{equation*}
where $P_{1}f_\varepsilon \in \mathcal{N}_{1}$  and $P_{2}g_{\varepsilon}\in\mathcal{N}_{2}$ are so-called the fluid  part of $f_{\varepsilon},g_{\varepsilon} $, and $(I-P_{1}) f_\varepsilon\in \mathcal{N}_{1}^\perp, (I-P_{2}) g_\varepsilon\in \mathcal{N}_{2}^\perp$ are called the kinetic part of $f_{\varepsilon}, g_{\varepsilon}$, where $\mathcal{N}_{i}^\perp$ is the orthogonal space of $\mathcal{N}_{i} (i=1,2)$ in $L^{2}_{v}$, respectively. Plugging the above decomposition into the scaled equation \eqref{VPB-g}, we obtain
\begin{equation}\label{decomposition1}
  \begin{split}
    \varepsilon \partial_{t} ( a_{\varepsilon}+ b_{\varepsilon} \cdot v + c_{\varepsilon}|v|^2 ) + v \cdot \nabla_x ( a_{\varepsilon}+ b_{\varepsilon}\cdot v + c_{\varepsilon}|v|^2 ) = l + h + m \,,
  \end{split}
\end{equation}
\begin{equation}\label{decomposition2}
\begin{aligned}
\varepsilon\partial_{t}d_{\varepsilon}+v\cdot\nabla_{x}d_{\varepsilon}=\hat{l}+\hat{h}+\hat{m},
\end{aligned}
\end{equation}
where
\begin{equation}\label{l+h+m}
  \begin{aligned}
    l = & - \varepsilon \partial_{t} (I-P_{1}) f_{\varepsilon} - v \cdot \nabla_{x} (I-P_{1}) f_{\varepsilon}- \frac{1}{\varepsilon} \mathcal{L}_{1} (I-P_{1}) f_{\varepsilon}\,, \\
    h = & Q (f_{\varepsilon}, f_{\varepsilon}) \,, \\
    m = &\varepsilon (g_{\varepsilon}v \cdot \nabla_{x} \phi_{\varepsilon} - \nabla_{x} \phi_{\varepsilon}\cdot \nabla_{v} g_{\varepsilon}) \,,\\
    \hat{l}=&-\varepsilon\partial_{t}(I-P_{2})g_{\varepsilon}-v\cdot\nabla_{x}(I-P_{2})g_{\varepsilon}-\frac{1}{\varepsilon}\mathcal{L}_{2}(I-P_{2})g_{\varepsilon},\\
    \hat{h}=&Q(g_{\varepsilon},f_{\varepsilon}),\\
    \hat{m}=&2v\cdot\nabla_{x}\phi+\varepsilon f_{\varepsilon}v\cdot\nabla_{x}\phi_{\varepsilon}-\varepsilon\nabla_{v}f_{\varepsilon}\cdot\nabla_{x}\phi_{\varepsilon}.
  \end{aligned}
\end{equation}

Furthermore, $a_{\varepsilon}$, $b_{\varepsilon}$, $c_{\varepsilon}$ and $d_{\varepsilon}$ follow the local macroscopic balance laws of mass, moment, energy from $f_{\varepsilon}$ and balance law of mass from $g_{\varepsilon}$. In fact, we multiply the first $f_{\varepsilon},g_{\varepsilon}$-equation \eqref{VPB-g} of the VPB system by the collision invariant in \eqref{Coll-I} and integrate by parts over $v \in \mathbb{R}^3$, then
\begin{equation*}
\left\{
\begin{aligned}
  & \partial_{t} \int_{\mathbb{R}^3} f_{\varepsilon}Md v + \frac{1}{\varepsilon}\nabla_{x}\cdot \int_{\mathbb{R}^3} v f_{\varepsilon}M d v=0 \,, \\
  & \partial_{t} \int_{\mathbb{R}^3} f_{\varepsilon}v M d v + \frac{1}{\varepsilon}\nabla_{x}\cdot \int_{\mathbb{R}^3} f_{\varepsilon}v \otimes v M d v-\nabla_{x}\phi_{\varepsilon}\int_{\mathbb{R}^{3}}g_{\varepsilon}Md v=0 \,, \\
  & \partial_{t} \int_{\mathbb{R}^3} f_{\varepsilon}|v|^2 d v + \frac{1}{\varepsilon}\nabla_{x}\cdot \int_{\mathbb{R}^3} f_{\varepsilon}|v|^{2}v d v-2\nabla_{x}\phi_{\varepsilon}\int_{\mathbb{R}^{3}}g_{\varepsilon}v Md v=0\,,\\
  &\partial_{t}\int_{\mathbb{R}^{3}}g_{\varepsilon}Md v+\frac{1}{\varepsilon}\int_{\mathbb{R}^{3}}v\cdot\nabla_{x}g_{\varepsilon}Md v=0.
\end{aligned}
\right.
\end{equation*}

We obtain the macroscopic balance laws by the perturbed form $f_{\varepsilon}^{+}+f_{\varepsilon}^{-}= M ( 2 + \varepsilon f_{\varepsilon} ), f_{\varepsilon}^{+}-f_{\varepsilon}^{-}=\varepsilon M g_{\varepsilon}$ and the decomposition \eqref{Decom}
\begin{equation}\label{a-b-c}
\left\{
\begin{aligned}
&\partial_{t} (a_{\varepsilon}+ 3 c_{\varepsilon} ) + \frac{1}{\varepsilon} \nabla_{x}\cdot b_\varepsilon= 0  \\
& \partial_{t} b_{\varepsilon}+ \frac{1}{\varepsilon} \nabla_{x} (a_{\varepsilon} + 5 c_{\varepsilon}) + \frac{1}{\varepsilon} \l v \cdot \nabla_x (I-P_{1}) f_{\varepsilon} , v \rangle_{L^2_v}-d_{\varepsilon}\nabla_{x}\phi_{\varepsilon}=0,\\
&\partial_{t}c_{\varepsilon}+\frac{1}{3\varepsilon}\nabla_{x}\cdot b+\frac{1}{6\varepsilon}\l v\cdot\nabla_{x}(I-P_{1})f_{\varepsilon},|v|^{2}\rangle_{L^{2}_{v}}-\frac{1}{3}\nabla_{x}\phi_{\varepsilon}\cdot\l(I-P_{2})g_{\varepsilon},v\rangle_{L^{2}_{v}}=0,\\
&\partial_{t}d_{\varepsilon}+\frac{1}{\varepsilon}\l v\cdot\nabla_{x}(I-P_{2})g_{\varepsilon},1\rangle_{L^{2}_{v}}=0,\\
&\Delta_x \phi_\varepsilon =d_{\varepsilon}(t,x).
\end{aligned}
\right.
\end{equation}

We will divide into three steps to derive the uniform global energy estimates of the perturbed VPB system \eqref{VPB-g} :

Step one: to deduce the pure spatial derivative energy estimates. We will obtain the kinetic dissipation $\frac{1}{\varepsilon^2} \| (\mathbb{I}-\mathbb{P}) G_\varepsilon \|^2_{H^N_x L^2_v (\nu)}$ thank to the coercivity of linearize collision operator $\mathcal{L}_{i}(i=1,2)$ as shown in Lemma \ref{Lmm-Coercivity-L}. Furthermore, the singular term $\frac{1}{\varepsilon} v \cdot \nabla_{x}G_{\varepsilon}$ will disappear since $\frac{1}{\varepsilon} \l v \cdot \nabla_x \partial^\alpha_xG_\varepsilon , \partial^\alpha_x G_\varepsilon \r_{L^2_{x,v}}= 0$ for all $\alpha \in \mathbb{N}^3$.  The singularity $\frac{1}{\varepsilon} \partial^\alpha_x (\mathbb{I}-\mathbb{P}) G_\varepsilon$ will be controlled by the kinetic dissipation $\frac{1}{\varepsilon^2} \| (\mathbb{I}-\mathbb{P}) G_\varepsilon \|^2_{H^N_x L^2_v (\nu)}$. Next for the linear singular term $ \frac{1}{\varepsilon} v \cdot \nabla_x \phi_\varepsilon$, the local conservation law of mass $\partial_t \l g_\varepsilon , 1 \r_{L^2_v} + \frac{1}{\varepsilon} \l g_\varepsilon, v \r_{L^2_v} = 0$ and the Poisson equation $\Delta_x \phi_\varepsilon = \l g_{\varepsilon} , 1 \r_{L^2_v}$ will be used such that the singular quantity $\l  \frac{1}{\varepsilon} v \cdot \nabla_x \partial^\alpha_x \phi_\varepsilon, \partial^\alpha_x g_{\varepsilon} \r_{L^2_{x,v}} = \frac{1}{2} \frac{d}{d t} \| \nabla_x \partial^\alpha_x \phi_\varepsilon \|^2_{L^2_x} $. As for the nonlinear and singular term $\frac{1}{\varepsilon}Q(f_{\varepsilon},f_{\varepsilon})$, $\frac{1}{\varepsilon}Q(g_{\varepsilon},f_{\varepsilon})$, it's easy to derive that $\frac{1}{\varepsilon}\l \partial_{x}^{\alpha} Q(f_{\varepsilon},f_{\varepsilon}), \partial_{x}^{\alpha}f_{\varepsilon}\r_{L^{2}_{v}}=\frac{1}{\varepsilon}\l \partial_{x}^{\alpha} Q(f_{\varepsilon},f_{\varepsilon}), \partial_{x}^{\alpha}(I-P_{1})f_{\varepsilon}\r_{L^{2}_{v}},$
$ \frac{1}{\varepsilon}\l \partial_{x}^{\alpha} Q(g_{\varepsilon},f_{\varepsilon}), \partial_{x}^{\alpha}g_{\varepsilon}\r_{L^{2}_{v}}=\frac{1}{\varepsilon}\l \partial_{x}^{\alpha} Q(f_{\varepsilon},f_{\varepsilon}), \partial_{x}^{\alpha}(I-P_{2})g_{\varepsilon}\r_{L^{2}_{v}}$ since $Q(f_{\varepsilon},f_{\varepsilon})\in\mathcal{N}_{1}^{\perp}, Q(g_{\varepsilon},f_{\varepsilon})\in\mathcal{N}_{2}^{\perp}$. Then we use the decomposition of $G_{\varepsilon}=\mathbb{P}G_{\varepsilon}+(\mathbb{I}-\mathbb{P})G_{\varepsilon}$ and these items can be controlled by the kinetic dissipation $\frac{1}{\varepsilon^2} \| (\mathbb{I}-\mathbb{P})G_\varepsilon \|^2_{H^N_x L^2_v (\nu)}$.

Step two: to estimate the macroscopic energy to find a dissipative structure of the fluid part $\mathbb{P}G$ by employing the so-called macro-micro decomposition method, depending on the {\em thirteen moments} (see \cite{GY-06}). Therefore, the fluid dissipation $\| \nabla_x \mathbb{P}G_\varepsilon\|^2_{H^{N-1}_x L^2_v}$ can be obtained. Applying the divergence operator $\nabla_x \cdot $ on the balance law \eqref{a-b-c} for $b_\varepsilon$, a more damping effect $\| \l g_\varepsilon, 1 \r_{L^2_v} \|^2_{H^{N-1}_x} = \| a_\varepsilon+ 3 c_\varepsilon\|^2_{H^{N-1}_x}$ in the perturbed VPB system resulted from the Poisson equation $\Delta_x \phi_\varepsilon = \l g_\varepsilon, 1 \r_{L^2_v} = a_\varepsilon + 3 c_\varepsilon$ show that
\begin{equation*}
  \begin{aligned}
    - \Delta_x (a_\varepsilon + 3 c_\varepsilon)= \textrm{ other terms} \,.
  \end{aligned}
\end{equation*}

Applying the method of {\em thirteen moments}, the term $b_{\varepsilon}(t,x)$(see \eqref{3.3.5}) can be yielded
\begin{equation}\no
-\Delta_{x}b_{i,\varepsilon}-\tfrac{1}{3}\partial_{i}\nabla_{x}\cdot b_{\varepsilon}=\sum_{j=1}^{3}\l l+h+m, \zeta_{ij}\r_{L^{2}_{v}}+\textrm{ other terms},
\end{equation}
where $\zeta_{ij}$ is a certain linear combination of the basis in \eqref{e 13}. Analogously, the term $c_{\varepsilon}(t,x)$( see \eqref{3.3.12})can be yielded
\begin{equation}\no
-\Delta_{x}c_{\varepsilon}=\sum_{i=1}^{3}\l l+h+m, \zeta_{i}\r_{L^{2}_{v}},
\end{equation}
where $\zeta_{i}$ is a certain linear combination of the basis in \eqref{e 13}. Moreover, as for the term $d_{\varepsilon}$, we take inner product with $\l\cdot,v\r_{L^{2}_{v}}$ in \eqref{decomposition2} and yield that
\begin{equation}\no
\nabla_{x}d=\l\hat{l}+\hat{h}+\hat{m},v\r_{L^{2}_{v}}.
\end{equation}

Noticed, no singular terms are generated in deriving the macroscopic energy estimates. However, a unsigned {\em interactive energy} quantity $E_N^{int} (G)$ defined in \eqref{Interactive-Energy} will appear. Fortunately, the unsigned interactive energy $E_N^{int} (G)$ is so small size that it can be dominated by the energy $\mathcal{E}_N (G, \phi_\varepsilon)$ (see Remark \ref{Rmk-MM}).

Step three: to derive the $(x,v)$-mixed derivatives estimates to closed the energy inequality. The uncontrolled quantity in the spatial derivative and macroscopic energy estimates are in terms of the $v$-derivatives of the kinetic part $(\mathbb{I}-\mathbb{P}) G_{\varepsilon}$. We employ the microscopic projection $I-P_{1}$ to the $f$-equation of \eqref{VPB-g}and $I-P_{2}$ to the $g$-equation of \eqref{VPB-g}, then we have
\begin{equation}\no
  \begin{aligned}
    &\partial_t (I-P_{1})f_\varepsilon + \frac{1}{\varepsilon^2} \mathcal{L}_{1} (I-P_{1}) f_\varepsilon = \frac{1}{\varepsilon} Q (f_\varepsilon, f_\varepsilon) + \textrm{ other nonsingular terms},\\
    &\partial_t (I-P_{2})g_\varepsilon + \frac{1}{\varepsilon^2} \mathcal{L}_{2} (I-P_{2}) g_\varepsilon =\frac{1}{\varepsilon} Q (g_\varepsilon, f_\varepsilon) + \textrm{ other nonsingular terms}.\\
  \end{aligned}
\end{equation}

In the mixed derivatives about speed and spatial estimate,  we will obtain a kinetic dissipation $\frac{1}{\varepsilon^2} \| \partial^{\alpha}_{x} \partial^{\beta}_{v} (\mathbb{I}-\mathbb{P}) G_\varepsilon\|^2_{L^2_{x,v} (\nu)}$ for all $| \alpha | + |\beta| \leq N$ with $\beta \neq 0$ thank to the coercivity of $\mathcal{L}_{1},\mathcal{L}_{2}$. As a consequence, the singular term $ - \frac{1}{\varepsilon} \l \partial^\alpha_x \partial^\beta_v (I-P_{1}) (v \cdot \nabla_x f_\varepsilon) + Q (f_\varepsilon, f_\varepsilon) , \partial^\alpha_x \partial^\beta_v (I-P_{1}) f_\varepsilon\r_{L^2_{x,v}} $ and $ - \frac{1}{\varepsilon} \l \partial^\alpha_x \partial^\beta_v (I-P_{2}) (v \cdot \nabla_x g_\varepsilon) + Q (g_\varepsilon, f_\varepsilon) , \partial^\alpha_x \partial^\beta_v (I-P_{2})g_\varepsilon\r_{L^2_{x,v}} $ will be controlled by the kinetic dissipation $ \frac{1}{\varepsilon^2} \| \partial^{\alpha}_{x} \partial^{\beta}_{v} (\mathbb{I}-\mathbb{P}) G_\varepsilon\|^2_{L^2_{x,v} (\nu)}$. However, for fixed $|\alpha| + |\beta| \leq N$ with $\beta \neq 0$, the coercivity of $\mathcal{L}_{i}(i=1,2)$ under the $v$-derivatives $\partial^\beta_v$ will further generate a uncontrolled quantity $\frac{1}{\varepsilon^2} \sum_{\tilde{\beta}< \beta} \| \partial^\alpha_x \partial^{\tilde{\beta}}_v (\mathbb{I}-\mathbb{P}) G_\varepsilon\|^2_{L^2_{x,v} (\nu)}$ with two order singularity $\frac{1}{\varepsilon^2}$. Due to the order $\tilde{\beta}$ of $v$-derivatives in that quantity is strictly less than $\beta$ $(\neq 0)$, we can employ the induction to absorb this uncontrolled singular norm by the mixed derivative kinetic dissipation with lower order $v$-derivative. Therefore, we establish the global uniform energy estimates.

Finally, we take the limit from the perturbed VPB system \eqref{VPB-g} to the incompressible NSFP system with Ohm's law \eqref{NSFP} as $\varepsilon \rightarrow 0$ base on the global-in-time energy estimate uniformly in $\varepsilon \in (0,1]$. Then, we apply the Aubin-Lions-Simon Theorem to obtain enough compactness such that the limits valid.

\subsection{Historical progress in this field}
There has been tremendous progress on the well-posedness of kinetic equations. DiPerna and Lions \cite{D-L} obtained the global renormalized solutions to the Boltzmann equation for bounded initial data. Later, Lions applied this theory to the VPB system (\cite{Lions1994-1}). For the classical solutions, Ukai \cite{Ukai} first considered the hard potential collision kernels.  Guo developed nonlinear energy estimation to prove the existence of global-in-time classical solutions to the Boltzmann equations near equilibrium \cite{guo-1, Guo-2002-VPB}. Later, there were more results on different collision kernels, among which we only listed a few results on VPB \cite{D-Y-Z(hard)-2011, D-Y-Z(soft)-2011, XXZhard, XXZsoft}.

One of the most important features of kinetic equations (i.e. Boltzmann-type equation) is their connection to fluid equations in the regime where Knudsen number $\varepsilon$ is very small. Hydrodynamic limits from kinetic equations have been an active research field from late 70's.

Most results in the context of classical solutions are obtained by Hilbert expansion. In \cite{Caflisch, Nishida}, Nishida and Caflisch used Hilbert expansion on the compressible Euler limit. Combining with nonlinear energy method and Hilbert expansion,  Guo-Jang-Jiang justified the acoustic limit \cite{GJJ-KRM2009,GJJ-CPAM2010}. Furthermore, Guo proved the incompressible Navier-Stokes limit \cite{GY-06}. All these results were based in Hilbert expansion. On the other direction, i.e. without employing Hilbert expansion, Bardos and Ukai \cite{b-u} proved the convergence for small data classical solutions from Boltzmann equation with hard potential to incompressible Navier-Stokes equations using the semigroup method and spectrum analysis of linearized Boltzmann operator. For general collision kernels, Briant, Jiang-Xu-Zhao and Gallagher-Tristani also proved this incompressible Navier-Stokes limits recently \cite{Briant, BMMouhot, Gallagher-Tristani, Jiang-Xu-Zhao-2018-Indiana}.

The BGL program (named after Bardos-Golse-Levermore's work \cite{BGL1, BGL2}) is to justify weak limit starting from DiPerna-Lions' renormalized solutions of Boltzmann equations to weak solutions of incompressible Navier-Stokes. This program are completed by  Golse and Saint-Raymond with cutoff Maxwell collision kernel in \cite{Go-Sai04}. Later, this convergence result was extended to soft potentials cases, non-cutoff and bounded domain cases, etc (see \cite{Arsenio, MSRM-CPAM2003, JM-CPAM2017}).

For VPB system, Guo and Jang prived the limit from the scaled VPB system to compressible Euler-Poisson system with hard-sphere interaction by the Hilbert expansion \cite{GJ}. Recently, Jiang and Zhang considered the sensitivity analysis method and energy estimates to justify the incompressible Navier-Stokes-Poisson limit and obtain the precise convergence rate without employing any results
based on Hilbert expansion for the first time \cite{Jiang-ZhangX}. In \cite{GJL}, the authors have prove the limit of the one-species VPB system to incompressible Navier-Stokes-Fourier-Poisson system by the approach of the nonlinear energy method.

In our current paper is {\em not} to take a Hilbert expansion approach in the context of classical solutions for two-species VPB system under the hard sphere potential.  We are extending the work of \cite{GJL} to the case of two particles and consider the two particles in a strong interspecies collisions. We derive the uniform bounded in $\varepsilon$ of the species sequence classical solutions $g^{\pm}_{\varepsilon}$ to the perturbed VPB system $(f^{\pm}_{\varepsilon}=M+\varepsilon Mg_{\varepsilon}^{\pm})$ near equilibrium and then rigorously justify the limit to the incompressible NSFP equations with Ohm's law.

In our paper, the two types of particles will interact when we consider two-species VPB system \eqref{scaled VPB}, such as the collisions $B(f^{+}_{\varepsilon}, f^{-}_{\varepsilon})$ and $ B(f^{-}_{\varepsilon}, f^{+}_{\varepsilon})$ have been added to the right-hand sides of equation \eqref{scaled VPB} due to interspecies collisions. As a consequence, we will encounter some difficulties which will not arise when considering the one-specie in \cite{GJL}. In the two-species perturbed VPB system \eqref{VPB-g}, $\frac{1}{\varepsilon}Q(g_{\varepsilon}, f_{\varepsilon})$ is  added since the appearance of interspecies collisions $\frac{1}{\varepsilon}B(f^{\pm}_{\varepsilon}, f^{\mp}_{\varepsilon})$. Recalling \eqref{scaled VPB}, we can derive the equation of $f_{\varepsilon}$ and $g_{\varepsilon}$ as follow
\begin{equation}\label{VPB}
\begin{cases}
&\partial_{t}f_{\varepsilon}+\frac{1}{\varepsilon}v\cdot\nabla_{x}f_{\varepsilon}- g_{\varepsilon}v\cdot\nabla_{x}\phi_{\varepsilon}+\nabla_{v}f_{\varepsilon}\cdot\nabla_{x}\phi_{\varepsilon}+\frac{1}{\varepsilon^{2}}\mathcal{L}_{1}f_{\varepsilon}=\frac{1}{\varepsilon}Q(f_{\varepsilon}, f_{\varepsilon})\\
&\partial_{t}g_{\varepsilon}+\frac{1}{\varepsilon}v\cdot\nabla_{x}g_{\varepsilon}-\frac{2}{\varepsilon}v\cdot\nabla_{x}\phi_{\varepsilon}- f_{\varepsilon}v\cdot\nabla_{x}\phi_{\varepsilon}+\nabla_{v}g_{\varepsilon}\cdot\nabla_{x}\phi_{\varepsilon}+\frac{1}{\varepsilon^{2}}\mathcal{L}_{2}g_{\varepsilon}
=\frac{1}{\varepsilon}Q(g_{\varepsilon}, f_{\varepsilon})\\
&\Delta_{x}\phi_{\varepsilon}=\int_{\mathbb{R}^{3}}g_{\varepsilon}Mdv.
\end{cases}
\end{equation}
Then we define two linear operator $\mathcal{L}_{1}$ and $\mathcal{L}_{2}$ in \eqref{linear operator} and two projection operators $P_{i}:L^{2}(Mdv)\rightarrow \text{Ker} \mathcal{L}_{i}(i=1,2)$ in \eqref{Decom}. On the one hand,  in order to deduce the fluid part of $g_{\varepsilon}$, we use the decomposition $g_{\varepsilon}=P_{2}g_{\varepsilon}+(I-P_{2})g_{\varepsilon}$ to the $g_{\varepsilon}$ in \eqref{VPB}. Therefore, we have equation of $\partial_{t}d+v\cdot\nabla_{x}d=\hat{l}+\hat{h}+\hat{m}$ in \eqref{decomposition2}, where $d(t,x)=P_{2}g_{\varepsilon}$. And we take inner product $L^{2}_{v}$ with $v$ in this equation and the $\partial_{t}d$ is vanish. Then, we obtain $\nabla_{x}b=\l \hat{l}+\hat{h}+\hat{m},v\r_{L^{2}_{v}}$. Finally, the dissipation of the fluid part $\|\nabla_{x}P_{2}g_{\varepsilon}\|_{L^{2}_{x}}^{2}$ is obtained. On the other hand, we can estimate the term $\frac{1}{\varepsilon}Q(g_{\varepsilon}, f_{\varepsilon})$ thanks to the  to the coercivity
of linearized collision operator $\mathcal{L}_{2}$  when we derive the $(x,v)$-mixed derivatives estimates. To derive the microscopic part, we apply the operator $I-P_{2}$ to the $g_{\varepsilon}$ in \eqref{VPB} and apply the kinetic dissipation of $\frac{1}{\varepsilon^{2}}\|(I-P_{2})g_{\varepsilon}\|^{2}_{L^{2}_{x,v}(\nu)}$.
The coercivity of the linearized operator $\mathcal{L}_{2}$ and the fluid part $d(t,x)$ of $\mathcal{L}_{2}$ have dissipative effect.
These are the most important steps and the main
novelty of this paper, see Section \ref{local-in-time} and Section \ref{Sec: Uniform-Bnd}.

Our paper is organized as follows. In Section \ref{local-in-time}, we construct a local-in-time solution $(g^{+}_{\varepsilon},g^{-}_{\varepsilon}, \phi_{\varepsilon})$ of the perturbed VPB system with small initial data for any given $\varepsilon\in(0,1]$. In Section \ref{Sec: Uniform-Bnd}, we gain the uniform bounded in $\varepsilon$ to the $(g^{+}_{\varepsilon},g^{-}_{\varepsilon}, \phi_{\varepsilon})$ and extend the local-in-time solution in Section \ref{local-in-time} into a global-in-time solution under the small size of the initial data. In Section \ref{Sec: Limits}, we rigorously derive the limit from the perturbed VPB system \eqref{VPB-g} to the two-fluid incompressible NSFP equations \eqref{NSFP} with Ohm'law as $\varepsilon \rightarrow 0$.

\section{Construction of Local Solutions}\label{local-in-time}

In this section, we will prove that the perturbed VPB system \eqref{VPB-g}-\eqref{f's intial} has a unique local-in-time solution for all $0 < \varepsilon \leq 1$ by employing an iterative schedule. Before doing that, we will first do some preparatory work.

\subsection{Some Lemmas}
In order to derive the equations of $n,j$ and $w$, we introduce two functions in $\mathcal{L}_{2}$'s range as follow
\begin{equation}
\Phi(v)=v, \Psi(v)=\frac{|v|^{2}}{2}-\frac{3}{2}\in L^{2}(Mdv).
\end{equation}
Thus, there are inverses $\hat{\Phi}\in L^{2}(Mdv)$ and $\hat{\Psi}\in L^{2}(Mdv)$ such that
\begin{equation}\label{hat{PP}}
\Phi=\mathcal{L}_{2}\hat{\Phi}, \Psi=\mathcal{L}_{2}\hat{\Psi},
\end{equation}
which can be uniquely determined by the fact that they are orthogonal to the kernel of $\mathcal{N}_{2}$ (see the Section 2.4.5 of \cite{Saint-Raymond-2009-Boltzmann}).

\begin{lemma}\label{a-b}
There exist two scalar-valued functions $\alpha,\beta:[0,+\infty)\to \mathbb{R}$  such that
\begin{center}
$\hat{\Phi}(v)=\alpha(|v|)\Phi(v)$ and $\hat{\Psi}(v)=\beta(|v|)\Psi(v)$,
\end{center}
moreover, the functions $\alpha$ and $\beta$ satisfy furthermore the growth estimate
\begin{equation}
|\alpha(|v|)|+|\beta(|v|)|\lesssim 1+|v|.
\end{equation}
\end{lemma}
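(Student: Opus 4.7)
The plan is to exploit the $O(3)$-symmetry of the operator $\mathcal{L}_2$ to constrain the structural form of the preimages $\hat\Phi$ and $\hat\Psi$, and then derive the growth estimate from the collision-frequency decomposition $\mathcal{L}_2 = \nu(v) I - K$.

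First, I would verify that $\mathcal{L}_2$ commutes with the natural $O(3)$ action on $L^2(Mdv)$. Writing $f_R(v) := f(Rv)$ for $R \in O(3)$, the change of variables $(v_*,\omega) \mapsto (Rv_*, R\omega)$ in the integral defining $B(Mf,M)$, combined with the $O(3)$-invariance of the Maxwellian $M$ and of the hard-sphere kernel $|(v-v_*)\cdot\omega|$, yields $(\mathcal{L}_2 f)(Rv) = (\mathcal{L}_2 f_R)(v)$. Since $\hat\Phi$ and $\hat\Psi$ are uniquely determined by $\mathcal{L}_2\hat\Phi=\Phi$, $\mathcal{L}_2\hat\Psi=\Psi$ together with the orthogonality $\hat\Phi,\hat\Psi \perp \mathcal{N}_2 = \mathrm{Span}\{1\}$, combining this uniqueness with the vector transformation $\Phi(Rv)=R\Phi(v)$ and the scalar invariance $\Psi(Rv)=\Psi(v)$ transfers the equivariance to the preimages: $\hat\Phi(Rv) = R\hat\Phi(v)$ and $\hat\Psi(Rv) = \hat\Psi(v)$ for every $R \in O(3)$.

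Next, I would extract the scalar multipliers. For $\hat\Phi$, picking $v = re_1$ and restricting $R$ to the stabilizer of $e_1$ in $O(3)$ forces $\hat\Phi(re_1)$ to be fixed by every rotation about the $e_1$-axis, hence parallel to $e_1$; writing $\hat\Phi(re_1) = c(r) e_1$ and extending by covariance yields $\hat\Phi(v) = \alpha(|v|) v$ with $\alpha(r) := c(r)/r$. For $\hat\Psi$, rotational invariance immediately gives $\hat\Psi(v) = \gamma(|v|)$ for a scalar function $\gamma$; the factorization $\hat\Psi(v) = \beta(|v|)\Psi(v)$ is then obtained by setting $\beta(|v|) := \gamma(|v|)/\Psi(v)$ where $\Psi(v) \neq 0$, the required regularity near $|v|=\sqrt{3}$ being ensured by the pointwise estimate derived below.

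Finally, for the growth estimate I would use the standard splitting $\mathcal{L}_2 = \nu(v) I - K$, where $\nu(v)\sim 1+|v|$ by \eqref{nu} and $K$ is the gain-term integral operator. Rewriting the resolvent identity as $\hat\Phi = \nu^{-1}\Phi + \nu^{-1}K\hat\Phi$ (and similarly for $\hat\Psi$), and combining the $L^2(Mdv)$ bound on $\hat\Phi, \hat\Psi$ from Fredholm theory with classical pointwise estimates on the kernel of $K$ for the hard-sphere model, one obtains $|\hat\Phi(v)| \lesssim (1+|v|)|v|$ and $|\hat\Psi(v)| \lesssim 1+|v|^2$, which translates through the factorizations to $|\alpha(|v|)| + |\beta(|v|)| \lesssim 1+|v|$. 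The main technical point is the last step: reconciling the potential singularity of $\beta$ at $|v|=\sqrt{3}$ with the linear bound requires either a cancellation derived from the resolvent formula showing that $\gamma$ vanishes there, or alternatively a more direct kernel estimate on $\nu^{-1}K\hat\Psi$; this is where I expect most of the work to lie, while the symmetry arguments in the previous steps are essentially routine.
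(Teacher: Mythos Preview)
Your outline is correct and follows the standard argument that the paper's cited references (Golse--Saint-Raymond, Proposition 6.5, and Saint-Raymond's lecture notes, Section 2.4.5) carry out in detail; the paper itself gives no independent proof beyond those citations. Your identification of the $O(3)$-equivariance step as routine and the resolvent estimate $\hat\Psi = \nu^{-1}\Psi + \nu^{-1}K\hat\Psi$ as the source of the growth bound is exactly the structure of those proofs, and your flag on the potential singularity of $\beta$ at $|v|=\sqrt{3}$ is the one genuinely delicate point---in the references this is handled via the smoothing properties of the hard-sphere gain kernel, which force $\gamma(|v|) = \hat\Psi(v)$ to vanish at the same order as $\Psi$ on that sphere.
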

\begin{proof}
The proof can be justified by the similar arguments in the Propositition 6.5 of \cite{Golse and Saint-Raymond 2005} and the Section 2.4.5 of \cite{Saint-Raymond-2009-Boltzmann}.
\end{proof}

Then, we consider of the linearized Boltzmann collisional operator $\mathcal{L}_{1}$ and another linearized operator $\mathcal{L}_{2}$ defined in \eqref{linear operator}. $\mathcal{L}_{1}$ and $\mathcal{L}_{2}$ give us the dissipative structure of the kinetic equation thanks to the coercivity of $\mathcal{L}_{1}, \mathcal{L}_{2}$ (see Lemma 3.3 in \cite{GY-06} for more detail).
\begin{lemma}\label{Lmm-Coercivity-L}
	For any $f\in L^{2}(Mdv)$, there exists a  $\delta>0$ such that
	\begin{equation}
	  \l \mathcal{L}_{i}f, f \rangle_{L^2_v} \geq 2\delta \| (I-P_{i}) f \|_{L^2_v (\nu)}^2 \,.
	\end{equation}
	Moreover, there exist two positive numbers $\delta_1 , \delta_2 > 0$ such that
	\begin{equation}
	  \begin{aligned}
	    \big\l \partial^\beta_v (I-P_{i})\mathcal{L}_{i} f , \partial^\beta_v (I-P_{i}) f \big\rangle_{L^2_v} \geq 2\delta_1 \| \partial^\beta_v ( I -P_{i} ) f \|^2_{L^2_v (\nu)} - \delta_2 \sum_{\tilde{\beta}< \beta} \| \partial^{\tilde{\beta}}_v (I-P_{i}), f \|^2_{L^2_v}
	  \end{aligned}
	\end{equation}
	for all multi-indexes $\beta \in \mathbb{N}^3$, where $P_{i}(i=1,2)$ are defined \eqref{Decom}.
\end{lemma}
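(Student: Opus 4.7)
The plan is to exploit the Grad splitting of the linearized collision operator: for hard-sphere interaction, each $\mathcal{L}_i$ admits the decomposition $\mathcal{L}_i f = \nu(v) f - K_i f$, where $\nu(v)$ is the collision frequency satisfying the two-sided bound $C_1(1+|v|) \leq \nu(v) \leq C_2(1+|v|)$ from (1.11), and $K_i$ is a velocity-integral operator given by a smooth (in fact $C^\infty$ off the diagonal and exponentially decaying) kernel. The key analytic facts are that $\mathcal{L}_i$ is self-adjoint and nonnegative on $L^2(Mdv)$, its kernel is exactly $\mathcal{N}_i$ described in (1.12), and $K_i$ is compact on $L^2(Mdv)$.

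For the first inequality, since $\mathcal{L}_i P_i f = 0$ and $\mathcal{L}_i$ is self-adjoint, $\langle \mathcal{L}_i f, f\rangle_{L^2_v} = \langle \mathcal{L}_i (I-P_i) f, (I-P_i) f\rangle_{L^2_v}$. Hilbert's classical spectral gap argument, combined with the Fredholm alternative applied to the compact operator $K_i$ and the identification of $\ker \mathcal{L}_i = \mathcal{N}_i$, gives the existence of $\delta > 0$ with $\langle \mathcal{L}_i h, h\rangle_{L^2_v} \geq 2\delta \|h\|^2_{L^2_v(\nu)}$ whenever $h \perp \mathcal{N}_i$. Applying this to $h = (I-P_i) f$ yields the first estimate.

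For the weighted $v$-derivative inequality, I would write
\[
\partial^\beta_v (I-P_i)\mathcal{L}_i f = \nu(v)\,\partial^\beta_v (I-P_i) f + \sum_{\tilde\beta < \beta} C^\beta_{\tilde\beta} (\partial^{\beta-\tilde\beta}\nu)\,\partial^{\tilde\beta}_v (I-P_i) f - \partial^\beta_v (I-P_i) K_i f,
\]
so that pairing with $\partial^\beta_v (I-P_i) f$ produces a principal coercive piece $\|\nu^{1/2} \partial^\beta_v (I-P_i) f\|^2_{L^2_v}$ plus error terms. The commutator $[\partial^\beta_v, P_i]$ is tractable because $P_1$ is polynomial-in-$v$ multiplication by Maxwellian moments and $P_2$ is even simpler, so $\partial^\beta_v P_i f$ reduces to combinations of $\partial^{\tilde\beta}_v f$ with $\tilde\beta \leq \beta$. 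For the $K_i$ term, integration by parts on the velocity integral (transferring $\partial^{\beta-\tilde\beta}_v$ onto the smooth, fast-decaying kernel) converts $v$-derivatives of order up to $|\beta|$ on $f$ into integral operators whose operator norm on $L^2_v(\nu)$ is bounded and can be absorbed by the principal part through a small constant and a Cauchy-Schwarz / weighted Young inequality, leaving an $L^2_v$ (unweighted) residue on $\partial^{\tilde\beta}_v (I-P_i) f$ with $\tilde\beta < \beta$. Collecting terms gives exactly the stated bound with some $\delta_1 > 0$ and $\delta_2 > 0$.

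The main technical obstacle is the careful handling of the $K_i$ contribution under $v$-differentiation, namely showing that $\partial^\beta_v K_i f$ can be re-expressed (via integration by parts against the kernel) so that all top-order $v$-derivatives of $f$ appearing are multiplied by small compact-type weights, while the unavoidable lower-order terms carry only the unweighted $L^2_v$ norm. This is exactly the content of the estimates in Lemma~3.3 of~\cite{GY-06}, and I would invoke that treatment after verifying that both $\mathcal{L}_1$ and $\mathcal{L}_2$ (which differ only by the amount of self-collision vs. cross-collision structure) satisfy the same kernel regularity and decay estimates. Hence the argument applies uniformly to $i = 1, 2$.
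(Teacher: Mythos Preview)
Your proposal is correct and follows the standard Grad-splitting approach; the paper itself does not supply a proof but simply refers to Lemma~3.3 in \cite{GY-06}, which is precisely the reference you invoke and whose argument your outline reproduces. In particular, your handling of both $i=1,2$ via the common $\nu-K_i$ decomposition and the derivative-commutator analysis matches that treatment, so there is nothing to add.
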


For the bilinear symmetric operator $Q$ defined in \eqref{linear operator}, we have the following estimates, and relevant proofs can be found in Lemma 3.3 of \cite{GY-06}.
\begin{lemma}\label{Lmm-Q}
	Let $g_i (x, v) \,, (i=1,2,3) $ be smooth functions, then we have
	\begin{equation}
	\big| \l \partial^\beta_v Q (g_1, g_2), g_3 \r_{L^2_{x,v}} \big| \lesssim \sum_{\substack{\beta_1 + \beta_2 \leq \beta }} \int_{\R^3}
	\big(\| \partial_v^{\beta_1} g_1 \|_{L^2_v (\nu)} \| \partial^{\beta_2}_v g_2 \|_{L^2_v}+\| \partial_v^{\beta_1} g_1 \|_{L^2_v} \| \partial^{\beta_2}_v g_2 \|_{L^2_v (\nu)}\big) \| g_3 \|_{L^2_v (\nu)} d x,
	\end{equation}
	for any $\beta \in \mathbb{N}^3$.
\end{lemma}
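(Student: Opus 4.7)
The plan is to mirror the argument of Lemma~3.3 in \cite{GY-06}, adapting it to the symmetric form of $Q$ used here. Using the Maxwellian identity $MM_* = M'M'_*$ on the Boltzmann collision integral, one rewrites
$$Q(g_1, g_2)(v) = \int_{\R^3}\int_{\mathbb{S}^2} M_* \bigl[g_1' g'_{2*} + g_2' g'_{1*} - g_1 g_{2*} - g_2 g_{1*}\bigr] |(v - v_*)\cdot\omega|\, d\omega\, dv_*,$$
so that one can split $Q = Q^+ - Q^-$ into a gain piece (the primed terms) and a loss piece (the unprimed terms). Because $Q$ is built from $B(Mg_1, Mg_2) + B(Mg_2, Mg_1)$, the two asymmetric ways $(g_1, g_2)$ enter the integrand will eventually produce both orderings of the $\nu$-weight in the final bound, namely the summands with $\|\partial_v^{\beta_1} g_1\|_{L^2_v(\nu)}\|\partial_v^{\beta_2} g_2\|_{L^2_v}$ and with $\|\partial_v^{\beta_1} g_1\|_{L^2_v}\|\partial_v^{\beta_2} g_2\|_{L^2_v(\nu)}$.

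For the loss part $Q^-$, the variable $v$ enters only through the pointwise evaluations $g_i(v)$ and through the kernel $|v - v_*|$, so $\partial_v^\beta$ distributes by Leibniz directly, producing sums of the form $\partial_v^{\beta_1} g_i(v) \cdot \int M_* P(v, v_*) g_j(v_*)\, dv_*\, d\omega$ with $\beta_1 \leq \beta$, $\beta_2 = 0$, and $P$ a polynomial bounded by $C(1 + |v - v_*|)$. For the gain part $Q^+$, the dependence $v' = v - [(v - v_*)\cdot\omega]\omega$ prevents differentiation under the integral, and I would handle this by the pre-post collisional change of variables $(v, v_*) \leftrightarrow (v', v'_*)$ at fixed $\omega$, whose Jacobian is $1$. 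This conversion rewrites $\partial_v$ as a bounded linear combination of $\partial_{v'}$ and $\partial_{v'_*}$; iterating $|\beta|$ times reduces the integrand to $(\partial_v^{\beta_1} g_1)(v')(\partial_v^{\beta_2} g_2)(v'_*)$ with $\beta_1 + \beta_2 \leq \beta$, up to polynomially bounded prefactors which are absorbed by the Gaussian decay of $M_*$. The fact that the Jacobian coefficients themselves may contribute derivatives is what forces the inequality $\beta_1 + \beta_2 \leq \beta$ rather than equality.

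Once derivatives are distributed, the bound is completed by two applications of Cauchy--Schwarz, pointwise in $x$. First, Cauchy--Schwarz in $v_*$ yields
$$\int M_* |v - v_*|\, |\partial_v^{\beta_2} g_2(v_*)|\, dv_* \leq \Bigl(\int M_* |v - v_*|^2\, dv_*\Bigr)^{1/2} \|\partial_v^{\beta_2} g_2\|_{L^2_v} \lesssim \nu(v)\, \|\partial_v^{\beta_2} g_2\|_{L^2_v},$$
using $\nu(v) = \int M_* |v - v_*|\, dv_*$ together with the bound $\int M_* |v - v_*|^2 dv_* \lesssim \nu(v)^2$. A second Cauchy--Schwarz in $v$ against $g_3$, paired against the weight $\nu(v)$, then yields $\|\partial_v^{\beta_1} g_1\|_{L^2_v(\nu)}\|g_3\|_{L^2_v(\nu)}$. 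Exchanging the roles of $g_1$ and $g_2$ (which arises from the symmetrization in the definition of $Q$) delivers the other summand in the claimed inequality. The gain piece $Q^+$ is bounded identically after the change of variables described above, and a final integration over $x \in \R^3$ gives the asserted estimate. The only nonroutine step is the change-of-variables bookkeeping for $Q^+$; the remainder is standard weighted Cauchy--Schwarz.
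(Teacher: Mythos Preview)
Your proposal is correct and aligns with the paper's own treatment: the paper does not give an independent proof but simply refers the reader to Lemma~3.3 of \cite{GY-06}, which is exactly the argument you have sketched (gain/loss splitting, Leibniz on the loss term, pre-post change of variables on the gain term, weighted Cauchy--Schwarz). Your write-up is in fact more detailed than what the paper provides.
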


For the term $P_{1}f$, we have the following  lemma
\begin{lemma}\label{P1f}
Suppose $h(v)$ be a polynomial of $v$, then for any $|\beta|\geq 0$, we have
\begin{equation*}
\begin{aligned}
\|\partial_{v}^{\beta}P_{1}f|h(v)|\|_{L^{2}_{x,v}}\lesssim\|P_{1}f\|_{L^{2}_{x,v}}.
\end{aligned}
\end{equation*}
\begin{proof}
It is obvious true if $|\beta|=0$ or $|\beta|\geq 3$ since $P_{1}f=a(t,x)+b(t,x)\cdot v+c(t,x)|v|^{2}$, where $a(t,x)=\int_{\R^{3}} f(\frac{5}{2}-\frac{|v|^{2}}{2})Md v$, $b(t,x)=\int_{\R^{3}} fvMd v$ and $c(t,x)=\int_{\R^{3}} f(\frac{|v|^{2}}{6}-\frac{1}{2})Md v$.

We only prove the case of $|\beta|=1$ since $|\beta|=2$ is as similar as the case of $|\beta|=1$. We calculate directly that
\begin{equation*}
\begin{aligned}
\|\partial_{v}P_{1}f|h(v)|\|^{2}_{L^{2}_{x,v}}\lesssim \int_{\R^{3}}\int_{\R^{3}}|b|^{2}|h(v)|^{2}Md vd x+ \int_{\R^{3}}\int_{\R^{3}}|c|^{2}|v^{2}||h(v)|^{2}Md vd x\lesssim \|b|_{L^{2}_{x}}^{2}+\|c\|_{L^{2}_{x}}^{2},
\end{aligned}
\end{equation*}
since $\partial_{v}P_{1}f=b+2c\cdot v$ and $h(v)$ be a polynomial of $v$. As the term $\|P_{1}f\|_{L^{2}_{x,v}}^{2}$, we also have
\begin{equation*}
\begin{aligned}
\|P_{1}f\|^{2}_{L^{2}_{x,v}}&=\int_{\R^{3}}\int_{\R^{3}}|a+b\cdot v+c|v|^{2}|^{2}Md vd x=\|a\|^{2}_{L^{2}_{x}}+\|b\|^{2}_{L^{2}_{x}}+15\|c\|_{L^{2}_{x}}^{2}+3\int_{\R^{3}}acd x\\
&\geq \frac{1}{2}\|a\|_{L^{2}_{x}}^{2}+\|b\|_{L^{2}_{x}}^{2}+\frac{21}{2}\|c\|_{L^{2}_{x}}^{2},
\end{aligned}
\end{equation*}
where we mark use  Young inequality $3a c\leq\frac{1}{2}|a|^{2}+\frac{9}{2}|c|^{2}$. As a consequence, the proof is completed.
\end{proof}

\end{lemma}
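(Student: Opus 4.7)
The plan is to reduce both sides of the inequality to comparable combinations of $\|a\|_{L^2_x}$, $\|b\|_{L^2_x}$, $\|c\|_{L^2_x}$ and then match them. The starting observation is that since $P_1 f = a(t,x) + b(t,x)\cdot v + c(t,x)|v|^2$ is affine in $v$ plus a quadratic, and $a,b,c$ depend only on $(t,x)$, each velocity derivative $\partial_v^\beta$ acts purely on the polynomial in $v$. Hence $\partial_v^\beta P_1 f$ is again a polynomial in $v$ of degree at most $2-|\beta|$ with coefficients that are linear combinations of $a,b,c$, and it vanishes identically for $|\beta|\geq 3$.

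First I would handle the left-hand side. For each $|\beta|\leq 2$, $\partial_v^\beta P_1 f \cdot |h(v)|$ is a polynomial in $v$ whose coefficients are in $L^2_x$ (being $a$, $b$, or $c$ times numerical constants). Integrating its square against $M\,dv$ produces finite Gaussian moments $\int p(v)M\,dv$ of polynomials, which are universal constants. Summing over the finitely many monomial contributions yields
\begin{equation*}
\|\partial_v^\beta P_1 f \cdot |h(v)|\|_{L^2_{x,v}}^2 \lesssim \|a\|_{L^2_x}^2 + \|b\|_{L^2_x}^2 + \|c\|_{L^2_x}^2.
\end{equation*}

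Next I would show the matching lower bound $\|P_1 f\|_{L^2_{x,v}}^2 \gtrsim \|a\|_{L^2_x}^2 + \|b\|_{L^2_x}^2 + \|c\|_{L^2_x}^2$. Expanding $(a+b\cdot v+c|v|^2)^2$ and using the Gaussian moment identities $\int v_i M\,dv = 0$, $\int v_i v_j M\,dv = \delta_{ij}$, $\int v_i|v|^2 M\,dv = 0$, $\int |v|^2 M\,dv = 3$, $\int |v|^4 M\,dv = 15$, all odd-order cross terms vanish and one obtains
\begin{equation*}
\|P_1 f\|_{L^2_{x,v}}^2 = \int_{\mathbb{R}^3}\bigl(a^2 + 6ac + 15 c^2 + |b|^2\bigr)\,dx.
\end{equation*}
The $2\times 2$ quadratic form in $(a,c)$ has matrix $\bigl(\begin{smallmatrix}1 & 3\\3 & 15\end{smallmatrix}\bigr)$ with positive determinant $6$ and positive trace, so it is positive definite; equivalently $(a+3c)^2 + 6c^2$ is a sum of squares dominating $a^2+c^2$ up to a constant. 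Combined with the $|b|^2$ term this gives the desired lower bound, and the two estimates together close the lemma.

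The only point requiring care—and arguably the main (minor) obstacle—is the coercivity of the cross-term-bearing quadratic form in $(a,c)$: naively the cross term $6ac$ might look dangerous, but the determinant computation confirms positive definiteness. Everything else is a routine polynomial-times-Gaussian moment count. The chosen coefficients $a = \langle f,\tfrac{5}{2}-\tfrac{|v|^2}{2}\rangle$, $c = \langle f,\tfrac{|v|^2}{6}-\tfrac{1}{2}\rangle$ are designed precisely so that $P_1 f$ has the clean form $a+b\cdot v+c|v|^2$, so no change of basis is needed.
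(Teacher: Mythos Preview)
Your proposal is correct and follows essentially the same approach as the paper: bound the left-hand side above by $\|a\|_{L^2_x}^2+\|b\|_{L^2_x}^2+\|c\|_{L^2_x}^2$ via finite Gaussian moments, then bound $\|P_1 f\|_{L^2_{x,v}}^2$ below by the same quantity. The only cosmetic differences are that you treat all $|\beta|\leq 2$ uniformly rather than singling out $|\beta|=1$, and you verify coercivity of the $(a,c)$ quadratic form via the determinant / completing the square $(a+3c)^2+6c^2$, whereas the paper uses Young's inequality; these are equivalent, and your computation of the cross term as $6\int ac\,dx$ is the correct one.
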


\subsection{Local-in-time solution}

In this subsection, we will prove that the perturbed VPB system \eqref{VPB-g} for all $0 < \varepsilon\leq 1$ has a unique  local-in-time solution under small size of the initial data. The proof  is divided into the following three steps. The first step is to construct the approximation equation. According to reference \cite{Jiang-Xu-Zhao-2018-Indiana}, we can know that the linear approximate system has a solution for the fixed $\varepsilon\in (0,1]$, the second step is to obtain the energy estimate of the uniform bound of $\varepsilon$ of the approximate system, the third step is to obtain that the perturbed  VPB system has a local-in-time solution under small size of the initial data by compactness analysis. To simplify the estimation, we introduce a new dissipative term as
\begin{equation}\label{New D}
\tilde{\mathcal{D}}_{N,\varepsilon}(G)= \frac{1}{\varepsilon^2}\| (\mathbb{I}-\mathbb{P}) G \|^2_{H^N_{x,v}(\nu)}.
\end{equation}

Then we have the following lemma.

\begin{lemma}\label{Lmm-Local}
	There exist $ 0 < \tau \leq 1 $ and $ 0 < T \leq 1 $ such that for any $ 0 < \varepsilon \leq 1 $, $(g_{\varepsilon, 0}^{+}(x,v),g_{\varepsilon,0}^{-}(x,v))\in H^N_{x,v}, \nabla_{x}\phi_{\varepsilon,0}(x)\in H^{N}_{x}( N \geq 4)$ with $ \mathcal{E}_N (G_{\varepsilon, 0} , \phi_{\varepsilon, 0} ) \leq \tau$, the system \eqref{VPB-g}-\eqref{f's intial} admits a unique solution $(g_\varepsilon^{+},g_{\varepsilon}^{-}) \in L^\infty (0,T; H^N_{x,v}) \cap L^2(0,T; H^N_{x,v} (\nu))$ and $ \phi_\varepsilon \in L^\infty (0, T; H^{N+1}_x)$ with uniform energy bound
	\begin{equation}\label{2.3}
	  \sup_{t\in [0,T]} \mathcal{E}_N (G_\varepsilon (t) , \phi_\varepsilon (t) ) + \int_0^T \tilde{\mathcal{D}}_{N,\varepsilon}(G_\varepsilon (t)) d t \leq C,
	\end{equation}
	for some constant $C > 0$ independent of $\varepsilon$, where $\mathcal{E}_N (G_\varepsilon(t) , \phi_\varepsilon (t))$ and $\tilde{\mathcal{D}}_{N,\varepsilon}(G_\varepsilon (t))$ defined in \eqref{Energy-E} and \eqref{New D} respectively. Furthermore, $f_{\varepsilon}^{\pm}(t,x,v)=M(1+\varepsilon g_{\varepsilon}^{\pm}(t,x,v))\geq0$ .
\end{lemma}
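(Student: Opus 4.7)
The plan is a standard Picard-type iteration, with the main analytic burden being (i) an $\varepsilon$- and $n$-uniform bound on $\sup_{[0,T]} \mathcal{E}_N + \int_0^T \tilde{\mathcal{D}}_{N,\varepsilon}$, and (ii) a compactness argument producing a strong limit that solves the nonlinear problem \eqref{VPB-g}. I would set $(G^0_\varepsilon,\phi^0_\varepsilon)=(G_{\varepsilon,0},\phi_{\varepsilon,0})$ (constant in $t$) and, given $(G^n_\varepsilon,\phi^n_\varepsilon)$, define $(G^{n+1}_\varepsilon,\phi^{n+1}_\varepsilon)$ as the solution of the system obtained from \eqref{VPB-g} by freezing every quadratic coefficient at the previous iterate while keeping the streaming term $v\cdot\nabla_x$ and the collision term $\tfrac{1}{\varepsilon}\mathcal{L}_i$ linear in the new unknown, with $\phi^{n+1}_\varepsilon$ determined from $g^{n+1}_\varepsilon$ through $\Delta_x\phi^{n+1}_\varepsilon=\l g^{n+1}_\varepsilon,1\r_{L^2_v}$. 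Existence at each step is classical for such linear Vlasov--Boltzmann--Poisson problems, as in \cite{Jiang-Xu-Zhao-2018-Indiana}.

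The heart of the argument is the inductive claim that $\sup_{[0,T]}\mathcal{E}_N(G^{n+1}_\varepsilon,\phi^{n+1}_\varepsilon)+\int_0^T\tilde{\mathcal{D}}_{N,\varepsilon}(G^{n+1}_\varepsilon)\,\d t\le C$ can be closed uniformly in $n$ and $\varepsilon\in(0,1]$ once $\mathcal{E}_N(G^n_\varepsilon,\phi^n_\varepsilon)$ obeys the same bound. I would apply $\partial^\alpha_x\partial^\beta_v$ for $|\alpha|+|\beta|\le N$ to the linearized system and test against $\partial^\alpha_x\partial^\beta_v G^{n+1}_\varepsilon$. Four structural mechanisms combine: the singular streaming contribution drops out since $\l v\cdot\nabla_x G^{n+1}_\varepsilon,G^{n+1}_\varepsilon\r_{L^2_{x,v}}=0$; Lemma \ref{Lmm-Coercivity-L} furnishes the kinetic dissipation $\tfrac{1}{\varepsilon^2}\|(\mathbb{I}-\mathbb{P})G^{n+1}_\varepsilon\|^2_{L^2_{x,v}(\nu)}$, with the $|\tilde\beta|<|\beta|$ remainder absorbed by an induction on $|\beta|$ anchored at $|\beta|=0$; the singular electrostatic source $-2v\cdot\nabla_x\phi^{n+1}_\varepsilon$ in the $g^{n+1}_\varepsilon$-equation, once paired with $g^{n+1}_\varepsilon$, becomes $-2\l\nabla_x\phi^{n+1}_\varepsilon,\l g^{n+1}_\varepsilon,v\r_{L^2_v}\r_{L^2_x}$, which via the mass balance $\varepsilon\partial_t\l g^{n+1}_\varepsilon,1\r_{L^2_v}+\nabla_x\cdot\l g^{n+1}_\varepsilon,v\r_{L^2_v}=0$ (modulo $O(\varepsilon)$ cubic corrections) and the Poisson identity $\Delta_x\phi^{n+1}_\varepsilon=\l g^{n+1}_\varepsilon,1\r_{L^2_v}$ reduces to $\varepsilon\tfrac{\d}{\d t}\|\nabla_x\phi^{n+1}_\varepsilon\|^2_{L^2_x}$, thereby generating the Poisson energy on the left-hand side; and the remaining nonlinear and forcing terms $Q(f^n_\varepsilon,f^n_\varepsilon)$, $\varepsilon g^n_\varepsilon v\cdot\nabla_x\phi^n_\varepsilon$ and analogues are controlled by Lemmas \ref{Lmm-Q} and \ref{P1f} together with the Sobolev embedding $H^2_x\hookrightarrow L^\infty_x$ (which requires $N\ge 4$) in terms of small powers of $\mathcal{E}_N(G^n_\varepsilon,\phi^n_\varepsilon)$ and $\mathcal{E}_N(G^{n+1}_\varepsilon,\phi^{n+1}_\varepsilon)$. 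Summing over $|\alpha|+|\beta|\le N$ and choosing $\tau,T$ small enough closes the bootstrap.

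Finally, I would show $(G^n_\varepsilon,\phi^n_\varepsilon)$ is Cauchy in $L^\infty_t H^{N-1}_{x,v}$ by running the same energy machinery on the difference equations for $G^{n+1}_\varepsilon-G^n_\varepsilon$ (possibly on a shorter interval), extract a strong limit $(G_\varepsilon,\phi_\varepsilon)$ on $[0,T]$, and verify that it solves \eqref{VPB-g}--\eqref{f's intial} with the bound \eqref{2.3} inherited by weak lower-semicontinuity; uniqueness in this class is immediate from the same difference estimate. The positivity $f^\pm_\varepsilon=M(1+\varepsilon g^\pm_\varepsilon)\ge 0$ is obtained by rewriting the original $f^\pm_\varepsilon$-equations in gain--loss Duhamel form along the characteristics of the Vlasov field $(v,\pm\varepsilon\nabla_x\phi_\varepsilon)$: the gain part is nonnegativity-preserving and the loss part appears as a positive exponential multiplicative factor, so the nonnegative cone is invariant. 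I expect the main obstacle to lie in the mixed $v$-derivative step: the coercivity remainder $\tfrac{\delta_2}{\varepsilon^2}\sum_{\tilde\beta<\beta}\|\partial^{\tilde\beta}_v(\mathbb{I}-\mathbb{P})G^{n+1}_\varepsilon\|^2_{L^2_{x,v}(\nu)}$ carries the full singular prefactor $\varepsilon^{-2}$, so the induction on $|\beta|$ must be organized with strictly decreasing weights; concurrently the singular electrostatic source must be processed through the Poisson/mass-balance identity rather than a naive Cauchy--Schwarz bound, which is the new delicacy introduced by the two-species coupling between $g_\varepsilon$ and $\phi_\varepsilon$ relative to the single-species setting.
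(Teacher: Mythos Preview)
Your plan follows essentially the same iteration-plus-bootstrap scheme as the paper, and the four structural mechanisms you identify (vanishing of streaming, coercivity of $\mathcal{L}_i$, the Poisson/mass-balance cancellation, and control of nonlinearities via Lemmas~\ref{Lmm-Q}--\ref{P1f}) are exactly those the paper exploits. Two refinements are worth flagging. First, your statement ``apply $\partial^\alpha_x\partial^\beta_v$ to the linearized system and test against $\partial^\alpha_x\partial^\beta_v G^{n+1}_\varepsilon$'' is too compressed: for $|\beta|\ge 1$ the paper first applies the microscopic projection $\mathbb{I}-\mathbb{P}$ to the iterated equations (obtaining \eqref{2.81}--\eqref{2.82}) and only then differentiates and tests against $\partial^\alpha_x\partial^\beta_v(\mathbb{I}-\mathbb{P})G^{n+1}_\varepsilon$. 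Testing the unprojected equation against the full $\partial^\beta_v G^{n+1}_\varepsilon$ would produce a cross term $\varepsilon^{-2}\l\partial^\beta_v\mathcal{L}_i(I-P_i)G^{n+1}_\varepsilon,\partial^\beta_v P_iG^{n+1}_\varepsilon\r_{L^2_{x,v}}$ which, after Cauchy--Schwarz, carries an uncontrollable $\varepsilon^{-1}$ factor; working on the projected equation avoids this, and the energy $\mathcal{E}_N$ in \eqref{Energy-E} only records mixed $v$-derivatives of $(\mathbb{I}-\mathbb{P})G$ anyway. Second, the mass balance $\varepsilon\partial_t\l g^{n+1}_\varepsilon,1\r_{L^2_v}+\nabla_x\cdot\l g^{n+1}_\varepsilon,v\r_{L^2_v}=0$ holds \emph{exactly} for the iterated system---the two force terms $-f^{n+1}_\varepsilon v\cdot\nabla_x\phi^n_\varepsilon$ and $\nabla_v f^{n+1}_\varepsilon\cdot\nabla_x\phi^n_\varepsilon$ cancel upon integration against $M\,\d v$---so no ``$O(\varepsilon)$ cubic corrections'' arise in the Poisson energy identity. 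On the minor side, the paper passes to the limit by weak compactness rather than a Cauchy-sequence argument, and establishes positivity by induction over the iterates $f^{n,\pm}_\varepsilon\ge 0$ rather than by a Duhamel argument on the limit equation; either route is acceptable.
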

\begin{proof}[Proof of Lemma \ref{Lmm-Local}]
	For any fixed $\varepsilon \in (0,1]$, we consider the following linear iterative approximate system \eqref{Iter-Approx-Syst} with initial data \eqref{IC-Iter-Appro}
	\begin{equation}\label{Iter-Approx-Syst}
	\left\{
	\begin{array}{l} \partial_{t}f^{n+1}_{\varepsilon}+\frac{1}{\varepsilon}v\cdot\nabla_{x}f^{n+1}_{\varepsilon}-g^{n+1}_{\varepsilon}v\cdot\nabla_{x}\phi^{n}_{\varepsilon}+\nabla_{v}g^{n+1}_{\varepsilon}\cdot\nabla_{x}\phi_{n}+\frac{1}{\varepsilon}\mathcal{L}_{1}f^{n+1}_{\varepsilon}=\frac{1}{\varepsilon}Q(f^{n}_{\varepsilon},f^{n}_{\varepsilon})\,,\\
\partial_{t}g^{n+1}_{\varepsilon}+\frac{1}{\varepsilon}v\cdot\nabla_{x}g^{n+1}_{\varepsilon}-\frac{2}{\varepsilon}v\cdot\nabla_{x}\phi^{n+1}_{\varepsilon}-f^{n+1}_{\varepsilon}v\cdot\nabla_{x}\phi^{n}_{\varepsilon}+\nabla_{v}f^{n+1}_{\varepsilon}\cdot\nabla_{x}\phi^{n}_{\varepsilon}
+\frac{1}{\varepsilon^{2}}\mathcal{L}_{2}g^{n+1}_{\varepsilon}=\frac{1}{\varepsilon}Q(g^{n}_{\varepsilon},f^{n}_{\varepsilon})\,,\\
	\Delta_x \phi^{n+1}_\varepsilon = \l 1, g^{n+1}_\varepsilon\r_{L^2_v},
	\end{array}
	\right.
	\end{equation}
	with initial data
	\begin{equation}\label{IC-Iter-Appro}
	\begin{aligned}
	G^{n+1}_{\varepsilon} |_{t=0} = G_{\varepsilon, 0}(x,v).
	\end{aligned}
	\end{equation}

	We start with $G_\varepsilon^{0} (t,x,v) =G_{\varepsilon, 0} (x,v)$ and $f_{\varepsilon}^{0,\pm}=M(1+\varepsilon g_{\varepsilon}^{0, \pm})=M(1+\varepsilon \frac{f_{\varepsilon,0}(x,v)\pm g_{\varepsilon,0}(x,v)}{2})\geq0 $  for all $t \geq 0$. The existence of $(f^{n+1}_{\varepsilon}, g^{n+1}_{\varepsilon}, \phi^{n+1}_\varepsilon)$ for the linear Cauchy problem \eqref{Iter-Approx-Syst}-\eqref{IC-Iter-Appro} is assured above by employing the standard linear theory (see \cite{Jiang-Xu-Zhao-2018-Indiana}), once given initial data $f^{n}_{\varepsilon}|_{t=0} = f_{\varepsilon,0} (x,v)$ and $g^{n}_{\varepsilon}|_{t=0} = g_{\varepsilon,0} (x,v)$.

	Next, we derive the uniform energy estimates for $\varepsilon$ of the iterative approximate system \eqref{Iter-Approx-Syst}. For simplicity, we take $f^{n+1}_{\varepsilon}, g^{n+1}_{\varepsilon}$ as $f^{n+1}, g^{n+1}$ in what follows. For $ \alpha \in \mathbb{N}^3 $ with $|\alpha| \leq N$, we first act the derivative operator $\partial_x^\alpha$ on the first $f^{n+1}, g^{n+1}$-equation of \eqref{Iter-Approx-Syst}, respectively, and take $L^2_{x,v}$-inner product with $\partial_x^\alpha f^{n+1},$ $\partial_x^\alpha g^{n+1}$,  then we gain

\begin{equation}\label{2.4}
\begin{aligned}
&\frac{1}{2}\frac{d}{dt}\big(\| \partial_x^\alpha G^{n+1} \|^2_{L^2_{x,v}}+2\|\partial_{x}^{\alpha}\nabla_{x}\phi^{n+1}\|^{2}_{L^{2}_{x}}\big)+\frac{\delta}{\varepsilon^{2}}\| \partial_x^\alpha (\mathbb{I}-\mathbb{P})G^{n+1} \|_{L^2_{x,v}(\nu)}^2\\
&\leq\underbrace{-\l\nabla_{x}\phi^{n}\cdot\nabla_{v}\partial_{x}^{\alpha}g^{n+1}\cdot\partial_{x}^{\alpha}f^{n+1}+\nabla_{x}\phi^{n}\cdot\nabla_{v}\partial_{x}^{\alpha}f^{n+1}\cdot\partial_{x}^{\alpha}g^{n+1},1\r_{L^{2}_{x,v}}}_{D_{1}}\\
&\underbrace{-\sum_{0\neq\tilde{\alpha}\leq\alpha}\l\partial_{x}^{\tilde{\alpha}}\nabla_{x}\phi^{n}\cdot\nabla_{v}\partial_{x}^{\alpha-\tilde{\alpha}}g^{n+1}\cdot\partial_{x}^{\alpha}f^{n+1}
+\partial_{x}^{\tilde{\alpha}}\nabla_{x}\phi^{n}\cdot\nabla_{v}\partial_{x}^{\alpha-\tilde{\alpha}}f^{n+1}\cdot\partial_{x}^{\alpha}g^{n+1},1\r_{L^{2}_{x,v}}}_{_{D_{2}}}\\
&+\underbrace{\l\partial_{x}^{\alpha}(g^{n+1}v\cdot\nabla_{x}\phi^{n})\cdot\partial_{x}^{\alpha}f^{n+1}+\partial_{x}^{\alpha}(f^{n+1}v\cdot\nabla_{x}\phi^{n})\cdot\partial_{x}^{\alpha}g^{n+1},1\r_{L^{2}_{x,v}}}_{D_{3}}\\
&+\underbrace{\frac{1}{\varepsilon}\l \partial_{x}^{\alpha}Q(f^{n},f^{n}),\partial_{x}^{\alpha}f^{n+1}\r_{L^{2}_{x,v}}}_{D_{4}}+\underbrace{\frac{1}{\varepsilon}\l \partial_{x}^{\alpha}Q(g^{n},f^{n}),\partial_{x}^{\alpha}g^{n+1}\r_{L^{2}_{x,v}}}_{D_{5}}\\
\end{aligned}
\end{equation}
where we make use of Lemma \ref{Lmm-Coercivity-L}, $\partial_{t}(a^{n+1}+3c^{n+1})+\frac{1}{\varepsilon}\nabla_{x}\cdot b^{n+1}=0$ and Poisson equation of $\Delta_{x}\phi^{n+1}=\int g^{n+1}Md v$. 	

Now we estimate the secondary term $D_{1}$.
\begin{equation}
\begin{aligned}\label{APP2}
D_{1}=&-\l v\nabla_{x}\cdot\phi^{n},\partial_{x}^{\alpha}f^{n+1}\cdot\partial_{x}^{\alpha}g^{n+1}\r_{L^{2}_{x,v}}\\
=&\underbrace{-\l v\cdot\nabla_{x}\phi^{n},\partial_{x}^{\alpha}P_{1}f^{n+1}\cdot\partial_{x}^{\alpha}P_{2}g^{n+1}\r_{L^{2}_{x,v}}}_{D_{11}}\underbrace{-\l v\cdot\nabla_{x}\phi^{n},\partial_{x}^{\alpha}P_{1}f^{n+1}\cdot\partial_{x}^{\alpha}(I-P_{2})g^{n+1}\r_{L^{2}_{x,v}}}_{D_{12}}\\
&\underbrace{-\l v\cdot\nabla_{x}\phi^{n},\partial_{x}^{\alpha}(I-P_{1})f^{n+1}\cdot\partial_{x}^{\alpha}P_{2}g^{n+1}\r_{L^{2}_{x,v}}}_{D_{13}}\underbrace{-\l v\cdot\nabla_{x}\phi^{n},\partial_{x}^{\alpha}(I-P_{1})f^{n+1}\cdot\partial_{x}^{\alpha}(I-P_{2})g^{n+1}\r_{L^{2}_{x,v}}}_{D_{14}},
\end{aligned}
\end{equation}
where we make use  integral by part, the decomposition of $G=\mathbb{P}G+(\mathbb{I}-\mathbb{P})G$.

We have estimate the term $D_{11}$ by $N\geq 4$, the H\"older inequality,  the Sobolev embedding $H^2_x \hookrightarrow L^\infty_x$ and $\|\partial_{x}^{\alpha}P_{1}f\|_{L^{2}_{x,v}(\nu)}\lesssim\|\partial_{x}^{\alpha}f\|_{L^{2}_{x,v}}, \|\partial_{x}^{\alpha}P_{2}g\|_{L^{2}_{x}}\lesssim \|\partial_{x}^{\alpha}g\|_{L_{x,v}^{2}}$ that
\begin{equation*}
\begin{aligned}
D_{11}&=-\l v \cdot \nabla \phi^n, \partial_{x}^{\alpha}P_{2}g^{n+1}\partial_{x}^{\alpha}P_{1}f^{n+1} \r_{L^2_{x,v}} \leq\int_{\mathbb{R}^{3}}|\nabla_{x}\phi^{n}||\partial_{x}^{\alpha}P_{2}g^{n+1}|\|\partial_{x}^{\alpha}P_{1}f^{n+1}\|_{L^{2}_{v}}\|v\|_{L_{v}^{2}}dx\\
&\lesssim \|\nabla_{x}\phi^{n}\|_{L^{\infty}_{x}}\|\partial_{x}^{\alpha}P_{2}g^{n+1}\|_{L_{x}^{2}}\|\partial_{x}^{\alpha}P_{1}f^{n+1}\|_{L_{x,v}^{2}}\lesssim \|\nabla_{x}\phi^{n}\|_{H^{N}_{x}}\|g^{n+1}\|_{H^{N}_{x}L^{2}_{v}}\|f^{n+1}\|_{H^{N}_{x}L^{2}_{v}}\\
&\lesssim\mathcal{E}_{N}^{\frac{1}{2}}(G^{n},\phi^{n})\mathcal{E}_{N}(G^{n+1},\phi^{n+1}).
\end{aligned}
\end{equation*}

For the quantity $D_{12}$, we derive that
	\begin{equation*}
	  \begin{split}
	    D_{12} &\lesssim\|\nabla_{x}\phi^{n}\|_{H^{N}_{x}}\|\partial_{x}^{\alpha}(I-P_{2})g^{n+1}\|_{L_{x,v}^{2}(\nu)}\|\partial_{x}^{\alpha}P_{1}f^{n+1}\|_{L_{x,v}^{2}(\nu)}\\
&\lesssim\|\nabla_{x}\phi^{n}\|_{H^{N}_{x}}\|(I-P_{2})g^{n+1}\|_{H_{x}^{N}L_{v}^{2}(\nu)}\|f^{n+1}\|_{H^{N}_{x}L_{v}^{2}}\\
&\lesssim\varepsilon\mathcal{E}_{N}^{\frac{1}{2}}(G^{n},\phi^{n})\mathcal{E}_{N}^{\frac{1}{2}}(G^{n+1},\phi^{n+1})\tilde{\mathcal{D}}_{N,\varepsilon}^{\frac{1}{2}}(G^{n+1}),
	  \end{split}
	\end{equation*}
 where we make use of the Sobolev embedding $H^2_x \hookrightarrow L^{\infty}_x$ and \eqref{nu}.

	By a similar argument we can estimate $D_{13}$ and $D_{14}$ as
	\begin{equation*}
	  \begin{aligned}   D_{13}&\lesssim\|\nabla_{x}\phi^{n}\|_{H^{N}_{x}}\|g^{n+1}\|_{H^{N}_{x}L^{2}_{v}}\|(I-P_{1})f^{n+1}\|_{H^{N}_{x}L_{v}^{2}(\nu)}\lesssim\varepsilon\mathcal{E}_{N}^{\frac{1}{2}}(G^{n},\phi^{n})\mathcal{E}_{N}(G^{n+1},\phi^{n+1}),\\
D_{14}&\lesssim\|\nabla_{x}\phi^{n}\|_{H^{N}_{x}}\|(I-P_{1})g^{n+1}\|_{H^{N}_{x}L^{2}_{v}(\nu)}\|(I-P_{1})f^{n+1}\|_{H^{N}_{x}L_{v}^{2}(\nu)}\lesssim\varepsilon^{2}\mathcal{E}_{N}^{\frac{1}{2}}(G^{n},\phi^{n})\tilde{\mathcal{D}}_{N,\varepsilon}(G^{n+1}).
	  \end{aligned}
	\end{equation*}

	Thus, we thereby estimate $D_{1}$ following
	\begin{equation}\label{I2-alph1}
	  \begin{aligned}   D_{1}&\lesssim\mathcal{E}_{N}^{\frac{1}{2}}(G^{n},\phi^{n})\mathcal{E}_{N}(G^{n+1},\phi^{n+1})+\mathcal{E}_{N}^{\frac{1}{2}}(G^{n},\phi^{n})\tilde{\mathcal{D}}_{N,\varepsilon}(G^{n+1})\\
&+\mathcal{E}_{N}^{\frac{1}{2}}(G^{n},\phi^{n})\mathcal{E}_{N}^{\frac{1}{2}}(G^{n+1},\phi^{n+1})\tilde{\mathcal{D}}_{N,\varepsilon}^{\frac{1}{2}}(G^{n+1},\phi^{n+1}),
	   \end{aligned}
	\end{equation}
	where we make use of $0<\varepsilon\leq1$.

Next we estimate the term $D_{2}$ as
\begin{equation*}
\begin{aligned}
D_{2}=\underbrace{-\sum_{0\neq\tilde{\alpha}\leq\alpha}\l\partial_{x}^{\tilde{\alpha}}\nabla_{x}\phi^{n}\cdot\nabla_{v}\partial_{x}^{\alpha-\tilde{\alpha}}g^{n+1},\partial_{x}^{\alpha}\r_{L_{x,v}^{2}}}_{D_{21}}
\underbrace{-\sum_{0\neq\tilde{\alpha}\leq\alpha}\l\partial_{x}^{\tilde{\alpha}}\nabla_{x}\phi^{n}\cdot\nabla_{v}\partial_{x}^{\alpha-\tilde{\alpha}}f^{n+1},\partial_{x}^{\alpha}g^{n+1}\r_{L^{2}_{x,v}}}_{D_{22}}.
\end{aligned}
\end{equation*}

If $0<|\tilde{\alpha}|\leq|\alpha|-2$, we estimate $D_{21}$ as
\begin{equation*}
\begin{aligned}
D_{21}&\lesssim\sum_{0\neq\tilde{\alpha}\leq\alpha}\|\partial_{x}^{\tilde{\alpha}}\nabla_{x}\phi^{n}\|_{L_{x}^{\infty}}\|\partial_{x}^{\tilde{\alpha}-\alpha}\nabla_{v}(I-P_{2})g^{n+1}\|_{L_{x,v}^{2}}\|\partial_{x}^{\alpha}f^{n+1}\|_{L^{2}_{x,v}}\\
&\lesssim\|\nabla_{x}\phi^{n}\|_{H^{N}_{x}}\|(I-P_{2})g^{n+1}\|_{H^{N}_{x,v}(\nu)}\|f^{n+1}\|_{H^{N}_{x}L^{2}_{v}}\\
&\lesssim\varepsilon\mathcal{E}_{N}^{\frac{1}{2}}(G^{n},\phi^{n})\mathcal{E}_{N}^{\frac{1}{2}}(G^{n+1},\phi^{n+1})\tilde{\mathcal{D}}_{N,\varepsilon}^{\frac{1}{2}}(G^{n+1}),
\end{aligned}
\end{equation*}
where we use \eqref{nu}, $P_{2}g(t,x,v)=d(t,x)$ and the Sobolev embedding $H^2_x \hookrightarrow L^{\infty}_x$.

If $\alpha|-1\leq|\tilde{\alpha}|\leq|\alpha|$, we estimate $D_{21}$ as
\begin{equation*}
\begin{aligned}
D_{21}&\lesssim\sum_{0\neq\tilde{\alpha}\leq\alpha}\|\partial_{x}^{\tilde{\alpha}}\nabla_{x}\phi^{n}\|_{L_{x}^{4}}\|\partial_{x}^{\tilde{\alpha}-\alpha}\nabla_{v}(I-P_{2})g^{n+1}\|_{L_{x}^{4}L_{v}^{2}}\|\partial_{x}^{\alpha}f^{n+1}\|_{L^{2}_{x,v}}\\
&\lesssim\|\nabla_{x}\phi^{n}\|_{H^{N}_{x}}\|(I-P_{2})g^{n+1}\|_{H^{N}_{x,v}(\nu(v))}\|f^{n+1}\|_{H^{N}_{x}L^{2}_{v}}\\
&\lesssim\varepsilon\mathcal{E}_{N}^{\frac{1}{2}}(f^{n},g^{n},\phi^{n})\mathcal{E}_{N}^{\frac{1}{2}}(f^{n+1},g^{n+1},\phi^{n+1})\tilde{\mathcal{D}}_{N,\varepsilon}^{\frac{1}{2}}(f^{n+1},g^{n+1}),
\end{aligned}
\end{equation*}
where we use \eqref{nu}, $P_{2}g(t,x,v)=d(t,x)$ and the Sobolev embedding $H^1_x \hookrightarrow L^{4}_x$.

In a result, we have
\begin{equation*}
D_{21}\lesssim\varepsilon\mathcal{E}_{N}^{\frac{1}{2}}(G^{n},\phi^{n})\mathcal{E}_{N}^{\frac{1}{2}}(G^{n+1},\phi^{n+1})\tilde{\mathcal{D}}_{N,\varepsilon}^{\frac{1}{2}}(G^{n+1}).
\end{equation*}

Here, we estimate $D_{22}$ as follow
\begin{equation*}
\begin{aligned}
D_{22}&=-\sum_{0\neq\tilde{\alpha}\leq\alpha}\l\partial_{x}^{\tilde{\alpha}}\nabla_{x}\phi^{n}\cdot\nabla_{v}\partial_{x}^{\alpha-\tilde{\alpha}}P_{1}f^{n+1}, \partial_{x}^{\alpha}P_{2}g^{n+1}\r_{L^{2}_{x,v}}\\
&-\sum_{0\neq\tilde{\alpha}\leq\alpha}\l\partial_{x}^{\tilde{\alpha}}\nabla_{x}\phi^{n}\cdot\nabla_{v}\partial_{x}^{\alpha-\tilde{\alpha}}P_{1}f^{n+1}, \partial_{x}^{\alpha}(I-P_{2})g^{n+1}\r_{L^{2}_{x,v}}\\
&-\sum_{0\neq\tilde{\alpha}\leq\alpha}\l\partial_{x}^{\tilde{\alpha}}\nabla_{x}\phi^{n}\cdot\nabla_{v}\partial_{x}^{\alpha-\tilde{\alpha}}(I-P_{1})f^{n+1}, \partial_{x}^{\alpha}P_{2}g^{n+1}\r_{L^{2}_{x,v}}\\
&-\sum_{0\neq\tilde{\alpha}\leq\alpha}\l\partial_{x}^{\tilde{\alpha}}\nabla_{x}\phi^{n}\cdot\nabla_{v}\partial_{x}^{\alpha-\tilde{\alpha}}(I-P_{1})f^{n+1}, \partial_{x}^{\alpha}(I-P_{2})g^{n+1}\r_{L^{2}_{x,v}}.
\end{aligned}
\end{equation*}

Similar argument as $D_{1}$, we have
\begin{equation*}
	  \begin{aligned}   D_{22}&\lesssim\mathcal{E}_{N}^{\frac{1}{2}}(G^{n},\phi^{n})\mathcal{E}_{N}(G^{n+1},\phi^{n+1})+\mathcal{E}_{N}^{\frac{1}{2}}(G^{n},\phi^{n})\tilde{\mathcal{D}}_{N,\varepsilon}(G^{n+1})\\
&+\mathcal{E}_{N}^{\frac{1}{2}}(G^{n},\phi^{n})\mathcal{E}_{N}^{\frac{1}{2}}(G^{n+1},\phi^{n+1})\tilde{\mathcal{D}}_{N,\varepsilon}^{\frac{1}{2}}(G^{n+1}).
	   \end{aligned}
	\end{equation*}

Consequently, we obtain
\begin{equation}\label{I2-alph2}
\begin{aligned}   D_{2}&\lesssim\mathcal{E}_{N}^{\frac{1}{2}}(G^{n},\phi^{n})\mathcal{E}_{N}(G^{n+1},\phi^{n+1})+\mathcal{E}_{N}^{\frac{1}{2}}(f^{n},g^{n},\phi^{n})\tilde{\mathcal{D}}_{N,\varepsilon}(G^{n+1})\\
&+\mathcal{E}_{N}^{\frac{1}{2}}(G^{n},\phi^{n})\mathcal{E}_{N}^{\frac{1}{2}}(G^{n+1},\phi^{n+1})\tilde{\mathcal{D}}_{N,\varepsilon}^{\frac{1}{2}}(G^{n+1}).
	   \end{aligned}
\end{equation}

By the same argument as $D_{1}$, the quality $D_{3}$ can estimated as
\begin{equation}\label{I2-alph3}
\begin{aligned}   D_{3}&\lesssim\mathcal{E}_{N}^{\frac{1}{2}}(G^{n},\phi^{n})\mathcal{E}_{N}(G^{n+1},\phi^{n+1})+\mathcal{E}_{N}^{\frac{1}{2}}(f^{n},g^{n},\phi^{n})\tilde{\mathcal{D}}_{N,\varepsilon}(G^{n+1})\\
&+\mathcal{E}_{N}^{\frac{1}{2}}(G^{n},\phi^{n})\mathcal{E}_{N}^{\frac{1}{2}}(G^{n+1},\phi^{n+1})\tilde{\mathcal{D}}_{N,\varepsilon}^{\frac{1}{2}}(G^{n+1}).
	   \end{aligned}
\end{equation}

Next, we estimate $D_{4}$ and $D_{5}$
\begin{equation*}
\begin{aligned}
D_{4}&=\underbrace{\frac{1}{\varepsilon}\l \partial_{x}^{\alpha}Q(P_{1}f^{n},P_{1}f^{n}),\partial_{x}^{\alpha}(I-P_{1})f^{n+1}\r_{L_{x,v}^{2}}}_{D_{41}}
+\underbrace{\frac{1}{\varepsilon}\l \partial_{x}^{\alpha}Q((I-P_{1})f^{n},P_{1}f^{n}),\partial_{x}^{\alpha}(I-P_{1})f^{n+1}\r_{L_{x,v}^{2}}}_{D_{42}}\\
&+\underbrace{\frac{1}{\varepsilon}\l \partial_{x}^{\alpha}Q(P_{1}f^{n},(I-P_{1})f^{n}),\partial_{x}^{\alpha}(I-P_{1})f^{n+1}\r_{L_{x,v}^{2}}}_{D_{43}}+\underbrace{\frac{1}{\varepsilon}\l \partial_{x}^{\alpha}Q((I-P_{1})f^{n},(I-P_{1})f^{n}),\partial_{x}^{\alpha}(I-P_{1})f^{n+1}\r_{L_{x,v}^{2}}}_{D_{44}},
\end{aligned}
\end{equation*}
where we use the fact $\l Q(f,f),1\r_{L_{v}^{2}}=\l Q(f,f),v\r_{L_{v}^{2}}=\l Q(f,f),|v|^{2}\r_{L_{v}^{2}}=0$.

We estimate $D_{41}$ as follow
\begin{equation*}
\begin{aligned}
D_{41}&\lesssim\frac{1}{\varepsilon}\sum_{\tilde{\alpha}\leq\alpha}\int\|\partial_{x}^{\tilde{\alpha}}P_{1}f^{n}\|_{L^{2}_{v}}\|\partial_{x}^{\alpha-\tilde{\alpha}}P_{1}f^{n}\|_{L_{v}^{2}(\nu)}\|\partial_{x}^{\alpha}(I-P_{1})f^{n+1}\|_{L^{2}_{v}(\nu)}dx\\
&+\frac{1}{\varepsilon}\sum_{\tilde{\alpha}\leq\alpha}\int\|\partial_{x}^{\tilde{\alpha}}P_{1}f^{n}\|_{L^{2}_{v}(\nu)}\|\partial_{x}^{\alpha-\tilde{\alpha}}P_{1}f^{n}\|_{L_{v}^{2}}\|\partial_{x}^{\alpha}(I-P_{1})f^{n+1}\|_{L^{2}_{v}(\nu)}dx.
\end{aligned}
\end{equation*}

By the same argument as $D_{21}$, we have
\begin{equation*}
\begin{aligned}
D_{41}&\lesssim\frac{1}{\varepsilon}\|f^{n}\|_{H^{N}_{x}L^{2}_{v}}^{2}\|(I-P_{1})f^{n+1}\|_{H_{x,v}(\nu)}\lesssim\varepsilon\mathcal{E}_{N}(G^{n},\phi^{n})\tilde{\mathcal{D}}_{N,\varepsilon}^{\frac{1}{2}}(G^{n+1}),
\end{aligned}
\end{equation*}
where we use the fact $\|P_{1}f\|_{L^{2}_{x,v}(\nu)}\lesssim\|f\|_{L_{x,v}^{2}}$ and \eqref{nu}. Analogously, we have
\begin{equation*}
\begin{aligned}
D_{42}+D_{43}+D_{44}&\lesssim\varepsilon\mathcal{E}_{N}^{\frac{1}{2}}(G^{n},\phi^{n})\tilde{\mathcal{D}}_{N,\varepsilon}^{\frac{1}{2}}(G^{n})\tilde{\mathcal{D}}_{N,\varepsilon}^{\frac{1}{2}}(G^{n+1}),
\end{aligned}
\end{equation*}
where we use the fact $\|(I-P_{1})f\|_{L_{x,v}^{2}}\lesssim\|f\|_{L^{2}_{x,v}}$.

Consequently, we have
\begin{equation}\label{I2-alph4}
\begin{aligned}
D_{4}&\lesssim\mathcal{E}_{N}(G^{n},\phi^{n})\tilde{\mathcal{D}}_{N,\varepsilon}^{\frac{1}{2}}(G^{n+1})+\mathcal{E}_{N}^{\frac{1}{2}}(G^{n},\phi^{n})\tilde{\mathcal{D}}_{N,\varepsilon}^{\frac{1}{2}}(G^{n})\tilde{\mathcal{D}}_{N,\varepsilon}^{\frac{1}{2}}(G^{n+1}),
\end{aligned}
\end{equation}
where we mark use of $0<\varepsilon\leq1$.

To apply the fact that $P_{2}g(t,x,v)=d(t,x)$ and $\l Q(f,g),1\r_{L^{2}_{v}}$, we have the same estimate like $D_{4}$ as
\begin{equation}\label{I2-alph5}
\begin{aligned}
D_{5}&\lesssim\mathcal{E}_{N}(G^{n},\phi^{n})\tilde{\mathcal{D}}_{N,\varepsilon}^{\frac{1}{2}}(G^{n+1})+\mathcal{E}_{N}^{\frac{1}{2}}(G^{n},\phi^{n})\tilde{\mathcal{D}}_{N,\varepsilon}^{\frac{1}{2}}(G^{n})\tilde{\mathcal{D}}_{N,\varepsilon}^{\frac{1}{2}}(G^{n+1}).
\end{aligned}
\end{equation}

From plugging the inequalities \eqref{I2-alph1}, \eqref{I2-alph2}, \eqref{I2-alph3}, \eqref{I2-alph4} and \eqref{I2-alph5} to \eqref{2.4}, we have the estimate as follow
\begin{equation}\label{Iter-Spatial-Bnd}
\begin{aligned}
&\frac{1}{2}\frac{d}{dt}\big(\| \partial_x^\alpha G^{n+1} \|^2_{L^2_{x,v}}+2\|\partial_{x}^{\alpha}\nabla_{x}\phi^{n+1}\|^{2}_{L^{2}_{x}}\big)+\frac{2\delta}{\varepsilon^{2}}\| \partial_x^\alpha(\mathbb{I}-\mathbb{P})G^{n+1} \|_{L^2_{x,v}(\nu)}^2\\
&\lesssim\mathcal{E}_{N}^{\frac{1}{2}}(G^{n},\phi^{n})\mathcal{E}_{N}(G^{n+1},\phi^{n+1})+\mathcal{E}_{N}^{\frac{1}{2}}(G^{n},\phi^{n})\tilde{\mathcal{D}}_{N,\varepsilon}(G^{n+1})+\mathcal{E}_{N}^{\frac{1}{2}}(G^{n},\phi^{n})\tilde{\mathcal{D}}_{N,\varepsilon}^{\frac{1}{2}}(f^{n},g^{n})\tilde{\mathcal{D}}_{N,\varepsilon}^{\frac{1}{2}}(G^{n+1})\\
&+\mathcal{E}_{N}^{\frac{1}{2}}(G^{n},\phi^{n})\mathcal{E}_{N}^{\frac{1}{2}}(G^{n+1},\phi^{n+1})\tilde{\mathcal{D}}_{N,\varepsilon}^{\frac{1}{2}}(G^{n+1},\phi^{n+1})+\mathcal{E}_{N}(G^{n},\phi^{n})\tilde{\mathcal{D}}_{N,\varepsilon}^{\frac{1}{2}}(G^{n+1}).
\end{aligned}
\end{equation}

Next we need to control the terms $\| \nabla_v (\mathbb{I}-\mathbb{P})G^{n+1} \|_{H^{N-1}_x L^2_v}$ in \eqref{Iter-Spatial-Bnd}. By applying the microscopic projection $(I-P_{1}),(I-P_{2})$ to the $f^{n+1}$, $g^{n+1}$-equation of \eqref{Iter-Approx-Syst}, respectively , we have device the microscopic evolution equation
	\begin{equation}\label{2.81}
	  \begin{aligned}
	     \partial_t (I-P_{1}) f^{n+1} + \frac{1}{\varepsilon^2} \mathcal{L}_{1} (I-P_{1}) f^{n+1} =&-\frac{1}{\varepsilon}(I-P_{1})(v\cdot\nabla_{x}f^{n+1})+(I-P_{1})(g^{n+1}v\cdot\nabla_{x}\phi^{n})\\
&-(I-P_{1})(\nabla_{x}\phi^{n}\cdot\nabla_{v}(I-P_{2})g^{n+1})+\frac{1}{\varepsilon}Q(f^{n},f^{n}),
	  \end{aligned}
	\end{equation}
\begin{equation}\label{2.82}
	  \begin{aligned}
	     \partial_t (I-P_{2}) g^{n+1}  + \frac{1}{\varepsilon^2}\mathcal{L}_{2} (I-P_{2}) g^{n+1}=&-\frac{1}{\varepsilon}(I-P_{2})(v\cdot\nabla_{x}g^{n+1})+(I-P_{2})(f^{n+1}v\cdot\nabla_{x}\phi^{n})\\
&-(I-P_{2})(\nabla_{x}\phi^{n}\cdot\nabla_{v}f^{n+1})+\frac{1}{\varepsilon}Q(g^{n},f^{n}),
	  \end{aligned}
	\end{equation}
	where we make use of the relations
	$\nabla_{v}P_{2}g=\nabla_{v}d(t,x)= (I-P_{2})(v\cdot\nabla_{x}\phi^{n})=0.$

	For any fixed $\alpha \,, \beta \in  \mathbb{N}^3$ with $ \beta \neq 0$ and $|\alpha|+|\beta|\leq N (N\geq 4)$, we first take mixed derivative operator $ \partial_x^\alpha \partial_v^\beta$ on \eqref{2.81} and \eqref{2.82}, taking $L^2_{x,v}$-inner product by dot with $\partial_x^\alpha \partial_v^\beta (I-P_{1}) f^{n+1}$ and $\partial_x^\alpha \partial_v^\beta (I-P_{2}) g^{n+1}$, respectively, integrating by parts over $\mathbb{R}^3 \times \mathbb{R}^3 $ and sum up over $f^{n+1}$ and $g^{n+1}$. Then, there exist two constants $\delta_1, \delta_2 > 0$ such that
	\begin{equation}\label{2.9}
	  \begin{aligned}
	    & \frac{1}{2} \frac{d}{d t} \| \partial_x^\alpha \partial_v^\beta (\mathbb{I}-\mathbb{P})G^{n+1} \|^2_{L^2_{x,v}}+ \frac{2\delta_1}{\varepsilon^2} \| \partial_x^\alpha \partial_v^\beta (\mathbb{I}-\mathbb{P}) G^{n+1} \|_{L^2_{x,v}(\nu)}^2 - \frac{\delta_2}{\varepsilon^2} \sum_{\tilde{\beta}< \beta} \| \partial_x^\alpha \partial_v^{\tilde{\beta}} (\mathbb{I}-\mathbb{P})G^{n+1} \|_{L^2_{x,v}}^2\\
&\lesssim\underbrace{-\frac{1}{\varepsilon}\l\partial_{x}^{\alpha}\partial_{v}^{\beta}(I-P_{1})(v\cdot\nabla_{x}f^{n+1}),\partial_{x}^{\alpha}\partial_{v}^{\beta}(I-P_{1})f^{n+1}\r_{L^{2}_{x,v}}}_{S_{1}}\\
&\underbrace{-\frac{1}{\varepsilon}\l\partial_{x}^{\alpha}\partial_{v}^{\beta}(v\cdot\nabla_{x}g^{n+1}),\partial_{x}^{\alpha}\partial_{v}^{\beta}(I-P_{2})g^{n+1}\r_{L^{2}_{x,v}}}_{S_{2}}\\
&+\underbrace{\l\partial_{x}^{\alpha}\partial_{v}^{\beta}(I-P_{1})(g^{n+1}v\cdot\nabla_{x}\phi^{n}-\nabla_{x}\phi^{n}\cdot\nabla_{v}(I-P_{2})g^{n+1}),\partial_{x}^{\alpha}\partial_{v}^{\beta}(I-P_{1})f^{n+1}\r_{L^{2}_{x,v}}}_{S_{3}}\\
&+\underbrace{\l\partial_{x}^{\alpha}\partial_{v}^{\beta}(f^{n+1}v\cdot\nabla_{x}\phi^{n}-\nabla_{x}\phi^{n}\cdot\nabla_{v}f^{n+1}),\partial_{x}^{\alpha}\partial_{v}^{\beta}(I-P_{2})g^{n+1}\r_{L^{2}_{x,v}}}_{S_{4}}\\
&+\underbrace{\frac{1}{\varepsilon}\l\partial_{x}^{\alpha}\partial_{v}^{\beta}Q(f^{n},f^{n}),\partial_{x}^{\alpha}\partial_{v}^{\beta}(I-P_{1})f^{n+1}\r_{L^{2}_{x,v}}}_{S_{5}}
+\underbrace{\frac{1}{\varepsilon}\l\partial_{x}^{\alpha}\partial_{v}^{\beta}Q(g^{n}, f^{n}),\partial_{x}^{\alpha}\partial_{v}^{\beta}(I-P_{2})g^{n+1}\r_{L^{2}_{x,v}}}_{S_{6}},
	  \end{aligned}
	\end{equation}
	where Lemma \ref{Lmm-Coercivity-L} are used.

	Now we estimate terms $S_i (1 \leq i \leq 6)$ in \eqref{2.9}. We divided the term $S_{1}$ into three parts as follow
	\begin{equation*}
	  \begin{split} S_{1}&=\underbrace{\frac{1}{\varepsilon}\l\partial_{x}^{\alpha}\partial_{v}^{\beta}P_{1}(v\cdot\nabla_{x}f^{n+1}), \partial_{x}^{\alpha}\partial_{v}^{\beta}(I-P_{1})f^{n+1}}_{S_{11}}
\underbrace{-\frac{1}{\varepsilon}\l\partial_{x}^{\alpha}\partial_{v}^{\beta}(v\cdot\nabla_{x}P_{1}f^{n+1}),\partial_{x}^{\alpha}\partial_{v}^{\beta}(I-P_{1})f^{n+1}\r_{L^2_{x,v}}}_{S_{12}}\\
&\underbrace{-\frac{1}{\varepsilon}\l\partial_{x}^{\alpha}\partial_{v}^{\beta}(v\cdot\nabla_{x}(I-P_{1})f^{n+1}), \partial_{x}^{\alpha}\partial_{v}^{\beta}(I-P_{1})f^{n+1}\r_{L^2_{x,v}}}_{S_{13}}.
	  \end{split}
	\end{equation*}

We can estimate term $S_{11}$ by H\"older inequality, $\|\partial_{v}^{\beta}P_{1}f\|_{L^{2}_{x,v}(\nu)}\lesssim\|f\|_{L^{2}_{x,v}}$, $|\beta|\geq 1$, Young inequality, $0<\varepsilon\leq 1$ and $\nu(v) \sim 1+|v|$
\begin{equation*}
\begin{aligned}
S_{11}&=\frac{1}{\varepsilon}\l\partial_{x}^{\alpha}\partial_{v}^{\beta}P_{1}(v\cdot\nabla_{x}P_{1}f^{n+1}),\partial_{x}^{\alpha}\partial_{v}^{\beta}(I-P_{1})f^{n+1}\r_{L^{2}_{x,v}}+\frac{1}{\varepsilon}\l\partial_{x}^{\alpha}\partial_{v}^{\beta}P_{1}(v\cdot\nabla_{x}(I-P_{1})f^{n+1}),\partial_{x}^{\alpha}\partial_{v}^{\beta}(I-P_{1})f^{n+1}\r_{L^{2}_{x,v}}\\
&\lesssim\frac{1}{\varepsilon}\|\nabla_{x}\partial_{x}^{\alpha}(I-P_{1})f^{n+1}\|_{L^{2}_{x,v}(\nu)}\|\partial_{x}^{\alpha}\partial_{v}^{\beta}(I-P_{1})f^{n+1}\|_{L^{2}_{x,v}(\nu)}+\frac{1}{\varepsilon}\|\nabla_{x}\partial_{x}^{\alpha}f^{n+1}\|_{L^{2}_{x,v}}\|\partial_{x}^{\alpha}\partial_{v}^{\beta}(I-P_{1})f^{n+1}\|_{L^{2}_{x,v}(\nu)}\\
&\leq\frac{\delta_{1}}{16\varepsilon^{2}}\|\partial_{x}^{\alpha}\partial_{v}^{\beta}(I-P_{1})f^{n+1}\|_{L^{2}_{x,v}(\nu)}^{2}+\frac{\delta}{16\varepsilon^{2}}\|\partial_{x}^{\alpha+1}(I-P_{1})f^{n+1}\|_{L^{2}_{x,v}(\nu)}^{2}+C\mathcal{E}_{N}^{\frac{1}{2}}(G^{n+1},\phi^{n+1})\tilde{\mathcal{D}}_{N,\varepsilon}^{\frac{1}{2}}(G^{n+1}).
\end{aligned}
\end{equation*}

Analogously, we can also estimate $S_{12}, S_{13}$ following
\begin{equation*}
\begin{aligned}
S_{12}&=-\frac{1}{\varepsilon}\l v\cdot\nabla_{x}\partial_{x}^{\alpha}\partial_{v}^{\beta}P_{1}f^{n+1},\partial_{x}^{\alpha}\partial_{v}^{\beta}(I-P_{1})f^{n+1}\r_{L^{2}_{x,v}}\\
&-\frac{1}{\varepsilon}\l \nabla_{x}\partial_{x}^{\alpha}\partial_{v}^{\beta-1}P_{1}f^{n+1},\partial_{x}^{\alpha}\partial_{v}^{\beta}(I-P_{1})f^{n+1}\r_{L^{2}_{x,v}}\lesssim\mathcal{E}_{N}^{\frac{1}{2}}(G^{n+1},\phi^{n+1})\tilde{\mathcal{D}}_{N,\varepsilon}^{\frac{1}{2}}(G^{n+1}),\\
S_{13}&=-\frac{1}{\varepsilon}\l\nabla_{x}\partial_{x}^{\alpha}\partial_{v}^{\beta-1}(I-P_{1})f^{n+1},\partial_{x}^{\alpha}\partial_{v}^{\beta}(I-P_{1})f^{n+1}\r_{L^{2}_{x,v}}\\
&\lesssim\frac{1}{\varepsilon}\|f^{n+1}\|_{H_{x}^{N}L^{2}_{v}}\|(I-P_{1})f^{n+1}\|_{H^{N}_{x,v}(\nu)}\lesssim\mathcal{E}_{N}^{\frac{1}{2}}(G^{n+1},\phi^{n+1})\tilde{\mathcal{D}}_{N,\varepsilon}^{\frac{1}{2}}(G^{n+1}),
\end{aligned}
\end{equation*}
where we use $\l v\cdot\nabla_{x}\partial_{x}^{\alpha}\partial_{v}^{\beta}(I-P_{1})f^{n+1},\partial_{x}^{\alpha}\partial_{v}^{\beta}(I-P_{1})f^{n+1}\r_{L^{2}_{x,v}}=0$ and $\nu(v)\sim1+|v|$. Therefore, we have estimate $S_{1}$ as
\begin{equation}\label{S1}
\begin{aligned}
S_{1}&-\frac{\delta_{1}}{16\varepsilon^{2}}\|\partial_{x}^{\alpha}\partial_{v}^{\beta}(I-P_{1})f^{n+1}\|_{L^{2}_{x,v}(\nu)}^{2}-\frac{\delta}{16\varepsilon^{2}}\|\partial_{x}^{\alpha+1}(I-P_{1})f^{n+1}\|_{L^{2}_{x,v}(\nu)}^{2}\\
&\lesssim\mathcal{E}_{N}^{\frac{1}{2}}(G^{n+1},\phi^{n+1})\tilde{\mathcal{D}}_{N,\varepsilon}^{\frac{1}{2}}(G^{n+1}).
\end{aligned}
\end{equation}

By the similar argument as $S_{1}$, we can estimate $S_{2}$ as
\begin{equation}\label{S2}
\begin{aligned}
S_{2}&-\frac{\delta_{1}}{16\varepsilon^{2}}\|\partial_{x}^{\alpha}\partial_{v}^{\beta}(I-P_{2})g^{n+1}\|_{L^{2}_{x,v}(\nu)}^{2}-\frac{\delta}{16\varepsilon^{2}}\|\partial_{x}^{\alpha+1}(I-P_{2})g^{n+1}\|_{L^{2}_{x,v}(\nu)}^{2}\\
&\lesssim\mathcal{E}_{N}^{\frac{1}{2}}(G^{n+1},\phi^{n+1})\tilde{\mathcal{D}}_{N,\varepsilon}^{\frac{1}{2}}(G^{n+1}).
\end{aligned}
\end{equation}

Next, we estimate $S_{3}$,
\begin{equation*}
\begin{aligned}
S_{3}&=\underbrace{\l\partial_{x}^{\alpha}\partial_{v}^{\beta}(g^{n+1}v\cdot\nabla_{x}\phi^{n}),\partial_{x}^{\alpha}\partial_{v}^{\beta}(I-P_{1})f^{n+1}\r_{L^{2}_{x,v}}}_{S_{31}}\\
&\underbrace{-\l\partial_{x}^{\alpha}\partial_{v}^{\beta}(\nabla_{x}\phi^{n}\cdot\nabla_{v}(I-P_{2})g^{n+1}),\partial_{x}^{\alpha}\partial_{v}^{\beta}(I-P_{1})f^{n+1}\r_{L^{2}_{x,v}}}_{S_{32}}\\
&\underbrace{-\l\partial_{x}^{\alpha}\partial_{v}^{\beta}P_{1}(g^{n+1}v\cdot\nabla_{x}\phi^{n}),\partial_{x}^{\alpha}\partial_{v}^{\beta}(I-P_{1})f^{n+1}\r_{L^{2}_{x,v}}}_{S_{33}}\\
&\underbrace{+\l\partial_{x}^{\alpha}\partial_{v}^{\beta}P_{1}(\nabla_{x}\phi^{n}\cdot\nabla_{v}(I-P_{2})g^{n+1}),\partial_{x}^{\alpha}\partial_{v}^{\beta}(I-P_{1})f^{n+1}\r_{L^{2}_{x,v}}}_{S_{34}}.
\end{aligned}
\end{equation*}

Then, we estimate $S_{3i}(i=1,2,3,4)$ one by one
\begin{equation*}
\begin{aligned}
S_{31}&=\underbrace{\l v\cdot\nabla_{x}\phi^{n}\cdot\partial_{x}^{\alpha}\partial_{v}^{\beta}(I-P_{2})g^{n+1},\partial_{x}^{\alpha}\partial_{v}^{\beta}(I-P_{1})f^{n+1}\r_{L^{2}_{x,v}}}_{S_{311}}\\
&+\underbrace{\l\partial_{x}^{\alpha}\partial_{v}^{\beta}(P_{2}g^{n+1}v\cdot\nabla_{x}\phi^{n}),\partial_{x}^{\alpha}\partial_{v}^{\beta}(I-P_{1})f^{n+1}\r_{L^{2}_{x,v}}}_{S_{312}}\\
&+\underbrace{\sum_{0\neq\tilde{\alpha}\leq\alpha}\l v\cdot\nabla_{x}\partial_{x}^{\tilde{\alpha}}\phi^{n}\cdot\partial_{x}^{\alpha-\tilde{\alpha}}\partial_{v}^{\beta}(I-P_{2})g^{n+1},\partial_{x}^{\alpha}\partial_{v}^{\beta}(I-P_{1})f^{n+1}\r_{L^{2}_{x,v}}}_{S_{313}}\\
&+\underbrace{\sum_{0\neq\tilde{\alpha}\leq\alpha}\l \nabla_{x}\partial_{x}^{\tilde{\alpha}}\phi^{n}\cdot\partial_{x}^{\alpha-\tilde{\alpha}}\partial_{v}^{\beta-1}(I-P_{2})g^{n+1},\partial_{x}^{\alpha}\partial_{v}^{\beta}(I-P_{1})f^{n+1}\r_{L^{2}_{x,v}}}_{S_{314}}.
\end{aligned}
\end{equation*}

We estimate $S_{311}$ by H\"older inequality, Sobolev embedding inequality $H_{x}^{2}\hookrightarrow L^{\infty}_{x}$ and $\nu(v)\sim1+|v|$ as
\begin{equation*}
\begin{aligned}
S_{311}&\lesssim\|\nabla_{x}\phi^{n}\|_{H^{N}_{x}}\|(I-P_{2})g^{n+1}\|_{H_{x,v}(\nu)}\|(I-P_{2})f^{n+1}\|_{H_{x,v}(\nu)}\\
&\lesssim\varepsilon^{2}\mathcal{E}_{N}^{\frac{1}{2}}(G^{n},\phi^{n})\tilde{\mathcal{D}}_{N,\varepsilon}(G^{n+1}).
\end{aligned}
\end{equation*}

If $|\beta|=1$, then $S_{312}$ can be control as
\begin{equation*}
\begin{aligned}
S_{312}&=\sum_{\tilde{\alpha}\leq\alpha}\l\nabla_{x}\partial_{x}^{\tilde{\alpha}}\phi^{n}\cdot\partial_{x}^{\alpha-\tilde{\alpha}}P_{2}g^{n+1},\partial_{x}^{\alpha}\partial_{v}^{\beta}(I-P_{1})f^{n+1}\r_{L^{2}_{x,v}}\\
&\lesssim\sum_{\tilde{\alpha}\leq\alpha-2}\|\nabla_{x}\partial_{x}^{\tilde{\alpha}}\phi^{n}\|_{L^{\infty}_{x}}\|\partial_{x}^{\alpha-\tilde{\alpha}}g^{n+1}\|_{L^{2}_{x,v}}\|(I-P_{1})f^{n+1}\|_{H^{2}_{x,v}}\\
&+\sum_{\alpha-1\leq\tilde{\alpha}\leq\alpha}\|\nabla_{x}\partial_{x}^{\tilde{\alpha}}\phi^{n}\|_{L^{4}_{x}}\|\partial_{x}^{\alpha-\tilde{\alpha}}g^{n+1}\|_{L^{4}_{x}L^{2}_{v}}\|(I-P_{1})f^{n+1}\|_{H^{2}_{x,v}}\\
&\lesssim\varepsilon\mathcal{E}_{N}^{\frac{1}{2}}(G^{n},\phi^{n})\mathcal{E}_{N}^{\frac{1}{2}}(G^{n+1},\phi^{n+1})\tilde{\mathcal{D}}_{N,\varepsilon}^{\frac{1}{2}}(G^{n+1}),
\end{aligned}
\end{equation*}
where we use $\|P_{2}g\|_{L_{x}^{2}}\lesssim\|g\|_{L^{2}_{x,v}}$ and Sobolev imbedding inequality $H_{x}^{2}\hookrightarrow L^{\infty}_{x}, H_{x}^{1}\hookrightarrow L^{4}_{x}$.

If $|\beta|\geq2$, then $S_{312}=0$.

The terms $S_{313}, S_{314}$ can be estimated as
\begin{equation*}
\begin{aligned}
S_{313}&\lesssim\sum_{0\neq\tilde{\alpha}\leq\alpha-2}\|\nabla_{x}\partial_{x}^{\tilde{\alpha}}\phi^{n}\|_{L^{\infty}_{x}}\|\partial_{x}^{\alpha-\tilde{\alpha}}\partial_{v}^{\beta}(I-P_{2})g^{n+1}\|_{L^{2}_{x,v}(\nu)}\|(I-P_{1})f^{n+1}\|_{H^{N}_{x,v}(\nu)}\\
&+\sum_{\alpha-1\leq\tilde{\alpha}\leq\alpha-2}\|\nabla_{x}\partial_{x}^{\tilde{\alpha}}\phi^{n}\|_{L^{4}_{x}}\|\partial_{x}^{\alpha-\tilde{\alpha}}\partial_{v}^{\beta}(I-P_{2})g^{n+1}\|_{L^{4}_{x}L^{2}_{v}(\nu(v))}\|(I-P_{1})f^{n+1}\|_{H^{N}_{x,v}(\nu)}\\
&\lesssim\varepsilon^{2}\mathcal{E}_{N}^{\frac{1}{2}}(f^{n},g^{n},\phi^{n})\tilde{\mathcal{D}}_{N,\varepsilon}(f^{n+1},g^{n+1}),\\
S_{314}&\lesssim\sum_{0\neq\tilde{\alpha}\leq\alpha-2}\|\nabla_{x}\partial_{x}^{\tilde{\alpha}}\phi^{n}\|_{L^{\infty}_{x}}\|\partial_{x}^{\alpha-\tilde{\alpha}}\partial_{v}^{\beta-1}(I-P_{2})g^{n+1}\|_{L^{2}_{x,v}}\|(I-P_{1})f^{n+1}\|_{H^{N}_{x,v}(\nu)}\\
&+\sum_{\alpha-1\leq\tilde{\alpha}\leq\alpha-2}\|\nabla_{x}\partial_{x}^{\tilde{\alpha}}\phi^{n}\|_{L^{4}_{x}}\|\partial_{x}^{\alpha-\tilde{\alpha}}\partial_{v}^{\beta-1}(I-P_{2})g^{n+1}\|_{L^{4}_{x}L^{2}_{v}}\|(I-P_{1})f^{n+1}\|_{H^{N}_{x,v}(\nu)}\\
&\lesssim\varepsilon^{2}\mathcal{E}_{N}^{\frac{1}{2}}(G^{n},\phi^{n})\tilde{\mathcal{D}}_{N,\varepsilon}(G^{n+1}),\\
\end{aligned}
\end{equation*}
where we use $\nu(v)\sim1+|v|$.

In summary, we have
\begin{equation*}
\begin{aligned}
S_{31}&\lesssim\mathcal{E}_{N}^{\frac{1}{2}}(G^{n},\phi^{n})\mathcal{E}_{N}^{\frac{1}{2}}(G^{n+1},\phi^{n+1})\tilde{\mathcal{D}}_{N,\varepsilon}^{\frac{1}{2}}(G^{n+1})+\mathcal{E}_{N}^{\frac{1}{2}}(G^{n},\phi^{n})\tilde{\mathcal{D}}_{N,\varepsilon}(G^{n+1}),
\end{aligned}
\end{equation*}
where $0<\varepsilon\leq 1$ be used.

Analogously, we can also estimate $S_{32}$ that
\begin{equation*}
S_{32}\lesssim\mathcal{E}_{N}^{\frac{1}{2}}(G^{n},\phi^{n})\mathcal{E}_{N}^{\frac{1}{2}}(G^{n+1},\phi^{n+1})\tilde{\mathcal{D}}_{N,\varepsilon}^{\frac{1}{2}}(G^{n+1})+\mathcal{E}_{N}^{\frac{1}{2}}(G^{n},\phi^{n})\tilde{\mathcal{D}}_{N,\varepsilon}(G^{n+1}).
\end{equation*}


Therefore, the term $S_{3}$ can be control by
\begin{equation}\label{S3}
\begin{aligned}
S_{3}&\lesssim\mathcal{E}_{N}^{\frac{1}{2}}(G^{n},\phi^{n})\mathcal{E}_{N}^{\frac{1}{2}}(G^{n+1},\phi^{n+1})\tilde{\mathcal{D}}_{N,\varepsilon}^{\frac{1}{2}}(G^{n+1})+\mathcal{E}_{N}^{\frac{1}{2}}(G^{n},\phi^{n})\tilde{\mathcal{D}}_{N,\varepsilon}(G^{n+1}).
\end{aligned}
\end{equation}

We estimate $S_{4}$ by the same argument as $S_{3}$ that
\begin{equation}\label{S4}
\begin{aligned}
S_{4}&\lesssim\mathcal{E}_{N}^{\frac{1}{2}}(G^{n},\phi^{n})\mathcal{E}_{N}^{\frac{1}{2}}(G^{n+1},\phi^{n+1})\tilde{\mathcal{D}}_{N,\varepsilon}^{\frac{1}{2}}(G^{n+1})+\mathcal{E}_{N}^{\frac{1}{2}}(G^{n},\phi^{n})\tilde{\mathcal{D}}_{N,\varepsilon}(G^{n+1}).
\end{aligned}
\end{equation}

For the term $S_{5}$, we divide it into four parts that
\begin{equation*}
\begin{aligned}
S_{5}&=\underbrace{\frac{1}{\varepsilon}\l\partial_{x}^{\alpha}\partial_{v}^{\beta}Q(P_{1}f^{n},P_{1}f^{n}),\partial_{x}^{\alpha}\partial_{v}^{\beta}(I-P_{1})f^{n+1}\r_{L^{2}_{x,v}}}_{S_{51}}\\
&+\underbrace{\frac{1}{\varepsilon}\l\partial_{x}^{\alpha}\partial_{v}^{\beta}Q(P_{1}f^{n},(I-P_{1})f^{n}),\partial_{x}^{\alpha}\partial_{v}^{\beta}(I-P_{1})f^{n+1}\r_{L^{2}_{x,v}}}_{S_{52}}\\
&+\underbrace{\frac{1}{\varepsilon}\l\partial_{x}^{\alpha}\partial_{v}^{\beta}Q((I-P_{1})f^{n},P_{1}f^{n}),\partial_{x}^{\alpha}\partial_{v}^{\beta}(I-P_{1})f^{n+1}\r_{L^{2}_{x,v}}}_{S_{53}}\\
&+\underbrace{\frac{1}{\varepsilon}\l\partial_{x}^{\alpha}\partial_{v}^{\beta}Q((I-P_{1})f^{n},(I-P_{1})f^{n}),\partial_{x}^{\alpha}\partial_{v}^{\beta}(I-P_{1})f^{n+1}\r_{L^{2}_{x,v}}}_{S_{54}}.
\end{aligned}
\end{equation*}

Applying Lemma \ref{Lmm-Q} and $\|\partial_{x}^{\alpha}\partial_{v}^{\beta}P_{1}f\|_{L_{x,v}(\nu(v))}\lesssim\|\partial_{x}^{\alpha}f\|_{L^{2}_{x,v}}$, we have estimate $S_{51}$ as
\begin{equation*}
	  \begin{aligned}
	    S_{51} &\lesssim\|P_{1}f^{n}\|_{H^{N}_{x,v}}\|P_{1}f^{n}\|_{H_{x,v}^{N}(\nu(v))}\|(I-P_{1})f^{n+1}\|_{H^{N}_{x,v}(\nu)}\lesssim\|f^{n}\|_{H^{N}_{x}L_{v}^{2}}\|(I-P_{1})f^{n+1}\|_{H^{N}_{x,v}(\nu)}\\
&\lesssim\mathcal{E}_{N}(G^{n},\phi^{n})\tilde{\mathcal{D}}_{N,\varepsilon}^{\frac{1}{2}}(G^{n+1}).
	  \end{aligned}
	\end{equation*}

For the term $S_{52},S_{53}$ and $S_{54}$, we gain the estimate by the same argument as $S_{51}$
\begin{equation*}
	  \begin{aligned} &S_{52}+S_{53}+S_{54}\lesssim\frac{1}{\varepsilon}\|f^{n}\|_{H^{N}_{x}L_{v}^{2}}\|(I-P_{1})f^{n}\|_{H^{N}_{x,v}(\nu)}\|(I-P_{1})f^{n+1}\|_{H^{N}_{x,v}(\nu)}\\
&+\frac{1}{\varepsilon}\|(I-P_{1})f^{n}\|_{H^{N}_{x,v}}\|(I-P_{1})f^{n}\|_{H^{N}_{x,v}(\nu)}\|(I-P_{1})f^{n+1}\|_{H^{N}_{x,v}(\nu)}\\
&\lesssim\varepsilon\mathcal{E}_{N}^{\frac{1}{2}}(G^{n},\phi^{n})\tilde{\mathcal{D}}_{N,\varepsilon}^{\frac{1}{2}}(f^{n},g^{n})\tilde{\mathcal{D}}_{N,\varepsilon}^{\frac{1}{2}}(G^{n+1}).
	  \end{aligned}
\end{equation*}

Consequently, we estimate $S_{5}$ that
\begin{equation}\label{S5}
\begin{aligned}
S_{5}&\lesssim\mathcal{E}_{N}(G^{n},\phi^{n})\tilde{\mathcal{D}}_{N,\varepsilon}^{\frac{1}{2}}(G^{n+1})+\mathcal{E}_{N}^{\frac{1}{2}}(G^{n},\phi^{n})\tilde{\mathcal{D}}_{N,\varepsilon}^{\frac{1}{2}}(G^{n})\tilde{\mathcal{D}}_{N,\varepsilon}^{\frac{1}{2}}(G^{n+1}).
\end{aligned}
\end{equation}

Analogously, the term $S_{6}$ can be control by
\begin{equation}\label{S6}
\begin{aligned}
S_{6}&\lesssim\mathcal{E}_{N}(G^{n},\phi^{n})\tilde{\mathcal{D}}_{N,\varepsilon}^{\frac{1}{2}}(G^{n+1})+\mathcal{E}_{N}^{\frac{1}{2}}(G^{n},\phi^{n})\tilde{\mathcal{D}}_{N,\varepsilon}^{\frac{1}{2}}(G^{n})\tilde{\mathcal{D}}_{N,\varepsilon}^{\frac{1}{2}}(G^{n+1}).
\end{aligned}
\end{equation}

From the estimates \eqref{S1}, \eqref{S2}, \eqref{S3}, \eqref{S4}, \eqref{S5}, \eqref{S6} and \eqref{2.9}, we deduce that
	\begin{equation}\label{2.1.1}
	  \begin{aligned}
	    & \frac{1}{2}\frac{d}{d t}\| \partial_x^\alpha \partial_v^\beta (\mathbb{I}-\mathbb{P})G^{n+1} \|^2_{L^2_{x,v}}+\frac{\delta_1}{\varepsilon^2} \| \partial_x^\alpha \partial_v^\beta (\mathbb{I}-\mathbb{P}) G^{n+1} \|_{L^2_{x,v} (\nu)}^2
-\frac{\delta_{2}}{\varepsilon^2} \sum_{\tilde{\beta}< \beta}\| \partial^\alpha_x \partial^{\tilde{\beta}}_v (\mathbb{I}-\mathbb{P}) G^{n+1} \|^2_{L^2_{x,v} (\nu)}\\
&-\frac{\delta}{8\varepsilon^{2}}\|\partial_{x}^{\alpha+1}(\mathbb{I}-\mathbb{P})G^{n+1}\|_{L^{2}_{x,v}(\nu)}^{2}\lesssim\mathcal{E}_{N}^{\frac{1}{2}}(G^{n+1},\phi^{n+1})\tilde{\mathcal{D}}_{N,\varepsilon}^{\frac{1}{2}}(G^{n+1})+\mathcal{E}_{N}^{\frac{1}{2}}(G^{n},\phi^{n})\tilde{\mathcal{D}}_{N,\varepsilon}(G^{n+1})\\
&+\mathcal{E}_{N}(G^{n},\phi^{n})\tilde{\mathcal{D}}_{N,\varepsilon}^{\frac{1}{2}}(G^{n+1})+\mathcal{E}_{N}^{\frac{1}{2}}(G^{n},\phi^{n})\tilde{\mathcal{D}}_{N,\varepsilon}^{\frac{1}{2}}(G^{n})\tilde{\mathcal{D}}_{N,\varepsilon}^{\frac{1}{2}}(G^{n+1})\\
&+\mathcal{E}_{N}^{\frac{1}{2}}(G^{n},\phi^{n})\mathcal{E}_{N}^{\frac{1}{2}}(G^{n+1},\phi^{n+1})\tilde{\mathcal{D}}_{N,\varepsilon}^{\frac{1}{2}}(G^{n+1}).
\end{aligned}
\end{equation}

Combining \eqref{Iter-Spatial-Bnd} and \eqref{2.1.1}, we can derive from the induction of $0 \leq |\beta| = k \leq N$ that there exist some positive constants $a_{|\beta|}$, $b_k$ and $c_k$ such that
	\begin{equation}\label{2.20}
	  \begin{aligned}
& \frac{1}{2} \frac{d}{d t} \big( \| G^{n+1} \|^2_{H^N_x L^2_v} + \| \nabla_x \phi^{n+1} \|^2_{H^N_x} \big)+\frac{1}{2}\frac{d}{d t}\sum_{\substack{ |\alpha| + |\beta| \leq N \\ 1 \leq |\beta| \leq k }} c_{|\beta|}  \| \partial_x^\alpha \partial_v^\beta (\mathbb{I}-\mathbb{P})G^{n+1} \|^2_{L^2_{x,v}}+ \frac{b_k}{\varepsilon^2} \| (\mathbb{I}-\mathbb{P}) G^{n+1} \|^2_{H^N_x L^2_v (\nu)}\\
&+ \frac{c_k}{\varepsilon^2} \sum_{\substack{ |\alpha| + |\beta| \leq N \\ 1 \leq |\beta| \leq k }}\| \partial_x^\alpha \partial_v^\beta (\mathbb{I}-\mathbb{P}) G^{n+1} \|^2_{L^2_{x,v} (\nu)} \lesssim\mathcal{E}_{N}^{\frac{1}{2}}(G^{n+1},\phi^{n+1})\tilde{\mathcal{D}}_{N,\varepsilon}^{\frac{1}{2}}(G^{n+1})+\mathcal{E}_{N}^{\frac{1}{2}}(G^{n},\phi^{n})\tilde{\mathcal{D}}_{N,\varepsilon}(G^{n+1})\\
&+\mathcal{E}_{N}^{\frac{1}{2}}(G^{n},\phi^{n})\mathcal{E}_{N}(G^{n+1},\phi^{n+1})+\mathcal{E}_{N}(G^{n},\phi^{n})\tilde{\mathcal{D}}_{N,\varepsilon}^{\frac{1}{2}}(G^{n+1})+\mathcal{E}_{N}^{\frac{1}{2}}(G^{n},\phi^{n})\tilde{\mathcal{D}}_{N,\varepsilon}^{\frac{1}{2}}(G^{n})\tilde{\mathcal{D}}_{N,\varepsilon}^{\frac{1}{2}}(G^{n+1})\\
&+\mathcal{E}_{N}^{\frac{1}{2}}(G^{n},\phi^{n})\mathcal{E}_{N}^{\frac{1}{2}}(G^{n+1},\phi^{n+1})\tilde{\mathcal{D}}_{N,\varepsilon}^{\frac{1}{2}}(G^{n+1}),
	  \end{aligned}
	\end{equation}
	for all $0 \leq |\beta| = k \leq N$.

Then, there exist some positive numbers  $a_{N}=a(N, c_{|\beta|})>0$, $b_{N}=b(N, b_{k})>0$, $c_{N}=c(N, c_{k})>0$ such that we can define the so-called instant iterating energy $\mathscr{E}_{N,\varepsilon} (G, \phi )$ that
	\begin{equation}
	  \begin{split}
	    \mathscr{E}_{N,\varepsilon} (G, \phi) &=  \| G \|^2_{H^N_x L^2_v} + \| \nabla_x \phi \|^2_{H^N_x} + a_{N}\| (\mathbb{I}-\mathbb{P}) G\|^2_{H^{N}_{x,v}}+  \frac{b_{N}}{\varepsilon^2} \int_0^t \| (\mathbb{I}-\mathbb{P}) G \|^2_{H^N_x L^2_v (\nu)} d \xi \\
&+ \frac{ c_{N}}{\varepsilon^2} \int_0^t \| (\mathbb{I}-\mathbb{P}) G \|^2_{H^{N}_{x,v} (\nu)}d\xi \,,
	  \end{split}
	\end{equation}
     and the initial energy $\mathscr{E}_{N} ( G_0, \phi_0 )$ is given as
	\begin{equation}
	  \begin{aligned}
	    \mathscr{E}_{N} ( G_{0}, \phi_0 ) =  \| G_{0} \|^2_{H^N_x L^2_v}+ \| \nabla_x \phi_{0} \|^2_{H^N_x} + a_{N}\| (\mathbb{I}-\mathbb{P}) G_{0} \|^2_{H^{N}_{x,v}}\,.
	  \end{aligned}
	\end{equation}

	It is easy to derive that
	\begin{equation*}
	  \begin{aligned}
	    \mathscr{E}_{N, \varepsilon} (G, \phi) \thicksim \mathcal{E}_N (G, \phi) + \int_0^t  \tilde{\mathcal{D}}_{N,\varepsilon}(G)d \xi \,.
	  \end{aligned}
	\end{equation*}
	
Now we claim that there exist two small positive numbers $\tau, T \in (0, 1]$, independent of $\varepsilon$, such that if $0 < T \leq 1 $, $ \mathscr{E}_N (G_{\varepsilon,0}, \phi_{\varepsilon,0}) \leq C_\# \mathcal{E}_N (G_{\varepsilon,0}, \phi_{\varepsilon,0}) \leq C_\# \delta \leq \tau $ for some constant $C_\# > 0$ and
	$$\sup_{0 \leq t \leq T} \mathscr{E}_{N,\varepsilon} (G^{n}(t), \phi^n (t)) \leq 2 \tau \,, $$
	then, we gain
	\begin{equation}\label{Claim-Unif-Bnd}
	  \begin{aligned}
	    \sup_{0 \leq t \leq T} \mathscr{E}_{N,\varepsilon} (G^{n+1} (t), \phi^{n+1} (t)) \leq 2 \tau \,.
	  \end{aligned}
	\end{equation}

	Indeed, from taking $k=N$ in the inequality \eqref{2.20} and integrating over $[0,t]$, we derive that for all $t \in [0, T]$
\begin{equation*}
\begin{aligned}
&\int_{0}^{t}\mathcal{E}_{N}^{\frac{1}{2}}(G^{n+1}(\xi),\phi^{n+1}(\xi))\tilde{\mathcal{D}}_{N,\varepsilon}^{\frac{1}{2}}(G^{n+1}(\xi))d\xi\lesssim\sup_{0 \leq t \leq T}\mathcal{E}_{N}^{\frac{1}{2}}(G^{n+1}(t),\phi^{n+1}(t))\sqrt{T}(\int_{0}^{t}\tilde{\mathcal{D}}_{N,\varepsilon}(G^{n+1}(\xi))d\xi)^{\frac{1}{2}}\\
&\lesssim\sqrt{T}\sup_{0 \leq t \leq T}\mathscr{E}_{N, \varepsilon} (G^{n+1} (t), \phi^{n+1} (t)),\\
\end{aligned}
\end{equation*}

\begin{equation*}
\begin{aligned}
&\int_{0}^{t}\mathcal{E}_{N}^{\frac{1}{2}}(G^{n}(\xi),\phi^{n}(\xi))\tilde{\mathcal{D}}_{N,\varepsilon}(G^{n+1}(\xi))d\xi\lesssim\sup_{0\leq t\leq T}\mathcal{E}_{N}^{\frac{1}{2}}(G^{n}(t),\phi^{n}(t))\int_{0}^{t}\tilde{\mathcal{D}}_{N,\varepsilon}(G^{n+1}(\xi))d\xi\\
&\lesssim \sup_{0\leq t\leq T}\mathscr{E}_{N, \varepsilon}^{\frac{1}{2}} (G^{n} (t), \phi^{n} (t))\sup_{0\leq t\leq T}\mathscr{E}_{N, \varepsilon} (G^{n+1} (t), \phi^{n+1} (t))\lesssim 2\tau\sup_{0\leq t\leq T}\mathscr{E}_{N, \varepsilon} (G^{n+1} (t), \phi^{n+1} (t)),
\end{aligned}
\end{equation*}

\begin{equation*}
\begin{aligned}
&\int_{0}^{t}\mathcal{E}_{N}^{\frac{1}{2}}(G^{n}(\xi),\phi^{n}(\xi))\mathcal{E}_{N,\varepsilon}(G^{n+1}(\xi),\phi^{n+1}(\xi))d\xi\lesssim \sup_{0\leq t\leq T}\mathcal{E}_{N}(G^{n}(t),\phi^{n}(t))\int_{0}^{t}\mathcal{E}_{N}(G^{n+1}(\xi),\phi^{n+1}(\xi))d\xi\\
&\lesssim T\sup_{0\leq t\leq T}\mathscr{E}_{N, \varepsilon}^{\frac{1}{2}} (G^{n} (t), \phi^{n} (t))\sup_{0\leq t\leq T}\mathscr{E}_{N, \varepsilon} (G^{n+1} (t), \phi^{n+1} (t))\lesssim\sqrt{2\tau}T\sup_{0\leq t\leq T}\mathscr{E}_{N, \varepsilon} (G^{n+1} (t), \phi^{n+1} (t)),
\end{aligned}
\end{equation*}

\begin{equation*}
\begin{aligned}
&\int_{0}^{t}\mathcal{E}_{N}(G^{n}(\xi),\phi^{n}(\xi))\tilde{\mathcal{D}}^{\frac{1}{2}}_{N,\varepsilon}(G^{n+1}(\xi))d\xi\lesssim\sqrt{T}\sup_{0\leq t\leq T}\mathscr{E}_{N,\varepsilon}(G^{n}(t),\phi^{n}(t))\sup_{0\leq t\leq T}\mathscr{E}_{N,\varepsilon}^{\frac{1}{2}}(G^{n+1}(t),\phi^{n+1}(t))\\
&\lesssim\sqrt{T}\sup_{0\leq t\leq T}\mathscr{E}_{N,\varepsilon}^{2}(G^{n}(t),\phi^{n}(t))+\sqrt{T}\sup_{0\leq t\leq T}\mathscr{E}_{N,\varepsilon}(G^{n+1}(t),\phi^{n+1}(t))\\
&\lesssim \sqrt{T}(2\tau)^{2}+\sqrt{T}\sup_{0\leq t\leq T}\mathscr{E}_{N,\varepsilon}(G^{n+1}(t),\phi^{n+1}(t)),
\end{aligned}
\end{equation*}

\begin{equation*}
\begin{aligned}
&\int_{0}^{t}\mathcal{E}_{N}^{\frac{1}{2}}(G^{n}(\xi),\phi^{n}(\xi))\tilde{\mathcal{D}}_{N,\varepsilon}^{\frac{1}{2}}(G^{n}(\xi))\tilde{\mathcal{D}}_{N,\varepsilon}^{\frac{1}{2}}(G^{n+1}(\xi))d\xi\lesssim\sup_{0\leq t\leq T}\mathscr{E}_{N,\varepsilon}(G^{n}(t),\phi^{n}(t))\sup_{0\leq t\leq T}\mathscr{E}_{N,\varepsilon}^{\frac{1}{2}}(G^{n+1}(t),\phi^{n+1}(t))\\
&\leq C\sup_{0\leq t\leq T}\mathscr{E}_{N,\varepsilon}^{2}(G^{n}(t),\phi^{n}(t))+\frac{1}{16}\sup_{0\leq t\leq T}\mathscr{E}_{N,\varepsilon}(G^{n+1}(t),\phi^{n+1}(t))\leq C(2\tau)^{2}+\frac{1}{16}\sup_{0\leq t\leq T}\mathscr{E}_{N,\varepsilon}(G^{n+1}(t),\phi^{n+1}(t)),
\end{aligned}
\end{equation*}

\begin{equation*}
\begin{aligned}
&\int_{0}^{t}\mathcal{E}_{N}^{\frac{1}{2}}(G^{n}(\xi),\phi^{n}(\xi))\mathcal{E}_{N}^{\frac{1}{2}}(G^{n+1}(\xi),\phi^{n+1}(\xi))\tilde{\mathcal{D}}_{N,\varepsilon}^{\frac{1}{2}}(G^{n+1}(\xi))d\xi\lesssim\sqrt{T}\sup_{0\leq t\leq T}\mathscr{E}^{\frac{1}{2}}_{N,\varepsilon}(G^{n}(t),\phi^{n}(t))\\
&\cdot\sup_{0\leq t\leq T}\mathscr{E}_{N,\varepsilon}(G^{n+1}(t),\phi^{n+1}(t))\lesssim\sqrt{2T\tau}\sup_{0\leq t\leq T}\mathscr{E}_{N,\varepsilon}(G^{n+1}(t),\phi^{n+1}(t)).
\end{aligned}
\end{equation*}
where we use the Young inequality and the H\"older inequality. In summary, we have for some positive constant $C$ that
\begin{equation}
	  \begin{aligned}
	    \big[ \frac{15}{16} - C(\sqrt{T}+2\tau+\sqrt{2\tau}T+\sqrt{2\tau T}) \big] \sup_{0 \leq t \leq T} \mathscr{E}_{N, \varepsilon} (G^{n+1} (t) , \phi^{n+1} (t) ) \leq \tau + 4C ( 1 + \sqrt{T} ) \tau^2 \,.
	  \end{aligned}
\end{equation}

	Therefore, we take $T \in (0,1]$ and $\tau \in (0,1]$ some small such that $ \frac{15}{16} - C(\sqrt{T}+2\tau+\sqrt{2\tau}T+\sqrt{2\tau T}) \geq \frac{5}{8} $ and $\tau + 4C ( 1 + \sqrt{T} ) \tau^2\leq\frac{5}{4}\tau$.  Consequently, it is very easy to derive that
	\begin{equation}
	  \begin{aligned}
	    \sup_{0 \leq t \leq T} \mathscr{E}_{N,\varepsilon} (G^{n+1} (t) , \phi^{n+1} (t) ) \leq 2\tau,
	  \end{aligned}
	\end{equation}
	which immediately implies \eqref{Claim-Unif-Bnd} holds.
	
	Consequently,  it is easy to obtain a solution $(g_{\varepsilon}^{+} ,g_{\varepsilon}^{-}, \phi_{\varepsilon} )$ to \eqref{VPB-g}-\eqref{f's intial} for any fixed $0 < \varepsilon \leq 1$ by employing the standard compactness arguments if we take $n \rightarrow \infty$. Moreover, the uniform bound \eqref{Claim-Unif-Bnd} implies the energy bound \eqref{2.3}. Since $f_{\varepsilon}^{0,\pm}\geq0$, from a simple induction over $n$, we deduce $f_{\varepsilon}^{n,\pm}\geq0$. This implies $f_{\varepsilon}^{\pm}\geq0$. Thus, we have completed the proof of Lemma \ref{Lmm-Local}.
\end{proof}

\section{Uniform estimates with $\varepsilon$ and global-in-time solutions}\label{Sec: Uniform-Bnd}

In this section, we will extend the local solution in Lemma \ref{Lmm-Local} to a global-in-time solution of  \eqref{VPB-g}-\eqref{f's intial} by deriving an energy estimates uniformly in $\varepsilon \in (0, 1]$ under small size of the initial data. For simplicity, we will drop the lower index $\varepsilon$ of $f_{\varepsilon},g_{\varepsilon} $ and $\phi_\varepsilon$ in the perturbed VPB system \eqref{VPB-g} that
\begin{equation}\label{VPB-g-drop-eps}
  \left\{
    \begin{array}{l}
 \partial_{t} f + \frac{1}{\varepsilon} v \cdot \nabla_x f + \frac{1}{\varepsilon^2} \mathcal{L}_{1}f - gv\cdot\nabla_{x}\phi+\nabla_{x}\phi\cdot\nabla_{v}g = \frac{1}{\varepsilon} Q (f , f ) \,, \\
 \partial_{t} g + \frac{1}{\varepsilon} v \cdot \nabla_{x} g +\frac{1}{\varepsilon^2} \mathcal{L}_{2} g - \frac{2}{\varepsilon} v \cdot \nabla_{x} \phi -  fv \cdot \nabla_{x} \phi + \nabla_x \phi\cdot \nabla_v f = \frac{1}{\varepsilon} Q ( g , f ) \,, \\
      \Delta_x \phi =\l g , 1 \r_{L^2_{v}} \,.
    \end{array}
  \right.
\end{equation}

\subsection{Pure spatial derivative estimates: kinetic dissipations}

In this subsection, we will consider the energy estimates on the pure spatial derivative of $G$.  We will prove the following lemma.
\begin{lemma}\label{Lmm-Spatial}
	Suppose that $(g_{\varepsilon}^{+}(t,x,v),g_{\varepsilon}^{-}(t,x,v),\phi_{\varepsilon}(t,x))$ is the solution to the perturbed VMB system \eqref{VPB-g}-\eqref{f's intial} constructed in Lemma \ref{Lmm-Local}. Then there exist two positive constants $\delta$ such that
	\begin{equation}\label{Spatial-Bnd}
	  \begin{aligned}
	    \frac{1}{2} \frac{d}{d t} \Big(\| G \|^2_{H^N_x L^2_v} +2\| \nabla_x \phi \|^2_{H^N_x} \Big) &+ \frac{\delta}{\varepsilon^2} \| (\mathbb{I}-\mathbb{P}) G \|^2_{H^N_x L^2_v (\nu)}\lesssim \mathcal{E}_N^\frac{1}{2} (G, \phi)  \mathcal{D}_{N,\varepsilon} (G),
	  \end{aligned}
	\end{equation}
	for all $0 < \varepsilon \leq 1$, where $\mathcal{E}_N (G, \phi)$, $\mathcal{D}_{N,\varepsilon} (G)$ are given in \eqref{Energy-E}, \eqref{Energy-D}.
\end{lemma}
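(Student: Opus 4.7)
The plan is to apply $\partial_x^\alpha$ for all multi-indices $|\alpha|\le N$ to both equations of \eqref{VPB-g-drop-eps}, take the $L^2_{x,v}$ inner product with $\partial_x^\alpha f$ and $\partial_x^\alpha g$ respectively, sum the two identities, and then sum over $|\alpha|\le N$. The coercivity statement in Lemma \ref{Lmm-Coercivity-L} (with $\beta=0$) will produce the dissipation $\frac{2\delta}{\varepsilon^2}\|\partial_x^\alpha(\mathbb I-\mathbb P)G\|_{L^2_{x,v}(\nu)}^2$, which after summing gives the left-hand side of \eqref{Spatial-Bnd}. The pure transport $\frac{1}{\varepsilon}v\cdot\nabla_x\partial_x^\alpha G$ vanishes against $\partial_x^\alpha G$ by a single integration by parts (the weight $v$ is $x$-independent), so no singularity is generated there.

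The key obstacle is the genuinely singular linear term $-\frac{2}{\varepsilon}v\cdot\nabla_x\phi$ in the $g$-equation, which after differentiation produces $-\frac{2}{\varepsilon}\langle v\cdot\nabla_x\partial_x^\alpha\phi,\partial_x^\alpha g\rangle_{L^2_{x,v}}$. I will absorb this into the time derivative of $\|\nabla_x\phi\|_{H^N_x}^2$ using the Poisson equation and the local mass conservation law in the fourth line of \eqref{a-b-c}. Concretely, integrate by parts in $x$ to move the gradient onto $\langle v,\partial_x^\alpha g\rangle_{L^2_v}$, use $\langle v,1\rangle_{L^2_v}=0$ so that the bracket equals $\langle v,\partial_x^\alpha(I-P_2)g\rangle_{L^2_v}$, then apply $\frac{1}{\varepsilon}\nabla_x\cdot\langle v,g\rangle_{L^2_v}=-\partial_t d=-\partial_t\Delta_x\phi$ to rewrite the whole quantity as $-\frac{d}{dt}\|\nabla_x\partial_x^\alpha\phi\|_{L^2_x}^2$; after transposition this explains the factor $2\|\nabla_x\phi\|_{H^N_x}^2$ appearing in \eqref{Spatial-Bnd}.

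The remaining contributions are the nonlinear Lorentz-type terms $\langle\partial_x^\alpha(gv\cdot\nabla_x\phi),\partial_x^\alpha f\rangle$, $\langle\partial_x^\alpha(\nabla_x\phi\cdot\nabla_v g),\partial_x^\alpha f\rangle$ and their symmetric partners with $f\leftrightarrow g$, plus the quadratic collision terms. For the Lorentz terms I will distribute derivatives by Leibniz, split each factor by $G=\mathbb P G+(\mathbb I-\mathbb P)G$, and bound the four resulting products just as was done for $D_1,D_2,D_3$ in the proof of Lemma \ref{Lmm-Local}, using $H^2_x\hookrightarrow L^\infty_x$, $H^1_x\hookrightarrow L^4_x$, Lemma \ref{P1f} to absorb velocity polynomials against $P_1 f$, and the pointwise bound $\nu(v)\sim 1+|v|$. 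The $\nabla_v g$ that falls onto $P_2 g$ produces zero since $P_2 g=d(t,x)$, while the one falling on $(I-P_2)g$ feeds the microscopic dissipation; every such piece is bounded by $\mathcal E_N^{1/2}(G,\phi)\,\mathcal D_{N,\varepsilon}(G)$ (thanks to the presence of at least one $\nabla_x$ on a macroscopic factor or one $(\mathbb I-\mathbb P)$ factor on a microscopic one, which matches $\mathcal D_{N,\varepsilon}$).

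For the quadratic collision terms $\frac{1}{\varepsilon}\langle\partial_x^\alpha Q(f,f),\partial_x^\alpha f\rangle$ and $\frac{1}{\varepsilon}\langle\partial_x^\alpha Q(g,f),\partial_x^\alpha g\rangle$, I use $Q(f,f)\in\mathcal N_1^\perp$ and $Q(g,f)\in\mathcal N_2^\perp$ to replace $\partial_x^\alpha f$ and $\partial_x^\alpha g$ on the right slot by $\partial_x^\alpha(I-P_1)f$ and $\partial_x^\alpha(I-P_2)g$. Then Lemma \ref{Lmm-Q} together with the macro/micro splitting of each input converts the $\frac{1}{\varepsilon}$ singularity into one factor of $\frac{1}{\varepsilon}\|(\mathbb I-\mathbb P)G\|_{H^N_x L^2_v(\nu)}$ (contributing to $\mathcal D_{N,\varepsilon}^{1/2}$) multiplied by $\mathcal E_N^{1/2}$ times either $\mathcal E_N^{1/2}$ or another $\mathcal D_{N,\varepsilon}^{1/2}$, the worst case still giving the right-hand bound $\mathcal E_N^{1/2}(G,\phi)\,\mathcal D_{N,\varepsilon}(G)$. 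Summing over $|\alpha|\le N$, choosing $\delta$ slightly smaller than the coercivity constant to absorb the one $\frac{1}{\varepsilon^2}$ term coming out of the Lorentz estimates, yields \eqref{Spatial-Bnd}.
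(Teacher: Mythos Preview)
Your overall strategy matches the paper's proof: differentiate, pair, use coercivity, cancel transport, absorb the singular $-\tfrac{2}{\varepsilon}v\cdot\nabla_x\phi$ into $\tfrac{d}{dt}\|\nabla_x\phi\|_{H^N_x}^2$ via the mass law and Poisson equation, and estimate the four nonlinear groups. That part is fine.

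There is, however, one point where your justification falls short of what the lemma actually claims. You want every nonlinear piece bounded by $\mathcal E_N^{1/2}\,\mathcal D_{N,\varepsilon}$, but for the \emph{pure macroscopic} interactions this does not follow from the $D_1,D_2,D_3$ estimates of Lemma~\ref{Lmm-Local}. In that lemma the macro--macro pieces were controlled by $\mathcal E_N^{1/2}\mathcal E_N$, which suffices for a local argument (small time absorbs it) but is useless here: $\mathcal E_N\,\mathcal D_{N,\varepsilon}^{1/2}$ is \emph{not} $\lesssim \mathcal E_N^{1/2}\mathcal D_{N,\varepsilon}$, and you cannot close the global continuation with it. Concretely, when $\alpha=0$ the terms
\[
\tfrac{1}{\varepsilon}\langle Q(P_1f,P_1f),(I-P_1)f\rangle_{L^2_{x,v}}
\quad\text{and}\quad
\langle v\cdot\nabla_x\phi,\;P_1f\cdot P_2g\rangle_{L^2_{x,v}}
\]
carry no $x$-derivative on the macroscopic factors, so your parenthetical ``thanks to the presence of at least one $\nabla_x$ on a macroscopic factor'' is unjustified. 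The paper fixes this by a Sobolev/H\"older splitting: for the collision term it writes $\|a^2+|b|^2+c^2\|_{L^2_x}\lesssim\|(a,b,c)\|_{L^6_x}\|(a,b,c)\|_{L^3_x}\lesssim\|\nabla_xP_1f\|_{L^2_xL^2_v}\|P_1f\|_{H^1_xL^2_v}$, and for the Lorentz term it uses $L^{3/2}_x\times L^6_x\times L^6_x$ so that \emph{both} macroscopic factors pick up a gradient via $\dot H^1\hookrightarrow L^6$. With this extra step each piece is genuinely $\lesssim\mathcal E_N^{1/2}\mathcal D_{N,\varepsilon}$; without it your ``worst case'' is $\mathcal E_N\,\mathcal D_{N,\varepsilon}^{1/2}$ and the inequality \eqref{Spatial-Bnd} does not follow. (Your last remark about absorbing a stray $\tfrac{1}{\varepsilon^2}$ term is also unnecessary: once the bound $\mathcal E_N^{1/2}\mathcal D_{N,\varepsilon}$ is in hand, nothing has to be absorbed into the coercivity.)
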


\begin{proof}[Proof of Lemma \ref{Lmm-Spatial}]
	For any fixed $\alpha \in \mathbb{N}^3$ with $ | \alpha | \leq N $,  we apply the derivative operator $ \partial_x^\alpha $ to the $f,g$-equation in \eqref{VPB-g-drop-eps} and take $L^2_{x,v}$-inner product in the above equation by dot with $ \partial_{x}^{\alpha} f, \partial_{x}^{\alpha}g$, respectively, integrate by parts over $\R^3_{x} \times \R_{v}^3$. We thereby obtain
	\begin{equation}\label{3.2.1}
	  \begin{aligned}
	     & \frac{1}{2} \frac{d}{d t} \big(\| \partial_x^\alpha G \|_{L^{2}_{x,v}}^{2}+2\|\nabla_{x}\partial_{x}^{\alpha}\phi\|_{L^{2}_{x}}^{2}\big) + \frac{2\delta}{\varepsilon^2}\|\partial_{x}^{\alpha}(\mathbb{I}-\mathbb{P})G\|^{2}_{L_{x,v}^{2}(\nu)} \lesssim\underbrace{\frac{1}{\varepsilon}\l\partial_{x}^{\alpha}Q(f,f),\partial_{x}^{\alpha}f\r_{L^{2}_{x,v}}}_{I_{1}}+\underbrace{\frac{1}{\varepsilon}\l\partial_{x}^{\alpha}Q(g,f),\partial_{x}^{\alpha}g\r_{L^{2}_{x,v}}}_{I_{2}}\\
&\underbrace{-\l\partial_{x}^{\alpha}(\nabla_{x}\phi\cdot\nabla_{v}f)\cdot\partial_{x}^{\alpha}g+\partial_{x}^{\alpha}(\nabla_{x}\phi\cdot\nabla_{v}g)\cdot\partial_{x}^{\alpha}f,1\r_{L^{2}_{x,v}}}_{I_{3}}+\underbrace{\l\partial_{x}^{\alpha}(fv\cdot\nabla_{x}\phi)\cdot\partial_{x}^{\alpha}g+\partial_{x}^{\alpha}(gv\cdot\nabla_{x}\phi)\cdot\partial_{x}^{\alpha}f,1\r_{L^{2}_{x,v}}}_{I_{4}},
	  \end{aligned}
	\end{equation}
where we use  Lemma \ref{Lmm-Coercivity-L}, $\l\partial_{x}^{\alpha}(v\cdot\nabla_{x}f),\partial_{x}^{\alpha}f\r_{L^{2}_{x,v}}=\l\partial_{x}^{\alpha}(v\cdot\nabla_{x}g),\partial_{x}^{\alpha}g\r_{L^{2}_{x,v}}=0$, the local conservation law of mass $ \partial_t \l g, 1 \r_{L^2_v} + \frac{1}{\varepsilon} \nabla_{x}\cdot\l g , v \r_{L^2_v} = 0$ and $ \Delta_x \phi =\l g , 1 \r_{L^2_v} $.
	
	Then, we estimate the terms $I_{i}(i=1,...,4)$  on the right-hand side of \eqref{3.2.1} one by one. We make use of  $f = P_{1}f+ (I-P_{1}) f$, so that $ I_1 $ is divided into
	\begin{equation}
	  \begin{aligned}
	   & I_{1} = \underbrace{ \frac{1}{\varepsilon} \l\partial_x^\alpha Q ( P_{1}f , P_{1}f ) , \partial_x^\alpha (I-P_{1})f \r_{L^2_{x,v}} }_{I_{11}} + \underbrace{\frac{1}{\varepsilon} \l \partial_x^\alpha Q ( P_{1}f , (I-P_{1})f ) , \partial_x^\alpha (I-P_{1})f\r_{L^2_{x,v}} }_{I_{12}} \\
	    &+ \underbrace{ \frac{1}{\varepsilon} \l \partial_x^\alpha Q ( (I-P_{1})f , P_{1}f) , \partial_x^\alpha (I-P_{1})f\r_{L^2_{x,v}} }_{I_{13}}+ \underbrace{ \frac{1}{\varepsilon} \l \partial_x^\alpha Q ( (I-P_{1})f, (I-P_{1})f) , \partial_x^\alpha (I-P_{1})f \r_{L^2_{x,v}} }_{I_{14}} ,
	  \end{aligned}
	\end{equation}
	where we also utilize the fact that $Q (f, f) \in \mathcal{N}_{1}^\perp$. Then, we estimate $I_{1i}(1\leq i\leq 4)$.  On the other hand, we plug $P_{1}f = a + b\cdot v + c|v|^2$ into the term $I_{11}$ to gain
	\begin{equation}
	  \begin{aligned}
	    & I_{11}=\frac{1}{\varepsilon}\l\partial^\alpha_x Q ( P_{1}f, P_{1}f), \partial_x^\alpha (I-P_{1})f \r_{L^2_{x,v}}\\
	    \lesssim & \frac{1}{\varepsilon}\sum_{\alpha_1 \leq \alpha} \| \, | \partial^{\alpha_1}_x a | \, | \partial^{\alpha - \alpha_1}_x a| + | \partial^{\alpha_1}_x b | \, | \partial^{\alpha - \alpha_1}_x b| + | \partial^{\alpha_1}_x c | \, | \partial^{\alpha - \alpha_1}_x c| \|_{L^2_x} \| \partial_x^\alpha (I-P_{1})f\|_{L^2_{x,v} (\nu)} \,.
	  \end{aligned}
	\end{equation}

	If $\alpha \neq 0$, the first factor is bounded by the calculus inequality
	\begin{equation*}
	  \begin{aligned}
	    &\Big( \| \nabla_x a \|_{H^{N-1}_x} + \| \nabla_x b \|_{H^{N-1}_x} + \| \nabla_x c \|_{H^{N-1}_x} \Big) \Big( \| a \|_{H^N_x} + \| b \|_{H^N_x} + \| c \|_{H^N_x}  \Big) \\
	    &\lesssim \| \nabla_x P_{1}f\|_{H^{N-1}_x L^2_v} \| P_{1}f \|_{H^N_x L^2_v} \lesssim \| \nabla_x P_{1}f\|_{H^{N-1}_x L^2_v} \|f\|_{H^N_x L^2_v} \,.
	  \end{aligned}
	\end{equation*}

	If $\alpha = 0$, the first factor is bounded by the H\"older inequality and the Sobolev inequality
	\begin{equation*}
	  \begin{aligned}
	    \| a+|b|+c\|_{L^2_x} \lesssim & \big( \| a\|_{L^6_x} + \| b\|_{L^6_x} + \| c \|_{L^6_x} \big) \big( \| a \|_{L^3_x} + \| b\|_{L^3_x} + \| c\|_{L^3_x} \big) \\
	    \lesssim & \big( \| \nabla_x a\|_{L^2_x} + \| \nabla_x b\|_{L^2_x} + \| \nabla_x c \|_{L^2_x} \big) \big( \|a \|_{H^1_x} + \|b\|_{H^1_x} + \| c\|_{H^1_x} \big) \\
	    \lesssim & \| \nabla_x P_{1}f\|_{H^{N-1}_x L^2_v} \|f\|_{H^N_x L^2_v}.
	  \end{aligned}
	\end{equation*}

	Consequently, we have
	\begin{equation}\label{I11}
	  \begin{aligned}
	    I_{11}\lesssim \frac{1}{\varepsilon} \| \nabla_x P_{1}f\|_{H^{N-1}_x L^2_v} \|f\|_{H^N_x L^2_v} \| \partial_x^\alpha (I-P_{1})f\|_{L^2_{x,v} (\nu)} \lesssim \mathcal{E}_N^\frac{1}{2} (G, \phi) \mathcal{D}_{N, \varepsilon} (G) \,,
	  \end{aligned}
	\end{equation}
	where the functionals $ \mathcal{E}_N (G, \phi) $ and $ \mathcal{D}_{N, \varepsilon} (G) $ are defined in \eqref{Energy-E} and \eqref{Energy-D}. Moreover, by employing the similar arguments in estimates \eqref{S4}, it is easy to derive
	\begin{equation}\no
	  \begin{aligned}
	    I_{12}+I_{13}+ I_{14} \lesssim & \frac{1}{\varepsilon} \| f \|_{H^N_x L^2_v} \| (I-P_{1})f \|^2_{H^N_x L^2_v (\nu)} \lesssim \varepsilon \mathcal{E}_N^\frac{1}{2} (G, \phi) \mathcal{D}_{N, \varepsilon} (G) \,.
	  \end{aligned}
	\end{equation}

Consequently, we have estimates that
	\begin{equation}\label{I1}
	  \begin{aligned}
	   I_1 \lesssim \mathcal{E}_N^\frac{1}{2} (G, \phi) \mathcal{D}_{N, \varepsilon} (G),
	  \end{aligned}
	\end{equation}
	for all $0 < \varepsilon \leq 1$. As for the term $I_{2}$, by
employing the similar arguments in \eqref{I1} and the decomposition $g = P_{2}g+ (I-P_{2}) g$ that
	\begin{equation}\label{I2}
I_2 \lesssim \mathcal{E}_N^\frac{1}{2} (G, \phi) \mathcal{D}_{N, \varepsilon} (G).
	\end{equation}

We divided $I_{3}$ into two parts that
	\begin{equation*}
	  \begin{aligned}   I_{3}&=\underbrace{-\l\nabla_{x}\phi\cdot\nabla_{v}\partial_{x}^{\alpha}f\cdot\partial_{x}^{\alpha}g+\nabla_{x}\phi\cdot\nabla_{v}\partial_{x}^{\alpha}g\cdot\partial_{x}^{\alpha}f,1\r_{L^{2}_{x,v}}}_{I_{31}}
\underbrace{-\sum_{0\neq\tilde{\alpha}\leq\alpha}\l\nabla_{x}\partial_{x}^{\tilde{\alpha}}\phi\cdot\nabla_{v}\partial_{x}^{\alpha-\tilde{\alpha}}f,\partial_{x}^{\alpha}g\r_{L^{2}_{x,v}}}_{I_{32}}\\
&\underbrace{-\sum_{0\neq\tilde{\alpha}\leq\alpha}\l\nabla_{x}\partial_{x}^{\tilde{\alpha}}\phi\cdot\nabla_{v}\partial_{x}^{\alpha-\tilde{\alpha}}g,\partial_{x}^{\alpha}f\r_{L^{2}_{x,v}}}_{I_{33}}.
	  \end{aligned}
	\end{equation*}

Then, we have
\begin{equation*}
\begin{aligned}
I_{31}=&-\l v\cdot\nabla_{x}\phi,\partial_{x}^{\alpha}f\cdot\partial_{x}^{\alpha}g\r_{L_{x,v}^{2}}=\underbrace{-\l v\cdot\nabla_{x}\phi,\partial_{x}^{\alpha}P_{1}f\cdot\partial_{x}^{\alpha}P_{2}g\r_{L^{2}_{x,v}}}_{I_{311}}
\underbrace{-\l v\cdot\nabla_{x}\phi,\partial_{x}^{\alpha}(I-P_{1})f\cdot\partial_{x}^{\alpha}P_{2}g\r_{L^{2}_{x,v}}}_{I_{312}}\\
&\underbrace{-\l v\cdot\nabla_{x}\phi,\partial_{x}^{\alpha}P_{1}f\cdot\partial_{x}^{\alpha}(I-P_{2})g\r_{L^{2}_{x,v}}}_{I_{313}}
\underbrace{-\l v\cdot\nabla_{x}\phi,\partial_{x}^{\alpha}(I-P_{1})f\cdot\partial_{x}^{\alpha}(I-P_{2})g\r_{L^{2}_{x,v}}}_{I_{314}},
\end{aligned}
\end{equation*}
where $G=\mathbb{P}G+(\mathbb{I}-\mathbb{P})G$.

Next, we estimate $I_{311}$ to $I_{314}$ one by one. If $\alpha = 0$, the term $I_{311}$ is bounded by
	\begin{equation*}
	  \begin{aligned}
	    I_{311}&\lesssim\| \nabla_x \phi \|_{L^{\frac{3}{2}}_{x}} \|P_{1}f \|_{L^{6}_{x} L^{2}_{v}} \| P_{2}g \|_{L^{6}_{x}}\|v\|_{L^{2}_{v}}\lesssim \|\nabla_{x}\phi\|_{H^{N}_{x}}\|\nabla_{x}P_{1}f\|_{H^{N-1}_{x}L^{2}_{v}}\|\nabla_{x}P_{2}g\|_{H^{N-1}_{x}}\lesssim\mathcal{E}_{N}^{\frac{1}{2}}(G,\phi)\mathcal{D}_{N,\varepsilon}(G),
	  \end{aligned}
	\end{equation*}
where we use the Sobolev embedding inequality $\|f\|_{L^{6}_{x}(\mathbb{R}^{3})}\lesssim\|\nabla_{x}f\|_{L^{2}_{x}(\mathbb{R}^{3})}$, $P_{2}g=d(t,x)$, $H^{1}_{x}\hookrightarrow L^{\frac{3}{2}}_{x}$ and $N\geq 4$. If $\alpha \neq 0$, the term $I_{311}$ is bounded by
	\begin{equation}\no
	  \begin{aligned}
	    I_{311}&\lesssim\| \nabla_{x} \phi \|_{L^{\infty}_{x}} \|\partial^{\alpha}_{x} P_{1}f\|_{L^{2}_{x,v}} \| \partial^{\alpha}_{x}P_{2}g \|_{L^2_{x}}\|v\|_{L^{2}_{v}}\lesssim \| \nabla_{x} \phi \|_{H^{N}_{x}} \| \nabla_{x} P_{1}f \|_{H^{N-1}_{x} L^{2}_{v}} \|\nabla_{x}P_{2}g \|_{H^{N-1}_{x}} \lesssim \mathcal{E}_N^\frac{1}{2} (G, \phi) \mathcal{D}_{N, \varepsilon} (G) \,,
	  \end{aligned}
	\end{equation}
	where \eqref{nu} and the Sobolev embedding $H^2_x \hookrightarrow L^\infty_x$ are also used. In summary, we obtain
	\begin{equation*}
	  \begin{aligned}
	    I_{311} \lesssim \mathcal{E}_{N}^{\frac{1}{2}} (G, \phi) \mathcal{D}_{N, \varepsilon} (G) \,.
	  \end{aligned}
	\end{equation*}

If $\alpha= 0$, we can estimate $I_{312}$ as follow
\begin{equation*}
\begin{aligned}
I_{312}&\lesssim\|\nabla_{x}\phi\|_{L^{3}_{x}}\|(I-P_{1})f\|_{L^{2}_{x,v}(\nu)}\|P_{2}g\|_{L^{6}_{x}}\|\sqrt{v}\|_{L^{2}_{v}}\lesssim\|\nabla_{x}\phi\|_{H^{N}_{x}}\|(I-P_{1})f\|_{H^{N}_{x}L^{2}_{v}(\nu)}\|\nabla_{x}P_{2}g\|_{H^{N-1}_{x}}\\
&\lesssim\varepsilon\mathcal{E}_{N}^{\frac{1}{2}} (G, \phi) \mathcal{D}_{N, \varepsilon} (G)
\end{aligned}
\end{equation*}
where \eqref{nu}, $\|f\|_{L^{6}_{x}(\mathbb{R}^{3})}\lesssim\|\nabla_{x}f\|_{L^{2}_{x}(\mathbb{R}^{3})}$, $P_{2}g=d(t,x)$ and the Sobolev embedding $H^{1}_{x} \hookrightarrow L^{3}_{x}$ are also used. If $\alpha\neq 0$, we can estimate $I_{312}$ as follow
\begin{equation*}
\begin{aligned}
I_{312}&\lesssim\|\nabla_{x}\phi\|_{L^{\infty}_{x}}\|\partial_{x}^{\alpha}(I-P_{1})f\|_{L^{2}_{x,v}(\nu)}\|\partial_{x}P_{2}g\|_{L_{x}^{2}}\|\sqrt{v}\|_{L^{2}_{v}}\\
&\lesssim\|\nabla_{x}\phi\|_{H^{N}_{x}}\|(I-P_{1})f\|_{H^{N}_{x}L^{2}_{v}(\nu)}\|\nabla_{x}P_{2}g\|_{H^{N-1}_{x}}\lesssim\varepsilon\mathcal{E}_{N}^{\frac{1}{2}} (G, \phi) \mathcal{D}_{N, \varepsilon} (G),
\end{aligned}
\end{equation*}
where we mark use of \eqref{nu}, $P_{2}g=d(t,x)$ and the Sobolev embedding $H^{2}_{x} \hookrightarrow L^{\infty}_{x}$. In summary, we have
	\begin{equation*}
	  \begin{aligned}
	    I_{312} \lesssim \varepsilon\mathcal{E}_{N}^{\frac{1}{2}} (G, \phi) \mathcal{D}_{N, \varepsilon} (G) \,.
	  \end{aligned}
	\end{equation*}

The analogous arguments of $I_{312}$ tells us that $I_{313}$ and $I_{314}$ are bounded by
	\begin{equation}\no
	  \begin{aligned}
	    I_{313}+I_{314}\lesssim \varepsilon\mathcal{E}_{N}^{\frac{1}{2}} (G, \phi) \mathcal{D}_{N, \varepsilon} (G)\,.
	  \end{aligned}
	\end{equation}

As a result, we have
\begin{equation}\label{I31}
\begin{aligned}
I_{31}\lesssim\mathcal{E}_{N}^{\frac{1}{2}} (G, \phi) \mathcal{D}_{N, \varepsilon} (G),
\end{aligned}
\end{equation}
where we mark use of $0<\varepsilon\leq 1$. We make use of $G=\mathbb{P}G+(\mathbb{I}-\mathbb{P})G$ and Sobolev imbedding inequality, the terms $I_{32}$ and $I_{33}$ can be estimate as
	\begin{equation}
	  \begin{aligned}
I_{32}+I_{33}\lesssim\mathcal{E}_{N}^{\frac{1}{2}} (G, \phi) \mathcal{D}_{N, \varepsilon} (G).
	  \end{aligned}
	\end{equation}

In consequence, we have
\begin{equation}\label{I3}
I_{3}\lesssim\mathcal{E}_{N}^{\frac{1}{2}} (G, \phi) \mathcal{D}_{N, \varepsilon} (G).
\end{equation}

It is remain to control the term $I_{4}$. The analogous arguments of $I_{312}$ tell us the $I_{4}$ can be bounded by
\begin{equation}\label{I4}
\begin{aligned}
I_{4}\lesssim\mathcal{E}_{N}^{\frac{1}{2}} (G, \phi) \mathcal{D}_{N, \varepsilon} (G).
\end{aligned}
\end{equation}

Consequently, we have the bound \eqref{Spatial-Bnd} by plugging all estimates \eqref{I1}, \eqref{I2}, \eqref{I3}, \eqref{I4} into \eqref{3.2.1} and summing up for all $|\alpha| \leq N$. Then we complete the proof of Lemma \ref{Lmm-Spatial}.
\end{proof}

\subsection{Macroscopic energy estimates: fluid dissipations}

In this subsection, we will find a dissipative structure of the fluid part $P_{1}f$ by using the so-called micro-macro decomposition method for the two-species VPB system, depending on the so-called {\em thirteen moments} (\cite{GY-06}). More specifically, we will obtain the evolution equation of each coefficient $(a,b,c)$ of $P_{1}f$, where the basis $\{e_k\}_{k=1}^{13} \subseteq L^2_v$ of the coefficients consisting of
\begin{equation}\label{e 13}
  \big\{ 1, v_i, v_i^2, v_i |v|^2 \ (i=1,2,3) ; v_i v_j \ (1 \leq i < j \leq 3) \big\} \subseteq L^2_v \,.
\end{equation}

Let $\{e_k^*\}_{k=1}^{13}$ be a set of orthonormal basis in $L^{2}_{v}$ such that
\begin{equation*}
  \l e_k^* , e_j^* \r_{L^2_v} = \delta_{jk} \,, \ j,k=1,2\cdots, 13 \,,
\end{equation*}
where $\delta_{jk} = 0$ if $j \neq k$ and $\delta_{jk} = 1$ if $j=k$. Actually, $\{e_k^*\}_{k=1}^{13}$ is a given linear combination of \eqref{e 13}. From \eqref{decomposition1}, we have:
\begin{align}
  1 : & \qquad \varepsilon \partial_t a =l_a + h_a + m_a \,, \label{Coef-Evl-a}\\
  v_i : & \qquad \varepsilon \partial_t b_i + \partial_i a = l_{bi}+ h_{bi} + m_{bi} \label{Coef-Evl-b}\,, \\
  v_i^2 : & \qquad \varepsilon \partial_t c + \partial_i b_i= l_i + h_i + m_i  \,,\label{3.3.8} \\
  v_i v_j : & \qquad \partial_i b_j+ \partial_j b_i = l_{ij} + h_{ij}+ m_{ij} \quad (i \neq j) \,, \label{3.3.7} \\
  v_i |v|^2 : & \qquad \partial_i c = l_{ci} + h_{ci} + m_{ci} \,, \label{3.3.9}
\end{align}
where all terms on the right-hand side above are the coefficients of $l$, $h$, $m$, defined in \eqref{l+h+m}, in terms of the basis \eqref{e 13}.

As for $g$,  we have
 \begin{equation}
\varepsilon\partial_{t}d+v\cdot\nabla_{x}d=\hat{l}+\hat{h}+\hat{m},
\end{equation}
where $\hat{l},\hat{h},\hat{m}$ are defined in \eqref{decomposition2}.

Our goal is to find a macroscopic dissipation. The high order derivatives of the fluid coefficients $(a(t,x),b(t,x),c(t,x))$ and $d(t,x)$ are dissipative from the balance laws \eqref{a-b-c} and \eqref{Coef-Evl-a}-\eqref{3.3.9}, Which is similar to the case of the Boltzmann equation.  More specifically, we give the following lemma.

\begin{lemma}\label{Lmm-MM-Decomp}
Suppose $(g_{\varepsilon}^{+}(t,x,v), g_{\varepsilon}^{-}(t,x,v),\phi_{\varepsilon}(t,x))$ is the solution of the perturbed VPB system \eqref{VPB-g}-\eqref{f's intial} constructed in Lemma \ref{Lmm-Local}. Then, we have
	\begin{equation}\label{MM-Inq}
	  \begin{split}
\varepsilon \frac{d}{d t} E^{int}_N (G) + \| \nabla_x b\|^2_{H^{N-1}_x} &+ \| \nabla_x c\|^2_{H^{N-1}_x} + \| \nabla_x (a+3c) \|^2_{H^{N-1}_x} +\|\nabla_{x}d\|_{H^{N-1}_{x}}^{2} \lesssim\mathcal{E}_N^\frac{1}{2} (G, \phi) \mathcal{D}_{N, \varepsilon} (G)+\mathcal{E}_{N}^\frac{1}{2} (G, \phi) \mathcal{D}^\frac{1}{2}_{N,\varepsilon}(G),
	  \end{split}
	\end{equation}
	for all $0 < \varepsilon\leq 1$ and $N \geq 4$, where the so-called interactive energy functional $E_N^{int} (G)$ is given by
	\begin{equation}\label{Interactive-Energy}
	  \begin{aligned}
	    E_N^{int} (G)= \sum_{|\alpha| \leq N-1} \Bigg\{&-\sum_{i,j=1}^{3} \l\partial^\alpha_x (I-P_{1}) f , \partial^\alpha_{x}\nabla_{x}\cdot b\zeta_i (v) \r_{L^2_{x,v}}\\
&+17\sum_{i=1}^{3}\l\partial_{i}\partial_{x}^{\alpha}(I-P_{1})f,\partial_{x}^{\alpha}c\zeta_{i}(v)\r_{L^{2}_{x,v}} +\l\partial_{x}^{\alpha}(I-P_{2})g,v\cdot\nabla_{x}\partial_{x}^{\alpha}d\r_{L^{2}_{x,v}} \Bigg\} \,,
	  \end{aligned}
	\end{equation}
	where $\zeta_i(v), \zeta_{ij}(v)$ are some linear combinations of the basis \eqref{e 13}.
\end{lemma}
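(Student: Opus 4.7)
The plan is to implement the thirteen-moment macro-micro decomposition of \cite{GY-06}, applied here simultaneously to $P_1 f = a + b \cdot v + c|v|^2$ and $P_2 g = d$. The scheme has three stages: convert the coefficient balance laws \eqref{Coef-Evl-a}--\eqref{3.3.9} (obtained by testing \eqref{decomposition1} against the basis \eqref{e 13}) and the $d$-equation \eqref{decomposition2} into elliptic-type identities for $\nabla_x b$, $\nabla_x c$, $\nabla_x d$ and $\nabla_x (a+3c)$; test these identities against the corresponding macroscopic coefficient to generate the dissipations on the LHS of \eqref{MM-Inq}; and estimate the source terms coming from $l,h,m$ and $\hat l,\hat h,\hat m$.

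Concretely, combining $\partial_j$ of \eqref{3.3.7} with $\partial_i$ of \eqref{3.3.8} to eliminate $\partial_t c$ yields an identity of the schematic form $-\Delta_x b_i - \tfrac{1}{3}\partial_i(\nabla_x\cdot b)=\sum_j\partial_j\langle l+h+m,\zeta_{ij}\rangle_{L^2_v}+\textrm{l.o.t.}$, which, tested against $\partial^\alpha_x b_i$, produces $\|\nabla_x b\|^2_{H^{N-1}_x}$ by elliptic coercivity. Applying $\nabla_x\cdot$ to \eqref{3.3.9} gives $-\Delta_x c=\sum_i\partial_i\langle l+h+m,\zeta_i\rangle_{L^2_v}$, and testing with $\partial^\alpha_x c$ produces $\|\nabla_x c\|^2_{H^{N-1}_x}$. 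Since $P_2 g=d$ is $v$-independent, the $\langle\cdot,v\rangle_{L^2_v}$-moment of \eqref{decomposition2} kills $\varepsilon\partial_t d$ and leaves $\nabla_x d=\langle\hat l+\hat h+\hat m,v\rangle_{L^2_v}$; testing with $\partial^\alpha_x\nabla_x d$ yields $\|\nabla_x d\|^2_{H^{N-1}_x}$. Finally, the momentum law in \eqref{a-b-c} reads $\tfrac{1}{\varepsilon}\nabla_x(a+5c)=-\partial_t b-\tfrac{1}{\varepsilon}\langle v\cdot\nabla_x(I-P_1)f,v\rangle_{L^2_v}+d\nabla_x\phi$; combining this with the already-obtained $\nabla_x c$ bound produces the $\|\nabla_x(a+3c)\|^2_{H^{N-1}_x}$ piece.

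The terms $-\varepsilon\partial_t(I-P_1)f$ inside $l$ and $-\varepsilon\partial_t(I-P_2)g$ inside $\hat l$ cannot be bounded pointwise in $t$. Transferring the time derivative outside the inner product via $\partial_t\langle u,w\rangle=\langle\partial_t u,w\rangle+\langle u,\partial_t w\rangle$ reproduces exactly the three contributions to \eqref{Interactive-Energy} with the prefactor $\varepsilon\tfrac{d}{dt}E_N^{int}(G)$; the leftover pieces, in which $\partial_t$ has been moved onto the macroscopic coefficients $\nabla_x\cdot b$, $c$, $d$, are rewritten using the balance laws \eqref{a-b-c} into spatial gradients of $(\mathbb{I}-\mathbb{P})G$ plus electric terms, bounded by $\mathcal{E}_N^{1/2}\mathcal{D}_{N,\varepsilon}$. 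The remaining microscopic pieces $-\tfrac{1}{\varepsilon}\mathcal{L}_1(I-P_1)f$ and $-v\cdot\nabla_x(I-P_1)f$ in $l$ (and their $\hat l$ analogues), after pairing, are controlled by $\mathcal{D}_{N,\varepsilon}^{1/2}\|\nabla_x\mathbb{P}G\|_{H^{N-1}_xL^2_v}\lesssim \mathcal{E}_N^{1/2}\mathcal{D}_{N,\varepsilon}^{1/2}$. The nonlinear sources $h,\hat h$ are handled by Lemma \ref{Lmm-Q}, and the electric sources $m,\hat m$ via $H^2_x\hookrightarrow L^\infty_x$ and $H^1_x\hookrightarrow L^6_x$ as in the proof of Lemma \ref{Lmm-Spatial}.

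The most delicate point is the second stage: pinning down the precise linear combinations $\zeta_i,\zeta_{ij}$ in the span of \eqref{e 13} so that the elliptic test produces strictly positive coercivity (yielding in particular the specific coefficient $17$ appearing in \eqref{Interactive-Energy}), and cleanly isolating the cross-species coupling $d\nabla_x\phi$ in the momentum balance so that it can be absorbed via the Poisson identity $\Delta_x\phi=d$ and integration by parts. A secondary obstacle, deferred to Remark \ref{Rmk-MM}, is checking $|E_N^{int}(G)|\lesssim\mathcal{E}_N(G,\phi)$ with a small constant, so that \eqref{MM-Inq} can later be combined with Lemma \ref{Lmm-Spatial} and the mixed $(x,v)$-derivative estimates to close the full uniform energy inequality.
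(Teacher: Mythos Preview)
Your proposal is correct and follows essentially the same thirteen-moment macro--micro route as the paper: derive elliptic identities for $b$ and $c$ from \eqref{3.3.7}--\eqref{3.3.9}, a gradient identity for $d$ from the $v$-moment of \eqref{decomposition2}, obtain $\|\nabla_x(a+3c)\|^2_{H^{N-1}_x}$ from the divergence of the momentum balance in \eqref{a-b-c}, and pull the $\varepsilon\partial_t$ pieces of $l,\hat l$ outside to form $E_N^{int}(G)$, replacing the residual $\partial_t b,\partial_t c,\partial_t d$ via \eqref{a-b-c}. One simplification relative to what you flag as delicate: the cross-species term $d\nabla_x\phi$ in the momentum balance carries an explicit factor $\varepsilon$ (see $R_4$ in the paper), so it is bounded directly by $\varepsilon\,\mathcal{E}_N^{1/2}\mathcal{D}_{N,\varepsilon}$ via H\"older and Sobolev without any recourse to the Poisson identity $\Delta_x\phi=d$.
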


\begin{remark}\label{Rmk-MM}
	We have notice that
	\begin{equation}\label{Interactive-Energy-Bnd}
	  \begin{aligned}
	    |E_{N}^{int} (G)| \lesssim\|G \|^{2}_{H^{N}_{x} L^{2}_{v}} \lesssim \mathcal{E}_{N} (G, \phi),
	  \end{aligned}
	\end{equation}
	and
	\begin{equation}\no
	  \begin{aligned}
	   \| \nabla_x b\|^{2}_{H^{N-1}_{x}} + \| \nabla_{x} c\|^2_{H^{N-1}_x} + \| \nabla_{x} (a+3c) \|^{2}_{H^{N-1}_{x}} + \|\nabla_{x}d\|^{2}_{H^{N-1}_{x}}\thicksim\| \nabla_{x}\mathbb{P}G\|^{2}_{H^{N-1}_{x} L^{2}_{v}} \,.
	  \end{aligned}
	\end{equation}
	Then the inequality \eqref{MM-Inq} can be simplified as
	\begin{equation}\label{MM-Inq-Simple}
	  \begin{aligned}
	    c_{0} \varepsilon \frac{d}{d t} E^{int}_N (G) + \| \nabla_{x}\mathbb{P}G\|^2_{H^{N-1}_x L^2_v} &\lesssim \mathcal{E}_N^\frac{1}{2} (G, \phi) \mathcal{D}_{N, \varepsilon} (G)+\mathcal{E}_N^\frac{1}{2} (G, \phi) \mathcal{D}_{N,\varepsilon}^\frac{1}{2}(G),
	  \end{aligned}
	\end{equation}
	for a positive constant $c_0 > 0$, which is independent of $\varepsilon$.
\end{remark}

\begin{proof}[Proof of Lemma \ref{Lmm-MM-Decomp}]
	We derive by taking divergence operator $\nabla_x \cdot $ on the balance law \eqref{a-b-c} for $b$, that
    \begin{equation}\label{3.3.1}
       \begin{aligned}
         &- \Delta_{x} (a+3c)=\varepsilon \partial_{t} \nabla_x \cdot b+2 \Delta_{x} c+ \nabla_x \cdot \l v \cdot \nabla_x (I-P_{1})f, v \r_{L^2_v}-\varepsilon\nabla_{x}\cdot[d\nabla_{x}\phi] \,.
       \end{aligned}
    \end{equation}

    For any multi-index $\alpha \in \mathbb{N}^3$ with $  |\alpha| \leq N-1$, from applying the derivative operator $\partial_x^\alpha$ to the equation \eqref{3.3.1}, multiplying by $\partial^\alpha_{x} (a+3c)$, integrating by parts over $x \in \R^3$, we deduces  that
    \begin{equation}\label{R1+R2+R3+R4}
      \begin{aligned}
        & \|\nabla_{x} \partial_{x}^\alpha (a+3c) \|^{2}_{L_{x}^{2}} =\underbrace{\varepsilon\l\partial_{x}^{\alpha}\partial_{t}(\nabla_{x}\cdot b),\partial_{x}^{\alpha}(a+3c)\r_{L^{2}_{x}}}_{R_{1}}+\underbrace{2\l\partial_{x}^{\alpha}\Delta_{x}c,\partial_{x}^{\alpha}(a+3c)\r_{L^{2}_{x}}}_{R_{2}}\\
        &+\underbrace{\l\nabla_{x}\cdot\partial_{x}^{\alpha}[v\cdot\nabla_{x}(I-P_{1})f],\partial_{x}^{\alpha}(a+3c)v\r_{L^{2}_{x,v}}}_{R_{3}}
        \underbrace{-\varepsilon\l\partial_{x}^{\alpha}[\nabla_{x}\cdot(d\nabla_{x}\phi)],\partial_{x}^{\alpha}(a+3c)\r_{L^{2}_{x}}}_{R_{4}}.
      \end{aligned}
    \end{equation}

    Next, we estimate $R_{i}(1\leq i\leq 4)$. For the term $R_{1}$, it can estimated by the balance law \eqref{a-b-c} for $a+3c$ and integration by parts over $x \in \R^3$ that
    \begin{equation}\label{R1}
      \begin{split}
         R_{1} & = \frac{d}{d t} \l \varepsilon\nabla_{x} \cdot \partial_{x}^\alpha b^{+} , \partial_{x}^{\alpha} (a+3c) \r_{L^{2}_{x}} - \l \partial_{x}^\alpha \nabla_x \cdot b, \varepsilon \partial_{x}^{\alpha} \partial_{t} (a+3c) \r_{L^2_x} = \varepsilon \frac{d}{d t} \l \nabla_{x} \cdot \partial_{x}^{\alpha} b, \partial_{x}^{\alpha} (a+3c) \r_{L^{2}_{x}} + \|\partial_{x}^{\alpha} \nabla_{x}\cdot b\|^{2}_{L^{2}_{x}} \,.
      \end{split}
    \end{equation}

    For terms $R_{2}$, $R_{3}$, by applying integration by parts and the H\"older inequality, we have
    \begin{equation}\label{R2}
      \begin{split}
        R_{2} \leq 2 \| \nabla_{x} \partial_{x} ^\alpha c\|_{L^{2}_{x}} \| \nabla_{x} \partial_{x}^{\alpha} (a+3c) \|_{L^{2}_{x}} \leq 8 \| \nabla_{x} \partial_{x}^\alpha c\|^{2}_{L^{2}_{x}} + \frac{1}{8} \| \nabla_{x} \partial^{\alpha}_{x} (a+3c)\|^{2}_{L^{2}_{x}} \,,
      \end{split}
    \end{equation}
    and
    \begin{equation}\label{R3}
      \begin{split}
        R_{3} & \leq \| \nabla_{x} \partial^{\alpha}_{x} (I-P_{1})f\|_{L^2_{x,v}} \| \nabla_x \partial^\alpha_x (a+3c) \|_{L^2_x} \l |v|^4 , 1 \r_{L^2_v}^\frac{1}{2}  \lesssim\|f\|_{H^{N}_{x}L^{2}_{v}}\|\nabla_{x}P_{1}f\|_{H_{x}^{N-1}L_{v}^{2}}\lesssim\mathcal{E}_{N}^{\frac{1}{2}}(G,\phi)\mathcal{D}^{\frac{1}{2}}_{N,\varepsilon}(G).
      \end{split}
    \end{equation}

   The term $R_{4}$ can be estimated since $|\alpha| \leq N-1$
    \begin{equation}\label{R4}
      \begin{aligned}
        R_{4}&=\varepsilon\l\partial_{x}^{\alpha}(d\nabla_{x}\phi),\nabla_{x}\partial_{x}^{\alpha}(a+3c)\r_{L^{2}_{x}}=\varepsilon\sum_{\tilde{\alpha}\leq\alpha}\l\nabla_{x}\partial_{x}^{\tilde{\alpha}}\phi\cdot\partial_{x}^{\alpha-\tilde{\alpha}}d,\nabla_{x}\partial_{x}^{\alpha}(a+3c)\r_{L^{2}_{x}}\\
        &\lesssim\varepsilon\|\nabla_{x}\phi\|_{H^{N}_{x}}\|\nabla_{x}P_{2}g\|_{H^{N-1}_{x}}\|\nabla_{x}P_{1}f\|_{H^{N-1}_{x}L^{2}_{v}}\lesssim\varepsilon\mathcal{E}_{N}^{\frac{1}{2}}(G,\phi)\mathcal{D}_{N,\varepsilon}(G).
      \end{aligned}
    \end{equation}

Plugging the bounds \eqref{R1}, \eqref{R2}, \eqref{R3} and \eqref{R4} into \eqref{R1+R2+R3+R4}, we gain
    \begin{equation}\label{3.3.16}
      \begin{aligned}
        & \frac{7}{8}\|\nabla_{x}\partial_{x}^{\alpha}(a+3c)\|_{L^{2}_{x}}^{2}-8\|\nabla_{x}\partial_{x}^{\alpha}c\|_{L^{2}_{x}}^{2}\lesssim\varepsilon\mathcal{E}_{N}^{\frac{1}{2}}(G,\phi)\mathcal{D}_{N,\varepsilon}(G)+\mathcal{E}_{N}^{\frac{1}{2}}(G,\phi)\mathcal{D}^{\frac{1}{2}}_{N,\varepsilon}(G).
      \end{aligned}
    \end{equation}
    for all $|\alpha| \leq N-1$.

For any multi-indexes $\alpha \in \mathbb{N}^3 (|\alpha| \leq N-1)$, we derive from the $b$-equation \eqref{3.3.7} and $c$-evolution \eqref{3.3.8} that for any fixed index $i \in \{ 1,2,3 \}$
    \begin{equation}
      \begin{split}
       & - \Delta_{x} \partial_{x}^{\alpha} b_{i} =  - \sum_{j \neq i} \partial_{j} \partial_{j} \partial_{x}^{\alpha} b_{i} - \partial_{i} \partial_{i} \partial_{x}^{\alpha} b_{i}\\
        =&  \sum_{j \neq i} \partial_{j} \partial_{x}^{\alpha} ( \partial_{i} b_{j}- l_{ij}-h_{ij}- m_{ij}) + \partial_i \partial_{x}^{\alpha}(\varepsilon \partial_{t} c- l_{i}- h_{i}- m_{i}) \\
        = & \sum_{j \neq i} \partial_i \partial_x^\alpha ( - \varepsilon \partial_t c + l_j + h_j + m_j) - \sum_{j \neq i} \partial_j \partial_x^\alpha ( l_{ij}+ h_{ij}+ m_{ij})  + \partial_i \partial^\alpha_x ( \varepsilon \partial_t c - l_i - h_i- m_i) \\
        = & - \varepsilon\partial_t \partial_i \partial^\alpha_x c- \sum_{j \neq i} \partial_j \partial_x^\alpha ( l_{ij}+ h_{ij} + m_{ij})
       + \partial_i \partial^\alpha_x \Big[ \sum_{j \neq i} ( l_j+ h_j+ m_j) - (l_i+ h_i+ m_i) \Big] \,.
      \end{split}
    \end{equation}

There are exist a certain linear combination $\zeta_{ij}(v)$ of the basis \eqref{e 13} since $l_i$, $l_{ij}$, $h_i$, $h_{ij}$ and $m_i$, $m_{ij}$ are the coefficients of $l$, $h$ and $m$ , that
    \begin{equation}
      \begin{aligned}
        \partial_i \partial^\alpha_x \Big[ \sum_{j \neq i} ( l_j + h_j + m_j) &- (l_i + h_i+ m_i) \Big] - \sum_{j \neq i} \partial_j \partial_x^\alpha ( l_{ij}+ h_{ij}+ m_{ij})= \sum_{j=1}^3 \partial_j \partial^\alpha_x \l l+ h + m , \zeta_{ij}(v) \r_{L^2_v} \,.
      \end{aligned}
    \end{equation}

    Applying the balance law \eqref{a-b-c} for $c$, we derive
    \begin{equation}\label{3.3.5}
      \begin{aligned}
        - \Delta_x \partial^\alpha_x b_i- \frac{1}{3} \partial_i \partial^\alpha_x \nabla_x \cdot b&= \partial_i \partial^\alpha_x \Big( \frac{1}{6} \l v \cdot \nabla_x (I-P_{1})f , |v|^2 \r_{L^2_v}-\frac{\varepsilon}{3}\nabla_{x}\phi\cdot\l(I-P_{2})g,v\r_{L^{2}_{v}}\Big)\\
        &+ \sum_{j=1}^3 \partial_j \partial^\alpha_x \l l + h + m, \zeta_{ij} (v) \r_{L^2_v},
      \end{aligned}
    \end{equation}
    for any fixed index $i \in \{ 1,2,3 \}$ and all $\alpha \in \mathbb{N}^3$ with $|\alpha| \leq N-1$. We multiply \eqref{3.3.5} by $\partial_x^\alpha b_i$, sum up over the index $i$and integrate over $x \in \mathbb{R}^3$. We thereby gain
    \begin{equation}\label{T1+T2+T3+T4+T5}
      \begin{aligned}
        & \| \nabla_x \partial^\alpha_x b\|^2_{L^2_x}+ \frac{1}{3}\|\l\partial^\alpha_x \nabla_x \cdot b \|^2_{L^2_x}
        =  \underbrace{\sum_{i,j=1}^3 \l\partial_{i}\partial^\alpha_x l ,\partial^\alpha_x b_i \zeta_{ij}(v) \r_{L^2_{x,v}} }_{T_1} +\underbrace{ \sum_{i,j=1}^3 \l \partial_i \partial^\alpha_x h , \partial^\alpha_x b_i\zeta_{ij}(v) \r_{L^2_{x,v}} }_{T_2} \\
         &+\underbrace{\sum_{i,j=1}^3 \l \partial_i \partial^\alpha_x m , \partial^\alpha_x b_i\zeta_{ij}(v) \r_{L^2_{x,v}} }_{T_3}+  \underbrace{\frac{1}{6}\l\partial_{i}\partial_{x}^{\alpha}[v\cdot\nabla_{x}(I-P_{1})f],\partial_{x}^{\alpha}b_{i}|v|^{2}\r_{L^{2}_{x,v}}}_{T_4} \\
        &+\underbrace{ \frac{2\varepsilon}{3}\l\partial_{x}^{\alpha}[\nabla_{x}\phi\cdot(I-P_{2})g],\partial_{i}\partial_{x}^{\alpha}b_{i}v\r_{L^2_{x,v}} }_{T_5},
      \end{aligned}
    \end{equation}
    for $|\alpha| \leq N-1$.

 We next estimate these terms $T_i\ (1\leq i\leq 5)$ in \eqref{T1+T2+T3+T4+T5}. Recalling the definition of $l$ in \eqref{l+h+m}, we have
    \begin{equation}\label{T1=T11+T12+T13}
      \begin{aligned}
        &T_1 =  - \varepsilon \frac{d}{d t} \sum_{i,j=1}^3 \l \partial_i \partial^\alpha_x (I-P_{1})f , \partial^\alpha_x b_i\zeta_{ij}(v) \r_{L^2_{x,v}}+ \underbrace{\sum_{i,j=1}^3 \l \partial_i \partial^\alpha_x (I-P_{1}) f, \varepsilon \partial_t \partial^\alpha_x b_i\zeta_{ij}(v) \r_{L^2_{x,v}} }_{T_{11}} \\
        & + \underbrace{\sum_{i,j=1}^3 \l \l v \cdot \nabla_x \partial^\alpha_x (I-P_{1})f, \partial_i \partial^\alpha_x b_i\zeta_{ij}(v) \r_{L^2_{x,v}} }_{T_{12}}+ \underbrace{\sum_{i,j=1}^3 \l \frac{1}{\varepsilon} \mathcal{L}_{1} \partial^\alpha_x (I-P_{1})f, \partial_i \partial^\alpha_x b_i\zeta_{ij}(v) \r_{L^2_{x,v}} }_{T_{13}}.
      \end{aligned}
    \end{equation}

 The term $T_{11}$ can be estimated
    \begin{equation}\no
      \begin{aligned}
        T_{11} &=  \underbrace{ \sum_{i,j=1}^3 \l \partial_i \partial^\alpha_x (I-P_{1})f, \partial_{i}\partial_{x}^{\alpha}\partial_{x_{i}}(a+5c)\zeta_{ij}\r_{L^2_{x,v}} }_{T_{111}}+\underbrace{ \sum_{i,j=1}^3 \l \partial^\alpha_x (I-P_{1}) f , \varepsilon\partial_i \partial^\alpha_x ( d\partial_{x_{i}}\phi) \zeta_{ij} (v) \r_{L^2_{x,v}} }_{T_{112}} \\
        &+\underbrace{\sum_{i,j=1}^3 \l \partial^\alpha_x(I-P_{1}) f, \partial_i \partial^\alpha_x \l v \cdot \nabla_x (I-P_{1})f, v_i \r_{L^2_v} \zeta_{ij}(v) \r_{L^2_{x,v}} }_{T_{113}},
      \end{aligned}
    \end{equation}
   by using the equation $\eqref{a-b-c}$ of the secondary balance law. Then the term $T_{111}$ is bounded by
    \begin{equation}\no
      \begin{aligned}
        T_{111} &\leq\sum_{i,j=1}^{3}\l\|\nabla_{i}\partial_{x}^{\alpha}(I-P_{1})f\|_{L^{2}_{v}},|\partial_{x}^{\alpha}\partial_{x_{i}}(a+5c)|\|\zeta_{i,j}\|_{L^{2}_{v}}\r_{L^{2}_{x}}\lesssim\|\nabla_{x}\partial_{x}^{\alpha}(I-P_{1})f\|_{L^{2}_{x,v}}\|\nabla_{x}\partial_{x}^{\alpha}(a+5c)\|_{L^{2}_{x}}\\
        &\lesssim\|(I-P_{1})f\|_{H^{N}_{x}L^{2}_{v}}\|\nabla_{x}P_{1}f\|_{H^{N-1}_{x}L^{2}_{v}}\lesssim\varepsilon\mathcal{E}^\frac{1}{2}_N (G, \phi) \mathcal{D}^\frac{1}{2}_{N,\varepsilon} (G) \,,
      \end{aligned}
    \end{equation}
    where we make use of \eqref{nu} and the H\"older inequality. For the term $T_{152}$, we estimate that
    \begin{equation}\no
      \begin{aligned}
        T_{112}&=\varepsilon\sum_{i,j=1}^{3}\sum_{\tilde{\alpha}\leq\alpha}\l\partial_{i}\partial_{x}^{\alpha}(I-P_{1})f,\partial_{x}^{\tilde{\alpha}}\partial_{x_{i}}\phi\partial_{x}^{\alpha-\tilde{\alpha}}d\zeta_{i,j}\r_{L^{2}_{x,v}}\\
        &\lesssim\varepsilon\|(I-P_{1})f\|_{H^{N}_{x}L^{2}_{v}}\|\nabla_{x}\phi\|_{H^{N}_{x}}\|\nabla_{x}P_{2}g\|_{H^{N-1}_{x}}\lesssim \varepsilon\mathcal{E}^\frac{1}{2}_N (G, \phi) \mathcal{D}_{N,\varepsilon} (G),
      \end{aligned}
    \end{equation}
 where we mark use of $\|f\|_{L^{6}_{x}(\mathbb{R}^{3})}\lesssim\|\nabla_{x}f\|_{L^{2}_{x}(\mathbb{R}^{3})}$. Analogously, the terms $T_{113}$ are bounded by
    \begin{equation}\no
      \begin{aligned}
        T_{113}& \lesssim \|(I-P_{1})f\|^{2}_{H^{N}_{x}L^{2}_{v}}\lesssim\|f\|_{H^{N}_{x} L^{2}_{v}}\|(I-P_{1})f\|_{H^N_{x} L^2_{v}(\nu)}\lesssim\varepsilon \mathcal{E}^\frac{1}{2}_N (G, \phi) \mathcal{D}^{\frac{1}{2}}_{N,\varepsilon} (G),
      \end{aligned}
    \end{equation}
   where we make use of \eqref{nu} and the H\"older inequality. In summary, we derive that
    \begin{equation}\no
      \begin{aligned}
        T_{11} \lesssim\varepsilon \mathcal{E}^\frac{1}{2}_N (G, \phi) \mathcal{D}^{\frac{1}{2}}_{N,\varepsilon} (G)+\varepsilon \mathcal{E}_N (G, \phi) \mathcal{D}^{\frac{1}{2}}_{N,\varepsilon} (G).
      \end{aligned}
    \end{equation}

    Moreover, it is easily to estimate that
    \begin{equation}\no
      \begin{aligned}
        T_{12} \lesssim\|f\|_{H_{x}^{N}L_{v}^{2}}\|\nabla_{x}P_{1}f\|_{H^{N-1}_{x}L^{2}_{v}}\lesssim\mathcal{E}^\frac{1}{2}_N (G, \phi) \mathcal{D}^{\frac{1}{2}}_{N,\varepsilon}(G),
      \end{aligned}
    \end{equation}
moreover we have
    \begin{equation}\no
      \begin{aligned}
        T_{13}&=\frac{1}{\varepsilon}\sum_{i,j=1}^{3}\l\partial_{x}^{\alpha}(I-P_{1})f, \partial_{i}\partial_{x}^{\alpha}b_{i}\mathcal{L}_{1}\zeta_{ij}\r_{L^{2}_{x,v}}\lesssim\frac{1}{\varepsilon}\sum_{i,j=1}^{3}\l\|\partial_{x}^{\alpha}(I-P_{1})f\|_{L^{2}_{x,v}(\nu)}\|\zeta_{ij}\|_{L^{2}_{v}(\nu)},|\partial_{i}\partial_{x}^{\alpha}b_{i}|\r_{L^{2}_{x}}\\
        &\lesssim\frac{1}{\varepsilon}\|(I-P_{1})f\|_{H^{N}_{x}L^{2}_{v}(\nu)}\|\nabla_{x}P_{1}f\|_{H^{N-1}_{x}L^{2}_{v}}\lesssim\mathcal{E}_{N}^{\frac{1}{2}}(G,\phi)\mathcal{D}_{N,\varepsilon}^{\frac{1}{2}}(G),
      \end{aligned}
    \end{equation}
    where the self-adjoint property of the linearized Boltzmann collision operator $\mathcal{L}_{1}$ and Lemma\ref{Lmm-Q} are used. As a result, we gain
    \begin{equation}\label{T1}
    \begin{aligned}
    T_{1}+ \varepsilon \frac{d}{d t} \sum_{i,j=1}^3 \l \partial_i \partial^\alpha_x (I-P_{1})f , \partial^\alpha_x b_i\zeta_{ij}(v) \r_{L^2_{x,v}}\lesssim\mathcal{E}_{N}^{\frac{1}{2}}(G,\phi)\mathcal{D}_{N,\varepsilon}^{\frac{1}{2}}(G)+\mathcal{E}_{N}^{\frac{1}{2}}(G,\phi)\mathcal{D}_{N,\varepsilon}(G),
    \end{aligned}
    \end{equation}
    for all $0 < \varepsilon \leq 1$ and $|\alpha| \leq N-1$. We derive from the similar estimates in \eqref{I1} by recalling the definition of $h, m$ in \eqref{l+h+m}, that
    \begin{equation}\label{T2-T3}
      \begin{aligned}
        T_2+T_{3}\lesssim \mathcal{E}_N^\frac{1}{2} (G, \phi) \mathcal{D}_{N,\varepsilon} (G) \,.
      \end{aligned}
    \end{equation}

    Similar argument as \eqref{T2-T3}, the term $T_4$ and $T_{5}$ can be bounded by
    \begin{equation}\label{T4+T5}
      \begin{aligned}
        T_{4}+T_{5}\lesssim\varepsilon\mathcal{E}_{N}^\frac{1}{2} (G, \phi) \mathcal{D}_{N, \varepsilon}(G)+\varepsilon\mathcal{E}_{N}^\frac{1}{2} (G, \phi) \mathcal{D}^\frac{1}{2} _{N, \varepsilon}(G).
      \end{aligned}
    \end{equation}

In summary, we can estimate \eqref{T1+T2+T3+T4+T5} by estimates above and the Young's inequality that
    \begin{equation}\label{3.3.15}
      \begin{aligned}
        \| \nabla_{x} \partial^{\alpha}_{x} b\|_{L^{2}_{x}}^{2}+ \frac{1}{3}\| \partial^\alpha_x \nabla_x \cdot b\|^{2}_{L^{2}_{x}}+& \varepsilon\frac{d}{d t} \sum_{i,j=1}^{3} \l \partial_1 \partial^\alpha_x (I-P_{1})f, \partial^{\alpha}_{x} b_i\zeta_{ij}(v) \r_{L^2_{x,v}}
        \lesssim \mathcal{E}_N^\frac{1}{2} (G, \phi) \mathcal{D}_{N, \varepsilon} (G)+ \mathcal{E}_N^\frac{1}{2} (G, \phi) \mathcal{D}_{N, \varepsilon}^\frac{1}{2} (G),
      \end{aligned}
    \end{equation}
    for all $0 < \varepsilon \leq 1$ and $|\alpha| \leq N-1$. Next, we estimate the norm $\| \nabla_{x} \partial^{\alpha}_{x} c\|_{L^{2}_{x}}$ for all $|\alpha| \leq N-1$. From \eqref{3.3.9}, we have
    \begin{equation}\label{3.3.12}
      \begin{aligned}
        - \Delta_{x} \partial^{\alpha}_{x} c = - \sum_{i=1}^{3} \partial_i \partial^\alpha_x \big(  l_{ci}+ h_{ci}+ m_{ci}\big) = \sum_{i=1}^{3} \partial_{i} \partial^{\alpha}_{x} \l l+ h+ m, \zeta_{i}(v) \r_{L^2_v},
      \end{aligned}
    \end{equation}
    for some certain linear combinations $\zeta_{i} (v)$ of the basis in \eqref{e 13}. Multiplying \eqref{3.3.12} by $\nabla_{x}\partial_{x}^{\alpha}c$ yields
    \begin{equation}\label{M1+M2+M3}
      \begin{split}
        &\| \nabla_x \partial_x^\alpha c\|^2_{L^2_x}= \underbrace{\sum_{i=1}^{3} \l \partial_{i}\partial^{\alpha}_{x} l, \partial^{\alpha}_{x} c\zeta_{i}(v) \r_{L^{2}_{x,v}} }_{M_{1}}+\underbrace{ \sum_{i=1}^{3} \l \partial_{i}\partial^{\alpha}_{x} h,\partial^{\alpha}_{x} c \zeta_{i}(v) \r_{L^{2}_{x,v}} }_{M_{2}}+\underbrace{\sum_{i=1}^3 \l \partial_{i}\partial^{\alpha}_{x} m, \partial^{\alpha}_{x} c \zeta_{i}(v) \r_{L^{2}_{x,v}} }_{M_{3}},
      \end{split}
    \end{equation}
    for all $|\alpha| \leq N-1$. With the same analysis method as above for $T_{i}(1\leq i\leq 5)$, we can get the estimate of $M_{i}(1\leq i\leq 3)$.
Consequently, we have
    \begin{equation}\label{3.3.14}
      \begin{split}
        \| \nabla_{x} \partial_{x}^{\alpha}c\|^{2}_{L^{2}_{x}}+ \varepsilon \frac{d}{d t} \sum_{i=1}^3 \l \partial_i \partial^\alpha_x (I-P_{1})f, \partial^{\alpha}_{x} c\zeta_{i}(v) \r_{L^{2}_{x,v}}\lesssim \mathcal{E}^\frac{1}{2} (G, \phi) \mathcal{D}_{N,\varepsilon} (G)+\mathcal{E}_N^\frac{1}{2} (G, \phi) \mathcal{D}_{N,\varepsilon}^\frac{1}{2}(G),
      \end{split}
    \end{equation}
    for  $0 < \varepsilon \leq 1$ and $|\alpha| \leq N-1$.

It remains to estimate the norm $\|\nabla_{x}\partial_{x}^{\alpha}d\|_{L^{2}_{x}}$. Recalling \eqref{decomposition2} and \eqref{l+h+m}, we obtain
\begin{equation}\label{d2}
\begin{aligned}
\varepsilon\partial_{t}d+v\cdot\nabla_{x}d=\hat{l}+\hat{h}+\hat{m},
\end{aligned}
\end{equation}
where
\begin{equation}\label{hat{l,h,m}}
\begin{aligned}
\hat{l}=&-\varepsilon\partial_{t}(I-P_{2})g-v\cdot\nabla_{x}(I-P_{2})g-\frac{1}{\varepsilon}\mathcal{L}_{2}(I-P_{2})g,\\
\hat{h}=&Q(g,f),\\
\hat{m}=&2v\cdot\nabla_{x}\phi+\varepsilon fv\cdot\nabla_{x}\phi-\varepsilon\nabla_{v}f\cdot\nabla_{x}\phi.
\end{aligned}
\end{equation}

Then, we have
\begin{equation}\no
\begin{aligned}
\varepsilon\l\partial_{t}d,v\r_{L^{2}_{v}}+\l v\cdot\nabla_{x}d,v\r_{L^{2}_{v}}=\l\hat{l}+\hat{h}+\hat{m},v\r_{L^{2}_{v}},
\end{aligned}
\end{equation}
which implies that
\begin{equation}\label{d1}
\begin{aligned}
\nabla_{x}d=\l\hat{l}+\hat{h}+\hat{m},v\r_{L^{2}_{v}}.
\end{aligned}
\end{equation}

For any multi-index $\alpha\in\mathbb{N}^{3}$ with $|\alpha|\leq N-1$, from applying the derivative operator $\partial_{x}^{\alpha}$ to the equation \eqref{d1}, multiplying by $\nabla_{x}\partial_{x}^{\alpha}d$, integrating by parts over $x\in\mathbb{R}^{3}$, we yields that
\begin{equation}
\begin{aligned}
\|\nabla_{x}\partial_{x}^{\alpha}d\|_{L^{2}_{x}}^{2}=\l\partial_{x}^{\alpha}\hat{l},v\cdot\nabla_{x}\partial_{x}^{\alpha}d\r_{L^{2}_{x,v}}+\l\partial_{x}^{\alpha}\hat{h},v\cdot\nabla_{x}\partial_{x}^{\alpha}d\r_{L^{2}_{x,v}}
+\l\partial_{x}^{\alpha}\hat{m},v\cdot\nabla_{x}\partial_{x}^{\alpha}d\r_{L^{2}_{x,v}}.
\end{aligned}
\end{equation}

With the same analysis method as above for $T_{i}$, it is easy to yields that
\begin{equation}\label{d}
\begin{aligned}
\|\nabla_{x}\partial_{x}^{\alpha}d\|_{L^{2}_{x}}^{2}+\varepsilon\frac{d}{d t}\l\partial_{x}^{\alpha}(I-P_{2})g, v\cdot\nabla_{x}\partial_{x}^{\alpha}d\r_{L^{2}_{x,v}}\lesssim\mathcal{E}^{\frac{1}{2}}_{N}(G,\phi)\mathcal{D}_{N,\varepsilon}(G)+\mathcal{E}^{\frac{1}{2}}_{N}(G,\phi)\mathcal{D}^{\frac{1}{2}}_{N,\varepsilon}(G).
\end{aligned}
\end{equation}

    From adding the bounds \eqref{3.3.15}, $17$ times of \eqref{3.3.14} and \eqref{d} to $2$ times of \eqref{3.3.16}, summing up for $|\alpha| \leq N-1$ and the Young's inequality, then we have
    \begin{equation}
      \begin{aligned}
\varepsilon\frac{d}{d t} E^{int}_N (G) +\| \nabla_{x} b\|^{2}_{H^{N-1}_{x}} &+ \| \nabla_{x} c \|^{2}_{H^{N-1}_x} +\|\nabla_{x}(a+3c)\|_{H^{N-1}_{x}}+\| \nabla_{x} d\|^{2}_{H^{N-1}_{x}}\\
&\lesssim\mathcal{E}_N^\frac{1}{2} (G, \phi) \mathcal{D}_{N, \varepsilon} (G)+\mathcal{E}_N^\frac{1}{2} (G, \phi) \mathcal{D}_{N,\varepsilon}^\frac{1}{2}(G),
      \end{aligned}
    \end{equation}
    for all $0 < \varepsilon \leq 1$ and $N \geq 4$, where the interactive energy functional $E^{int}_N (G)$ is given in \eqref{Interactive-Energy}. Consequently, we have completed the proof of the Lemma \ref{Lmm-MM-Decomp}.
\end{proof}

\subsection{Energy estimates for $(x,v)$-mixed derivatives}

In this subsection,  we will  derive a closed energy estimate. Because of the above two subsections, we only require to estimate the energy of $(x,v)$-mixed derivatives of the kinetic part $(\mathbb{I}-\mathbb{P})G$. We give the following lemma.

\begin{lemma}\label{Lmm-Mixed-Est}
	Suppose that $(g^{+}_{\varepsilon} (t,x,v), g^{-}_{\varepsilon} (t,x,v), \phi_{\varepsilon}(t,x,v))$ is the solution to the perturbed VPB system \eqref{VPB-g}-\eqref{f's intial} constructed in Lemma \ref{Lmm-Local}. Let $N \geq 4$ be any fixed integer. For any given $0 \leq k \leq N$ and $|\beta| \leq k$, there exist some positive constants $\varrho_k$, $\varrho_k^*$, $C_{|\beta|}$, $r_k$, $r_k^*$, $ \xi_k $ and $\eta_0$, which are independent of $\varepsilon$, such that
	\begin{equation}\label{Mixed-Inq-Closed}
	  \begin{aligned}
	    &\frac{1}{2} \frac{d}{d t} \Bigg(\varrho_{k} \|G\|^{2}_{H^{N}_{x} L^{2}_{v}} + \varrho_{k} \| \nabla_{x} \phi \|^{2}_{H^{N}_{x}} + \sum_{\substack{|\alpha| + |\beta| \leq N \\ |\beta| \leq k}} C_{|\beta|} \eta^2 \| \partial^\alpha_x \partial^\beta_v (\mathbb{I}-\mathbb{P})G\|^2_{L^2_{x,v}}+ \varrho_k^* \eta E_N^{int} (G)\Bigg)\\
	    & + \frac{r_k}{\varepsilon^2} \| (\mathbb{I}-\mathbb{P})G\|^2_{H^N_x L^2_v (\nu)} + r_k^* \eta \| \nabla_{x}\mathbb{P} G\|^2_{H^{N-1}_x L^2_v} + \frac{\xi_k}{\varepsilon^2} \eta^2\sum_{\substack{|\alpha| + |\beta| \leq N \\ |\beta| \leq k}} \| \partial^\alpha_x \partial^\beta_v (\mathbb{I}-\mathbb{P})G\|^2_{L^2_{x,v} (\nu)}\\
&\lesssim \mathcal{E}_N^\frac{1}{2} (G, \phi) \mathcal{D}_{N, \varepsilon} (G)+\mathcal{E}_N^\frac{1}{2} (G, \phi) \mathcal{D}_{N,\varepsilon}^\frac{1}{2}(G),
	  \end{aligned}
	\end{equation}
\end{lemma}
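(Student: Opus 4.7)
The plan is to obtain \eqref{Mixed-Inq-Closed} by combining three ingredients: the pure spatial estimate from Lemma \ref{Lmm-Spatial}, the macroscopic dissipation from Lemma \ref{Lmm-MM-Decomp}, and new mixed $(x,v)$-derivative estimates of the microscopic part $(\mathbb{I}-\mathbb{P})G$, all glued together with carefully tuned small coefficients. The mixed estimate itself proceeds by induction on $k = |\beta|$; the base case $k=0$ is already covered by Lemma \ref{Lmm-Spatial}.

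For the inductive step, I would start from the microscopic equations obtained by applying $I - P_1$ and $I - P_2$ to the first two equations of \eqref{VPB-g-drop-eps}, namely schematically
\begin{equation*}
\partial_t (\mathbb{I}-\mathbb{P}) G + \tfrac{1}{\varepsilon^2}\mathcal{L}(\mathbb{I}-\mathbb{P})G = -\tfrac{1}{\varepsilon}(\mathbb{I}-\mathbb{P})(v\cdot\nabla_x G) + (\mathbb{I}-\mathbb{P})(\text{field terms}) + \tfrac{1}{\varepsilon}\mathbb{Q}(G,G),
\end{equation*}
where $\mathcal{L} = (\mathcal{L}_1,\mathcal{L}_2)^T$ and $\mathbb{Q} = (Q(f,f), Q(g,f))^T$. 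For each fixed $\alpha,\beta$ with $|\alpha|+|\beta|\leq N$ and $1\leq|\beta|\leq k$, I would apply $\partial_x^\alpha \partial_v^\beta$, test against $\partial_x^\alpha \partial_v^\beta (\mathbb{I}-\mathbb{P})G$ in $L^2_{x,v}$, and use Lemma \ref{Lmm-Coercivity-L} to get the good $v$-weighted dissipation $\tfrac{2\delta_1}{\varepsilon^2}\|\partial_x^\alpha\partial_v^\beta(\mathbb{I}-\mathbb{P})G\|^2_{L^2_{x,v}(\nu)}$, at the cost of the uncontrolled lower-order piece $\tfrac{\delta_2}{\varepsilon^2}\sum_{\tilde\beta<\beta}\|\partial_x^\alpha\partial_v^{\tilde\beta}(\mathbb{I}-\mathbb{P})G\|^2_{L^2_{x,v}}$. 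The singular transport term, after splitting $G = \mathbb{P}G + (\mathbb{I}-\mathbb{P})G$ and integrating by parts in $x$, produces $\partial_v$ acting on $v$ that drops one $v$-derivative order (hence the $\partial_v^{\beta-1}$ contributions seen in $S_{12}$, $S_{13}$ of the local proof), absorbable by Young's inequality into $\tfrac{\delta_1}{16\varepsilon^2}\|\partial_x^\alpha\partial_v^\beta(\mathbb{I}-\mathbb{P})G\|^2_{L^2_{x,v}(\nu)}$ plus $\tfrac{\delta}{16\varepsilon^2}\|\partial_x^{\alpha+1}(\mathbb{I}-\mathbb{P})G\|^2_{L^2_{x,v}(\nu)}$; the latter is then eaten by the Lemma \ref{Lmm-Spatial} kinetic dissipation when $|\alpha|+1\leq N$. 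The remaining nonlinear and electric-field contributions are bounded by $\mathcal{E}_N^{1/2}\mathcal{D}_{N,\varepsilon}^{1/2}$ and $\mathcal{E}_N^{1/2}\mathcal{D}_{N,\varepsilon}$ via Lemma \ref{Lmm-Q} and Sobolev embeddings, exactly as in the local-existence estimates $S_3$--$S_6$.

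Having derived such a bound for each $(\alpha,\beta)$, the inductive closure is obtained by summing the estimates over $|\beta|=k$ with weights $C_{|\beta|}\eta^2$ and choosing $C_{|\beta|}$ decreasing fast enough (concretely $C_{|\beta|} \ll C_{|\beta|-1}$) so that the singular leakage $\tfrac{\delta_2 C_{|\beta|}\eta^2}{\varepsilon^2}\sum_{\tilde\beta<\beta}\|\partial_x^\alpha\partial_v^{\tilde\beta}(\mathbb{I}-\mathbb{P})G\|^2_{L^2_{x,v}(\nu)}$ is dominated by $\tfrac{\xi_{k-1} C_{|\beta|-1}\eta^2}{\varepsilon^2}\|\partial_x^\alpha\partial_v^{\tilde\beta}(\mathbb{I}-\mathbb{P})G\|^2_{L^2_{x,v}(\nu)}$ already present at the previous induction level. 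To obtain the fluid dissipation $r_k^*\eta\|\nabla_x\mathbb{P}G\|^2_{H^{N-1}_x L^2_v}$, I add $\eta$ times \eqref{MM-Inq-Simple}, which introduces the unsigned term $\varrho_k^*\eta E_N^{int}(G)$ in the energy; by \eqref{Interactive-Energy-Bnd} this term is dominated by $\mathcal{E}_N(G,\phi)$ as long as $\eta$ is small. Finally I add $\varrho_k$ times the Lemma \ref{Lmm-Spatial} inequality with $\varrho_k$ large enough so that the full $\tfrac{r_k}{\varepsilon^2}\|(\mathbb{I}-\mathbb{P})G\|^2_{H^N_x L^2_v(\nu)}$ term dominates all stray $\|\partial_x^{\alpha+1}(\mathbb{I}-\mathbb{P})G\|^2$ terms generated above.

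The main obstacle will be the bookkeeping of constants: the hierarchy must satisfy simultaneously $C_{|\beta|}\delta_2 \ll C_{|\beta|-1}\xi_{k-1}$ (to absorb the coercivity leakage into the lower $v$-order dissipation), $\eta \ll 1$ (so that $\varrho_k^*\eta E_N^{int}(G)$ stays a small perturbation of $\mathcal{E}_N$ and so that the commutator errors $\mathcal{E}_N^{1/2}\mathcal{D}_{N,\varepsilon}^{1/2}$ from Lemma \ref{Lmm-MM-Decomp} are absorbable), and $\varrho_k \gg \sum_{|\beta|\leq k} C_{|\beta|}\eta^2$ (so that the pure spatial kinetic dissipation dominates all $\partial_x^{\alpha+1}$ residues from the $v$-derivative step). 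These choices can be made sequentially: first fix the $C_{|\beta|}$ in decreasing order starting from $|\beta|=N$, then fix $\eta$, then fix $\varrho_k$, $\varrho_k^*$; once this ordering is respected, the right-hand side of the combined inequality collapses to the stated $\mathcal{E}_N^{1/2}\mathcal{D}_{N,\varepsilon} + \mathcal{E}_N^{1/2}\mathcal{D}_{N,\varepsilon}^{1/2}$, completing the induction on $k$ and yielding \eqref{Mixed-Inq-Closed}.
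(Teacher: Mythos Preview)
Your proposal is correct and follows essentially the same approach as the paper: derive the microscopic equations \eqref{3.4.2}, apply $\partial_x^\alpha\partial_v^\beta$ with $\beta\neq 0$, test against $\partial_x^\alpha\partial_v^\beta(\mathbb{I}-\mathbb{P})G$, invoke the coercivity Lemma \ref{Lmm-Coercivity-L}, estimate the transport, field and collision terms by $\mathcal{E}_N^{1/2}\mathcal{D}_{N,\varepsilon}^{1/2}$ or $\mathcal{E}_N^{1/2}\mathcal{D}_{N,\varepsilon}$, then combine with $\eta$ times \eqref{MM-Inq-Simple} and the spatial estimate \eqref{Spatial-Bnd} before closing by induction on $|\beta|$. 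One small remark: in the paper's treatment of the transport term $W_2$ the Young-type splitting into $\tfrac{\delta_1}{16\varepsilon^2}\|\cdot\|^2$ and $\tfrac{\delta}{16\varepsilon^2}\|\partial_x^{\alpha+1}(\mathbb{I}-\mathbb{P})G\|^2$ that you import from the local argument is not actually needed, because here the dissipation $\mathcal{D}_{N,\varepsilon}$ already contains the fluid part $\|\nabla_x\mathbb{P}G\|^2_{H^{N-1}_x L^2_v}$, so each $W_{2j}$ is bounded directly by $\mathcal{E}_N^{1/2}\mathcal{D}_{N,\varepsilon}^{1/2}$; this simplifies the constant bookkeeping somewhat but does not change the structure of the argument.
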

for all $0 < \varepsilon \leq 1$ and $0 < \eta \leq \eta_0$.

\begin{remark}\label{Rmk-Mixed}
	We will chose $\eta \in (0, \eta_0 ]$ mentioned in Lemma \ref{Lmm-Mixed-Est} so smaller later that the unsigned interactive energy functional $\varrho_k^* \eta E_N^{int} (G)$ can be dominated by $\varrho_k \|G \|^2_{H^N_x L^2_v}$ due to the bound \eqref{Interactive-Energy-Bnd} in Remark \ref{Rmk-MM}. Moreover, we introduce the instant energy functional as follow
	\begin{equation}
	  \begin{aligned}
	    \mathbb{E}_{N, \eta} (G, \phi) = & \varrho_N \|G\|^2_{H^N_x L^2_v} + \varrho_N \| \nabla_x \phi \|^2_{H^N_x} + \varrho_N^* \eta E_N^{int} (G) +\sum_{\substack{|\alpha| + |\beta| \leq N \\ |\beta| \leq N}} C_{|\beta|} \eta^2 \| \partial^\alpha_x \partial^\beta_v (\mathbb{I}-\mathbb{P}) G\|^2_{L^2_{x,v}}
	  \end{aligned}
	\end{equation}
	and the instant energy dissipative rate functional
	\begin{equation}
	  \begin{aligned}
	    \mathbb{D}_{N, \varepsilon, \eta} (G) = & \frac{r_N}{\varepsilon^2} \| (\mathbb{I}-\mathbb{P})G \|^2_{H^N_x L^2_v (\nu)} + r_N^* \eta \| \nabla_x \mathbb{P}G\|^2_{H^{N-1}_x L^2_v}+ \frac{\xi_k}{\varepsilon^2} \eta^2\sum_{\substack{|\alpha| + |\beta| \leq N \\ |\beta| \leq N}} \| \partial^\alpha_x \partial^\beta_v (\mathbb{I}-\mathbb{P})G\|^2_{L^2_{x,v} (\nu)} \,.
	  \end{aligned}
	\end{equation}
	We can easy verify that there exist a small $\eta_1 \in (0, \eta_0 ]$, independent of $\varepsilon$, such that
	\begin{equation}\label{Equivalences-Energies}
	  \begin{aligned}
	    \mathbb{E}_{N, \eta_1} (G, \phi) \thicksim \mathcal{E}_N (G, \phi) \,, \quad \mathbb{D}_{N, \varepsilon, \eta_1} (G) \thicksim \mathcal{D}_{N, \varepsilon} (G) \,.
	  \end{aligned}
	\end{equation}
	Consequently, by letting $k = N$ in the inequality \eqref{Mixed-Inq-Closed}, we have
	\begin{equation}\label{Closed-Energy-Inq}
	  \begin{aligned}
	    \frac{1}{2} \frac{d}{d t} \mathbb{E}_{N, \eta_1} (G, \phi) + \mathbb{D}_{N, \varepsilon, \eta_1} (G) &\lesssim \mathbb{E}^\frac{1}{2}_{N, \eta_1} (G, \phi) \mathbb{D}_{N, \varepsilon, \eta_1} (G)+\mathbb{E}_{N, \eta_1} (G, \phi) \mathbb{D}^\frac{1}{2}_{N, \varepsilon, \eta_1} (G)\\
&+\mathbb{E}^\frac{1}{2}_{N, \eta_1} (G, \phi) \mathbb{D}^\frac{1}{2}_{N, \varepsilon, \eta_1} (G).
	  \end{aligned}
	\end{equation}
\end{remark}

\begin{proof}[Proof of Lemma \ref{Lmm-Mixed-Est}]

Applying the microscopic projection $(I-P_{1})$ to first $f$-equation of \eqref{VPB-g-drop-eps} and $(I-P_{2})$ to the second $g$-equation of \eqref{VPB-g-drop-eps}, then
\begin{equation}\label{3.4.2}
  \begin{aligned}
\partial_{t}(I-P_{1})f+\frac{1}{\varepsilon^{2}}\mathcal{L}_{1}(I-P_{1})f&=\frac{1}{\varepsilon}Q(f,f)-\frac{1}{\varepsilon}(I-P_{1})(v\cdot\nabla_{x}f)+(I-P_{1})(gv\cdot\nabla_{x}\phi-\nabla_{x}\phi\cdot\nabla_{v}g),\\
\partial_{t}(I-P_{2})g+\frac{1}{\varepsilon^{2}}\mathcal{L}_{2}(I-P_{2})g&=\frac{1}{\varepsilon}Q(g,f)-\frac{1}{\varepsilon}(I-P_{2})(v\cdot\nabla_{x}g)+(I-P_{2})(fv\cdot\nabla_{x}\phi-\nabla_{v}f\cdot\nabla_{x}\phi),
  \end{aligned}
\end{equation}
where the relations $ (I-P_{2}) (v \cdot \nabla_x \phi)= 0 $ are used.

For all multi-indexes $\alpha$, $\beta \in \mathbb{N}^3$ with $|\alpha| + |\beta| \leq N$ and $\beta \neq 0$, we employ the mixed derivatives operator $\partial_{x}^{\alpha}\partial_{v}^{\beta}$ to \eqref{3.4.2}, take $L^2_{x,v}$-inner product via multiplying by $\partial^{\alpha}_{x} \partial^{\beta}_{v} (I-P_{1})f$ and $\partial^\alpha_x \partial^\beta_v (I-P_{2})g$, respectively, integrate by parts over $(x,v) \in \R^3 \times \R^3$. Consequently, we deduce that there exist two positive constants $\delta_{1}$ and $\delta_{2}$ which independent of $\varepsilon$ by Lemma \ref{Lmm-Coercivity-L}, such that
\begin{equation}\label{W1+W2+W3+W4+W5+W6+W7}
  \begin{aligned}
    & \frac{1}{2} \frac{d}{d t} \| \partial_{x}^{\alpha} \partial_{v}^{\beta} (\mathbb{I}-\mathbb{P}) G\|^2_{L^2_{x,v}} + \frac{2\delta_1}{\varepsilon^2}\| \partial_{x}^{\alpha} \partial_{v}^{\beta}  (\mathbb{I}-\mathbb{P}) G \|_{L^2_{x,v}(\nu)}^2- \frac{\delta_{2}}{\varepsilon^{2}} \sum_{\tilde{\beta} < \beta} \| \partial_x^\alpha \partial_v^{\tilde{\beta}}  (\mathbb{I}-\mathbb{P}) G  \|_{L^2_{x,v}(\nu)}^2 \\
    &\lesssim\underbrace{\frac{1}{\varepsilon}\big(\l\partial_{x}^{\alpha}\partial_{v}^{\beta}Q(f,f),\partial_{x}^{\alpha}\partial_{v}^{\beta}(I-P_{1})f\r_{L^{2}_{x,v}}+\l\partial_{x}^{\alpha}\partial_{v}^{\beta}Q(g,f),\partial_{x}^{\alpha}\partial_{v}^{\beta}(I-P_{2})g\r_{L^{2}_{x,v}}\big)}_{W_{1}}\\
    &\underbrace{-\frac{1}{\varepsilon}\big(\l\partial_{x}^{\alpha}\partial_{v}^{\beta}(I-P_{1})(v\cdot\nabla_{x}f),\partial_{x}^{\alpha}\partial_{v}^{\beta}(I-P_{1})f\r_{L^{2}_{x,v}}+\l\partial_{x}^{\alpha}\partial_{v}^{\beta}(I-P_{2})(v\cdot\nabla_{x}g),\partial_{x}^{\alpha}\partial_{v}^{\beta}(I-P_{2})g\r_{L^{2}_{x,v}}\big)}_{W_{2}}\\
    &+\underbrace{\l\partial_{x}^{\alpha}\partial_{v}^{\beta}(I-P_{1})(gv\cdot\nabla_{x}\phi),\partial_{x}^{\alpha}\partial_{v}^{\beta}(I-P_{1})f\r_{L^{2}_{x,v}}+\l\partial_{x}^{\alpha}\partial_{v}^{\beta}(I-P_{2})(fv\cdot\nabla_{x}\phi),\partial_{x}^{\alpha}\partial_{v}^{\beta}(I-P_{2})g\r_{L^{2}_{x,v}}}_{W_{3}}\\
    &\underbrace{-\big(\l\partial_{x}^{\alpha}\partial_{v}^{\beta}(I-P_{1})(\nabla_{x}\phi\cdot\nabla_{v}g),\partial_{x}^{\alpha}\partial_{v}^{\beta}(I-P_{1})f\r_{L^{2}_{x,v}}+\l\partial_{x}^{\alpha}\partial_{v}^{\beta}(I-P_{2})(\nabla_{x}\phi\cdot\nabla_{v}f),\partial_{x}^{\alpha}\partial_{v}^{\beta}(I-P_{2})g\r_{L^{2}_{x,v}}\big)}_{W_{4}}
  \end{aligned}
  \end{equation}

Then, we estimate $W_{i}(1\leq i\leq4)$. It's easy to control $W_{1}$ by using the similar estimates \eqref{I2-alph4}, $G=\mathbb{P}G+(\mathbb{I}-\mathbb{P})G$ and Lemma \ref{Lmm-Q}
\begin{equation}\label{W1}
  \begin{aligned}
  W_{1}\lesssim\mathcal{E}_{N}^{\frac{1}{2}}(G,\phi)\mathcal{D}_{N,\varepsilon}(G).
  \end{aligned}
\end{equation}

We divide $W_{2}$ into six parts since $P_{2}g=d(t,x)$, $G=\mathbb{P}G+(\mathbb{I}-\mathbb{P})G$ and $|\beta|\geq1$
\begin{equation*}
\begin{aligned}
&W_{2}=\underbrace{-\frac{1}{\varepsilon}\l\partial_{x}^{\alpha}\partial_{v}^{\beta}(v\cdot\nabla_{x}P_{1}f),\partial_{x}^{\alpha}\partial_{v}^{\beta}(I-P_{1})f\r_{L^{2}_{x,v}}}_{W_{21}}\underbrace{-\frac{1}{\varepsilon}\l\partial_{x}^{\alpha}\partial_{v}^{\beta}(v\cdot\nabla_{x}(I-P_{1})f),\partial_{x}^{\alpha}\partial_{v}^{\beta}(I-P_{1})f\r_{L^{2}_{x,v}}}_{W_{22}}\\
&+\underbrace{\frac{1}{\varepsilon}\l\partial_{x}^{\alpha}\partial_{v}^{\beta}P_{1}(v\cdot\nabla_{x}P_{1}f),\partial_{x}^{\alpha}\partial_{v}^{\beta}(I-P_{1})f\r_{L^{2}_{x,v}}}_{W_{23}}+\underbrace{\frac{1}{\varepsilon}\l\partial_{x}^{\alpha}\partial_{v}^{\beta}P_{1}(v\cdot\nabla_{x}(I-P_{1})f),\partial_{x}^{\alpha}\partial_{v}^{\beta}(I-P_{1})f\r_{L^{2}_{x,v}}}_{W_{24}}\\
&\underbrace{-\frac{1}{\varepsilon}\l\partial_{x}^{\alpha}\partial_{v}^{\beta}(v\cdot\nabla_{x}P_{2}g),\partial_{x}^{\alpha}\partial_{v}^{\beta}(I-P_{2})g\r_{L^{2}_{x,v}}}_{W_{25}}\underbrace{-\frac{1}{\varepsilon}\l\partial_{x}^{\alpha}\partial_{v}^{\beta}(v\cdot\nabla_{x}(I-P_{2})g),\partial_{x}^{\alpha}\partial_{v}^{\beta}(I-P_{2})g\r_{L^{2}_{x,v}}}_{W_{26}}.
\end{aligned}
\end{equation*}

Next, we estimate $W_{2i}(i=1,\cdots,6)$ one by one.
\begin{equation}\no
\begin{aligned}
W_{21}&=-\frac{1}{\varepsilon}\l\nabla_{x}\partial_{x}^{\alpha}\partial_{v}^{\beta-1}P_{1}f,\partial_{x}^{\alpha}\partial_{v}^{\beta}(I-P_{1})f\r_{L^{2}_{x,v}}-\frac{1}{\varepsilon}\l v\cdot\nabla_{x}\partial_{x}^{\alpha}\partial_{v}^{\beta}P_{1}f,\partial_{x}^{\alpha}\partial_{v}^{\beta}(I-P_{1})f\r_{L^{2}_{x,v}}\\
&\lesssim\frac{1}{\varepsilon}\|f\|_{H^{N-1}_{x}L^{2}_{v}}\|(I-P_{1})f\|_{H^{N}_{x,v}(\nu)}\lesssim\mathcal{E}_{N}^{\frac{1}{2}}(G,\phi)\mathcal{D}_{N,\varepsilon}^{\frac{1}{2}}(G),
\end{aligned}
\end{equation}
where we mark use of \eqref{nu} and Lemma \ref{P1f}. For the term $W_{22}$, we obtain
\begin{equation*}
\begin{aligned}
W_{22}&=-\frac{1}{\varepsilon}\l\nabla_{x}\partial_{x}^{\alpha}\partial_{v}^{\beta-1}(I-P_{1})f,\partial_{x}^{\alpha}\partial_{v}^{\beta}(I-P_{1})f\r_{L^{2}_{x,v}}\lesssim\mathcal{E}_{N}^{\frac{1}{2}}(G,\phi)\mathcal{D}_{N,\varepsilon}^{\frac{1}{2}}(G),
\end{aligned}
\end{equation*}
since $\l v\cdot\nabla_{x}\partial_{x}^{\alpha}\partial_{v}^{\beta}(I-P_{1})f,\partial_{x}^{\alpha}\partial_{v}^{\beta}(I-P_{1})f\r_{L^{2}_{x,v}}=0$.

For the term $W_{23}$, we have $P_{1}(v\cdot\nabla_{x}P_{1}f)=(\nabla_{x}a+5\nabla_{x}c)\cdot v+\frac{1}{3}\nabla_{x}\cdot b|v|^{2}$ since $P_{1}f=a+b\cdot v+c|v|^{2}$. Therefore, we yields
\begin{equation*}
\begin{aligned}
W_{23}&\lesssim\frac{1}{\varepsilon}\|\nabla_{x}P_{1}f\|_{H^{N}_{x}L^{2}_{v}}\|(I-P_{1})f\|_{H^{N}_{x,v}(\nu)}\lesssim\mathcal{E}_{N}^{\frac{1}{2}}(G,\phi)\mathcal{D}_{N,\varepsilon}^{\frac{1}{2}}(G).
\end{aligned}
\end{equation*}

For the term $W_{24}$, we have
\begin{equation*}
\begin{aligned}
W_{24}&\lesssim\frac{1}{\varepsilon}\|P_{1}(v\cdot\nabla_{x}\partial_{x}^{\alpha}(I-P_{1})f)\|_{L^{2}_{x,v}}\|\partial_{x}^{\alpha}\partial_{v}^{\beta}(I-P_{1})f\|_{L^{2}_{x,v}}\lesssim\frac{1}{\varepsilon}\|(I-P_{1})f\|_{H_{x,v}^{N}(\nu)}\|(I-P_{1})f\|_{H^{N}_{x,v}}\\
&\lesssim\mathcal{E}_{N}^{\frac{1}{2}}(G,\phi)\mathcal{D}^{\frac{1}{2}}_{N,\varepsilon}(G),
\end{aligned}
\end{equation*}
where we use Lemma \ref{P1f} and \eqref{nu}. Furthermore, we estimate $W_{25},W_{26}$ by the similar arguments above that
\begin{equation*}
\begin{aligned}
W_{25}+W_{26}\lesssim\mathcal{E}_{N}^{\frac{1}{2}}(G,\phi)\mathcal{D}^{\frac{1}{2}}_{N,\varepsilon}(G).
\end{aligned}
\end{equation*}

As a result, we can estimate $W_{2}$ as follow
\begin{equation}\label{W2}
\begin{aligned}
W_{2}\lesssim\mathcal{E}_{N}^{\frac{1}{2}}(G,\phi)\mathcal{D}^{\frac{1}{2}}_{N,\varepsilon}(G).
\end{aligned}
\end{equation}

We divide $W_{3}$ into four parts and use the similar argument like $S_{3}$ that
\begin{equation}\label{W3}
\begin{aligned}
W_{3}\lesssim\mathcal{E}_{N}^{\frac{1}{2}}(G,\phi)\mathcal{D}_{N,\varepsilon}(G).
\end{aligned}
\end{equation}

For the term $W_{4}$, we divide it into seven parts and  use the similar argument like $S_{3}$ that
\begin{equation*}
\begin{aligned}
&W_{4}=\l v\cdot\nabla_{x}\phi,\partial_{x}^{\alpha}\partial_{v}^{\beta}(I-P_{2})g\cdot \partial_{x}^{\alpha}\partial_{v}^{\beta}(I-P_{1})f\r_{L^{2}_{x,v}}-\l\nabla_{x}\phi\cdot\partial_{x}^{\alpha}\partial_{v}^{\beta}P_{2}g,\partial_{x}^{\alpha}\partial_{v}^{\beta}(I-P_{1})f\r_{L^{2}_{x,v}}\\
&-\l \nabla_{x}\phi\cdot\nabla_{v}\partial_{x}^{\alpha}\partial_{v}^{\beta}P_{1}f,\partial_{x}^{\alpha}\partial_{v}^{\beta}(I-P_{2})g\r_{L^{2}_{x,v}}-\sum_{\tilde{\alpha}\leq\alpha}\l\nabla_{x}\partial_{x}^{\tilde{\alpha}}\phi\cdot\nabla_{v}\partial_{x}^{\alpha-\tilde{\alpha}}\partial_{v}^{\beta-1}g,\partial_{x}^{\alpha}\partial_{v}^{\beta}(I-P_{1})f\r_{L^{2}_{x,v}}\\
&-\sum_{\tilde{\alpha}\leq\alpha}\l\nabla_{x}\partial_{x}^{\tilde{\alpha}}\phi\cdot\nabla_{v}\partial_{x}^{\alpha-\tilde{\alpha}}\partial_{v}^{\beta-1}f,\partial_{x}^{\alpha}\partial_{v}^{\beta}(I-P_{2})g\r_{L^{2}_{x,v}}+\l \partial_{x}^{\alpha}\partial_{v}^{\beta}P_{1}(\nabla_{x}\phi\cdot\nabla_{v}g),\partial_{x}^{\alpha}\partial_{v}^{\beta}(I-P_{1})f\r_{L^{2}_{x,v}}\\
&+\l\partial_{x}^{\alpha}\partial_{v}^{\beta}P_{2}(\nabla_{x}\phi\cdot\nabla_{v}f),\partial_{x}^{\alpha}\partial_{v}^{\beta}(I-P_{2})g\r_{L^{2}_{x,v}}\lesssim\mathcal{E}_{N}^{\frac{1}{2}}(G,\phi)\mathcal{D}_{N,\varepsilon}(G).
\end{aligned}
\end{equation*}

 Consequently, by estimates above and the Young's inequality, we deduce that
\begin{equation}\label{Mixed-Inq-1}
  \begin{aligned}
   \frac{1}{2} \frac{d}{d t}\| \partial_{x}^{\alpha} \partial_{v}^{\beta} (\mathbb{I}-\mathbb{P})G \|^{2}_{L^{2}_{x,v}}&+\frac{2\delta_1}{\varepsilon^{2}}\| \partial_x^\alpha \partial_v^\beta (\mathbb{I}-\mathbb{P})G \|_{L^2_{x,v}(\nu)}^2-\frac{\delta_{2}}{\varepsilon^2} \sum_{\tilde{\beta} < \beta} \| \partial_x^\alpha \partial_v^{\tilde{\beta}}(\mathbb{I}-\mathbb{P})G   \|_{L^2_{x,v} (\nu)}^2\\
    &\lesssim\mathcal{E}_{N}^{\frac{1}{2}}(G,\phi)\mathcal{D}_{N,\varepsilon}(G)+\mathcal{E}_{N}^{\frac{1}{2}}(G,\phi)\mathcal{D}_{N,\varepsilon}^{\frac{1}{2}}(G),
  \end{aligned}
\end{equation}
for all $|\alpha| + |\beta| \leq N$ with $\beta \neq 0$ and for all $0 < \varepsilon \leq 1$.

Let $\eta > 0$ be sufficiently small number to be given latter. Plugging \eqref{Spatial-Bnd} in Lemma \ref{Lmm-Spatial} and $\eta$ times of \eqref{MM-Inq-Simple} in Remark \ref{Rmk-MM} to the $\eta^2$ times of the above inequality \eqref{Mixed-Inq-1}. Then there exists a small positive number $\eta_0 > 0$, independent of $\varepsilon$, such that for all $0 < \eta \leq \eta_0$
\begin{equation}\label{Mixed-Inq-2}
  \begin{aligned}
    & \frac{1}{2} \frac{d}{d t}\big(\| G\|^2_{H^N_x L^2_v} + \| \nabla_x \phi \|^2_{H^N_x} + \eta^2 \| \partial^\alpha_x \partial^\beta_v (\mathbb{I}-\mathbb{P}) G \|^2_{L^2_{x,v}}+ c_0 \eta E_N^{int} (G)\big) +\frac{\delta}{\varepsilon^2} \| (\mathbb{I}-\mathbb{P})G\|^2_{H^N_x L^2_v (\nu)} \\
    &+ \frac{\eta}{2} \| \nabla_x \mathbb{P} G\|^2_{H^{N-1}_x L^2_v}
    + \frac{\eta^2 \delta_{1}}{\varepsilon^2} \| \partial^\alpha_x \partial^\beta_x (\mathbb{I}-\mathbb{P})G\|^2_{L^2_{x,v} (\nu)}\lesssim\frac{1}{\varepsilon^2}\sum_{\tilde{\beta}< \beta} \| \partial^\alpha_x \partial^{\tilde{\beta}}_v (\mathbb{I}-\mathbb{P})G\|^2_{L^2_{x,v} (\nu)} \\
    &+\mathcal{E}_N^\frac{1}{2} (G, \phi) \mathcal{D}_{N, \varepsilon} (G)+\mathcal{E}_N^\frac{1}{2} (G, \phi) \mathcal{D}_{N,\varepsilon}^\frac{1}{2}(G),
  \end{aligned}
\end{equation}
for all $|\alpha| + |\beta| \leq N$ with $\alpha \neq 0$ and for all $0 < \varepsilon \leq 1$.

We notice that the quantity $ \frac{1}{\varepsilon^2} \sum_{\tilde{\beta} < \beta} \| \partial^\alpha_x \partial^{\tilde{\beta}}_v (\mathbb{I}-\mathbb{P})G\|^2_{L^2_{x,v} (\nu)} $ in the right-hand side of \eqref{Mixed-Inq-2} is still not controlled. However, the orders of $v$-derivatives in this quantity is strictly less than $|\beta|$, so that we can apply an induction over $|\beta| = k$, which ranges between $0$ and $N$, to obtain the inequality \eqref{Mixed-Inq-Closed}. For simplicity, we omit the details of the induction, and the proof of Lemma \ref{Lmm-Mixed-Est} is completed.
\end{proof}

\subsection{Global classical solutions: proof of Theorem \ref{Thm-global}}

From the differential inequality \eqref{Closed-Energy-Inq} in Remark \ref{Rmk-Mixed} and the energy bound \eqref{2.3} in Lemma \ref{Lmm-Local}, it is easy to deduce that for any $[t_1, t_2] \subseteq [0,T]$ and $0 < \varepsilon \leq 1$
\begin{equation}\no
  \begin{aligned}
    &\big| \mathbb{E}_{N, \eta_1} (G_{\varepsilon}, \phi_\varepsilon) (t_2) - \mathbb{E}_{N, \eta_1} (G_{\varepsilon}, \phi_\varepsilon) (t_1) \big| \lesssim \int_{t_1}^{t_2} \mathbb{E}_{N,\eta_1}^\frac{1}{2} (G_{\varepsilon}, \phi_\varepsilon) \mathbb{D}_{N, \varepsilon, \eta_1} (G_{\varepsilon}) d t\\
    &+\int_{t_1}^{t_2} \mathbb{E}_{N,\eta_1}^\frac{1}{2} (G_\varepsilon, \phi_\varepsilon) \mathbb{D}^\frac{1}{2}_{N, \varepsilon, \eta_1} (G_{\varepsilon}) d t\lesssim \sup_{0 \leq t \leq T} \mathcal{E}_N^\frac{1}{2} (G_{\varepsilon}, \phi_\varepsilon) \int_{t_1}^{t_2} \mathcal{D}_{N,\varepsilon} (G_{\varepsilon}) d t\\
     &+\sup_{0 \leq t \leq T} \mathcal{E}_N^\frac{1}{2} (G_{\varepsilon}, \phi_\varepsilon) \int_{t_1}^{t_2} \mathcal{D}^\frac{1}{2}_{N,\varepsilon} (G_{\varepsilon}) \d t\lesssim \int_{t_1}^{t_2} \frac{1}{\varepsilon^2} \| (\mathbb{I}-\mathbb{P}) G_{\varepsilon} \|^2_{H^N_{x,v} (\nu)} d t\\
     &+\int_{t_1}^{t_2} \frac{1}{\varepsilon} \| (\mathbb{I}-\mathbb{P}) G_{\varepsilon} \|_{H^N_{x,v} (\nu)} d t \lesssim|t_2 - t_1 | + |t_2 - t_1 |^{\frac{1}{2}} \rightarrow 0 \quad \textrm{as} \quad t_2 \rightarrow t_1 \,.
  \end{aligned}
\end{equation}

Therefore, the energy functional $ \mathbb{E}_{N, \eta_1} (G_\varepsilon, \phi_\varepsilon) $ of the local solution $(g_\varepsilon^{+}, g_\varepsilon^{-} , \phi_{\varepsilon})$ to \eqref{VPB-g}-\eqref{f's intial} constructed in Lemma \ref{Lmm-Local} is continuous in $t \in [0,T]$.

Then, we define
\begin{equation}\no
  \begin{aligned}
    T^* = \sup \Big\{ \xi \geq 0 ; \ C \sup_{t \in [0, \xi]} \mathbb{E}_{N, \eta_1}^\frac{1}{2} (G_\varepsilon, \phi_\varepsilon) (t) \leq \frac{1}{2} \Big\} \geq 0 \,.
  \end{aligned}
\end{equation}

By $ \mathbb{E}_{N, \eta_1} (G_{\varepsilon, 0} , \phi_{\varepsilon, 0}) \thicksim \mathcal{E}_N (G_{\varepsilon, 0}, \phi_{\varepsilon, 0}) $ and the initial condition in Theorem \ref{Thm-global}, we have
\begin{equation}\no
  \begin{aligned}
    \mathbb{E}_{N, \eta_1} (G_{\varepsilon, 0} , \phi_{\varepsilon, 0}) \leq C_0 \mathcal{E}_N (G_{\varepsilon, 0}, \phi_{\varepsilon, 0}) \leq C_0 \tau_0
  \end{aligned}
\end{equation}
for a positive constant $C_0 > 0$, where $\tau_0 \in (0,1]$ is small to be given latter.  Taking $0 < \tau_0 \leq \min \{ 1 , \delta , \frac{1}{16 C^2 C_0} \}$, where $\delta > 0$ is mentioned in Lemma \ref{Lmm-Local}, we deduce
\begin{equation}\label{Initial-Bnd}
  \begin{aligned}
    C \mathbb{E}_{N, \eta_1}^\frac{1}{2} (G_\varepsilon, \phi_\varepsilon) (0) = C \mathbb{E}_{N, \eta_1}^\frac{1}{2} (G_{\varepsilon, 0}, \phi_{\varepsilon, 0}) \leq C \sqrt{C_0 \tau_0} \leq \frac{1}{4} < \frac{1}{2} \,.
  \end{aligned}
\end{equation}

Then $T^* > 0$ by the continuity of $\mathbb{E}_{N, \eta_1} (G_\varepsilon, \phi_\varepsilon) (t)$. Consequently, we derive from the definition of $T^*$ and the inequality \eqref{Closed-Energy-Inq} in Remark \ref{Rmk-Mixed} that for all $t \in [0, T^*]$ and $0 < \varepsilon \leq 1$
\begin{equation}\no
  \begin{aligned}
    \frac{d}{d t} \mathbb{E}_{N, \eta_1} (G_\varepsilon , \phi_\varepsilon(t,x)) + \mathbb{D}_{N, \varepsilon \eta_1} (G_\varepsilon) \leq 0 \,.
  \end{aligned}
\end{equation}

Integrating the above inequality on $[0,t]$ for any $t \in [0, T^*]$, we have
\begin{equation}\label{Uniform-Global-Bnd-Inst}
  \begin{aligned}
    \mathbb{E}_{N,\eta_1} (G_\varepsilon, \phi_\varepsilon) (t) + \int_0^t \mathbb{D}_{N, \varepsilon, \eta_1} (G_\varepsilon) (\xi) d \xi \leq \mathbb{E}_{N, \eta_1} (G_{\varepsilon, 0} , \phi_{\varepsilon, 0}) \leq C_0 \tau_0,
  \end{aligned}
\end{equation}
uniformly for all $0 < \varepsilon \leq 1$, which immediately implies by the initial bound \eqref{Initial-Bnd}, that
\begin{equation}\no
  \begin{aligned}
    C \sup_{t \in [0, \tau]} \mathbb{E}_{N, \eta_1}^\frac{1}{2} (G_\varepsilon, \phi_\varepsilon) (t) \leq \frac{1}{4} < \frac{1}{2} \,.
  \end{aligned}
\end{equation}

Therefore, the continuity of $\mathbb{E}_{N, \eta_1} (G_\varepsilon, \phi_\varepsilon) (t)$ and the definition of $T^*$ imply that $T^* = + \infty$. In other words,  we can the local extend a local-in-time solution $(g_\varepsilon^{+} (t,x,v), g_\varepsilon^{-} (t,x,v), \phi_{\varepsilon}(t,x))$ constructed in Lemma \ref{Lmm-Local} to a global-in-time solution. Moreover, the uniform energy bound \eqref{Uniform-Global-Bnd} can be derived from \eqref{Equivalences-Energies} and \eqref{Uniform-Global-Bnd-Inst}. Then, the proof of Theorem \ref{Thm-global} is completed.  \qquad\qquad\qquad\qquad\qquad\qquad\qquad\qquad\qquad\qquad\qquad\qquad\ \ \  $\square$

\section{Limit to Two-fluid Incompressible NSFP Equations with Ohm's Law}\label{Sec: Limits}

In this section, my goal is deriving the two-fluid incompressible NSFP system with Ohm's law \eqref{NSFP} from the perturbed VPB system \eqref{VPB-g}-\eqref{f's intial} as $\varepsilon \rightarrow 0$, based on the uniform global energy bound \eqref{Uniform-Global-Bnd} in Theorem \ref{Thm-global}.

\subsection{Limits from the global energy estimate}

By Theorem \ref{Thm-global}, we have the Cauchy problem \eqref{VPB-g}-\eqref{f's intial} admits a global solution $(g_{\varepsilon}^{+}(t,x,v),g_\varepsilon^{-} (t,x,v))\in L^\infty (\R^+; H^N_{x,v}) $ and a uniform global energy estimate \eqref{Uniform-Global-Bnd}, there exists a positive constant $C$, independent of $\varepsilon$, such that
\begin{equation}\label{Unf-Bnd-1}
  \begin{aligned}
    \sup_{t \geq 0} \big( \|\frac{g_{\varepsilon}^{+}(t)+g_{\varepsilon}^{-}(t)}{2}\|^2_{H^N_{x,v}} +\| \frac{g_{\varepsilon}^{+}(t)-g_{\varepsilon}^{-}(t)}{2}\|^2_{H^N_{x,v}}+ \| \nabla_x \phi_\varepsilon (t) \|^2_{H^N_x}  \big) \leq C \,
  \end{aligned}
\end{equation}
and
\begin{equation}\label{Unf-Bnd-2}
  \begin{aligned}
    \int_{0}^{T}\| (I-P_{1})(\frac{g_{\varepsilon}^{+}(t)+g_{\varepsilon}^{-}(t)}{2})\|^2_{H^N_{x,v} (\nu)}+\| (I-P_{2})(\frac{g_{\varepsilon}^{+}(t)-g_{\varepsilon}^{-}(t)}{2})\|^2_{H^N_{x,v} (\nu)} d t \leq C \varepsilon^2,
  \end{aligned}
\end{equation}
for any given $T>0$.

From the uniform energy bound \eqref{Unf-Bnd-1}, there exist $g(t,x,v),\tilde{g}(t,x,v)\in L^\infty (\R^+ ; H^N_{x,v})$ and $\phi (t,x) \in L^\infty (\R^+ ; H^{N+1}_x)$ such that
\begin{equation}\label{Convgc-g-phi}
  \begin{aligned}
    & \frac{g_{\varepsilon}^{+}+g_{\varepsilon}^{-}}{2} \rightarrow g\quad \quad \ \ \textrm{weakly-}\star \ \textrm{for } t \geq 0, \ \quad \textrm{weakly in } H^N_{x,v} \,, \\
    & \frac{g_{\varepsilon}^{+}-g_{\varepsilon}^{-}}{2} \rightarrow \tilde{g}\quad \quad \ \ \textrm{weakly-}\star \ \textrm{for } t \geq 0, \ \quad \textrm{weakly in } H^N_{x,v} \,, \\
    & \nabla_x \phi_\varepsilon \rightarrow \nabla_{x}\phi \quad \textrm{weakly-}\star \ \textrm{for } t \geq 0, \ \quad \textrm{weakly in } H^N_x
  \end{aligned}
\end{equation}
as $\varepsilon \rightarrow 0$. We still employ the original notations of the sequences to denote by the subsequences throughout this paper for convenience although the limits may hold for some subsequences. The energy dissipation bound \eqref{Unf-Bnd-2} and the inequality $\| (I-P_{1})f\|^2_{H^N_{x,v}} \lesssim \| (I-P_{1})f \|^2_{H^N_{x,v} (\nu)},\| (I-P_{2})g\|^2_{H^N_{x,v}} \lesssim \| (I-P_{2})g \|^2_{H^N_{x,v} (\nu)}  $ implied by \eqref{nu} yield
\begin{equation}\label{Convgc-g-perp}
  \begin{aligned}
    &(I-P_{1})(\frac{g_\varepsilon^{+}+g_{\varepsilon}^{-}}{2}) \rightarrow 0 \quad \textrm{strongly in } \ L^2(\R^+; H^N_{x,v}),\\
    &(I-P_{2})( \frac{g_\varepsilon^{+}-g_{\varepsilon}^{-}}{2}) \rightarrow 0 \quad \textrm{strongly in } \ L^2(\R^+; H^N_{x,v})
  \end{aligned}
\end{equation}
as $\varepsilon \rightarrow 0$. Then, the first convergence in \eqref{Convgc-g-phi} and \eqref{Convgc-g-perp} yield
\begin{equation}\no
  \begin{aligned}
    (I-P_{1}) g= 0, (I-P_{2})\tilde{g}=0,
  \end{aligned}
\end{equation}
which imply that there exist  functions $\rho(t,x)$, $u(t,x)$, $\theta(t,x)$, $n(t,x) \in L^\infty (\R^+; H^N_x)$ such that
\begin{equation}\label{Limit-g}
  \begin{aligned}
    g(t,x,v) = \rho(t,x) + u(t,x)\cdot v + \theta(t,x) ( \frac{|v|^2}{2} - \frac{3}{2} ) , \tilde{g}(t,x,v)=n(t,x).
  \end{aligned}
\end{equation}

Next, we define the following fluid variables
\begin{equation}\label{define jw}
  \begin{aligned}
    &\rho_{\varepsilon} = \l \frac{g_{\varepsilon}^{+}+g_{\varepsilon}^{-}}{2} , 1 \r_{L^2_v} \,, \ u_{\varepsilon} = \l \frac{g_\varepsilon^{+}+g_{\varepsilon}^{-}}{2}, v \r_{L^2_v} , \theta_\varepsilon = \l \frac{g_\varepsilon^{+}+g_{\varepsilon}^{-}}{2}, \frac{|v|^2}{3} - 1 \r_{L^2_v},\\ & n_{\varepsilon}=\l\frac{g_\varepsilon^{+}+g_{\varepsilon}^{-}}{2},1\r_{L^{2}_{v}},
    j_{\varepsilon}=\frac{1}{\varepsilon}\l g_{\varepsilon}^{+}-g_{\varepsilon}^{-},v\r_{L^{2}_{v}}, w_{\varepsilon}=\frac{1}{\varepsilon}\l g_{\varepsilon}^{+}-g_{\varepsilon}^{-}, \frac{|v|^{2}}{3}-1\r_{L^{2}_{v}} \,.
  \end{aligned}
\end{equation}
where $n_{\varepsilon}$, $j_{\varepsilon}$ and $w_{\varepsilon}$ are called the electric charge, the electric current and the internal electric energy, respectively.

Taking inner products with the first $f$-equation of the perturbed VPB equation \eqref{VPB-g} in $L^2_v$ by $1$, $v$ and $ \frac{|v|^2}{3}-1 $, we have the local conservation laws:
\begin{equation}\label{Consevtn-Law-rho-u-theta-phi}
  \begin{cases}
    &\partial_{t}\rho_{\varepsilon}+\frac{1}{\varepsilon}\nabla_{x}\cdot u_{\varepsilon}=0,\\
    &\partial_{t}u_{\varepsilon}+\frac{1}{\varepsilon}\nabla_{x}(\rho_{\varepsilon}+\theta_{\varepsilon})+\nabla_{x}\cdot\l\hat{A},\frac{1}{\varepsilon}\mathcal{L}_{1}(I-P_{1})\big(\frac{g_{\varepsilon}^{+}+g_{\varepsilon}^{-}}{2}\big)\r_{L^{2}_{v}}=\frac{1}{2}n_{\varepsilon}\nabla_{x}\phi_{\varepsilon},\\
    &\partial_{t}\theta_{\varepsilon}+\frac{2}{3\varepsilon}\nabla_{x}\cdot u_{\varepsilon}+\frac{2}{3}\nabla_{x}\cdot\l\hat{B},\frac{1}{\varepsilon}\mathcal{L}_{1}(I-P_{1})\big(\frac{g_{\varepsilon}^{+}+g_{\varepsilon}^{-}}{2}\big)\r_{L^{2}}=\frac{\varepsilon}{2}j_{\varepsilon}\cdot\nabla_{x}\phi_{\varepsilon},\\
    &\Delta_{x}\phi_{\varepsilon}=n_{\varepsilon}.
  \end{cases}
\end{equation}

In order to reduce the equations about $n_{\varepsilon},j_{\varepsilon}$ and $w_{\varepsilon}$, we take inner products with the second $g$-equation of the perturbed VPB equation \eqref{VPB-g} in $L^2_v$ by $1$, $\tilde{\Phi}(v)$ and $\tilde{\Psi}(v)$ which defined in \eqref{hat{PP}}, we have
\begin{equation}\label{Consevtn-Law-n-j-w}
  \begin{cases}
    &\partial_{t}n_{\varepsilon}+\nabla_{x}\cdot j_{\varepsilon}=0,\\
    &j_{\varepsilon}=n_{\varepsilon} \mathcal{P}u_{\varepsilon}+\sigma(\nabla_{x}\phi_{\varepsilon}-\frac{1}{2}\nabla_{x}n_{\varepsilon})+j_{R,\varepsilon}\\
    &w_{\varepsilon}=n_{\varepsilon}\theta_{\varepsilon}+w_{R,\varepsilon},
  \end{cases}
\end{equation}
where $\mathcal{P}$ is the Leray projection on $\R^{3}$, $j_{R,\varepsilon}$ and $w_{R,\varepsilon}$ are defined as follow
\begin{equation}
\begin{aligned}
j_{R,\varepsilon}&=-n_{\varepsilon}\mathcal{P}^{\perp}u_{\varepsilon}-\varepsilon\l\partial_{t}(g_{\varepsilon}^{+}-g_{\varepsilon}^{-}),\tilde{\Phi}\r_{L^{2}_{v}}-\l v\cdot\nabla_{x}(I-P_{2})(g_{\varepsilon}^{+}-g_{\varepsilon}^{-}), \tilde{\Phi}\r_{L^{2}_{v}}\\
&+\l Q(n_{\varepsilon},(I-P_{1})(g_{\varepsilon}^{+}+g_{\varepsilon}^{-})),\tilde{\Phi}\r_{L^{2}_{v}}+\varepsilon \l Q(h_{\varepsilon}^{+}-h_{\varepsilon}^{-},g_{\varepsilon}^{+}+g_{\varepsilon}^{-}),\tilde{\Phi}\r_{L^{2}_{v}}\\
&-\varepsilon\l\nabla_{x}\phi_{\varepsilon}\cdot\nabla_{v}(g_{\varepsilon}^{+}+g_{\varepsilon}^{-}),\tilde{\Phi}\r_{L^{2}_{v}}+\varepsilon\l(g_{\varepsilon}^{+}+g_{\varepsilon}^{-})v\cdot\nabla_{x}\phi_{\varepsilon},\tilde{\Phi}\r_{L^{2}_{v}},\\
 w_{R,\varepsilon}&=-\frac{2\varepsilon}{3}\l\partial_{t}(g_{\varepsilon}^{+}-g_{\varepsilon}^{-}),\tilde{\Psi}\r_{L^{2}_{v}}-\frac{2}{3}\l v\cdot\nabla_{x}(I-P_{2})(g_{\varepsilon}^{+}-g_{\varepsilon}^{-}), \tilde{\Psi}\r_{L^{2}_{v}}\\
 &-\frac{2\varepsilon}{3}\l\nabla_{x}\phi_{\varepsilon}\cdot\nabla_{v}(g_{\varepsilon}^{+}+g_{\varepsilon}^{-}),\tilde{\Psi}\r_{L^{2}_{v}}+\frac{2\varepsilon}{3}\l(g_{\varepsilon}^{+}+g_{\varepsilon}^{-})v\cdot\nabla_{x}\phi_{\varepsilon},\tilde{\Psi}\r_{L^{2}_{v}}\\
 &+\frac{2}{3}\l Q(n_{\varepsilon},(I-P_{1})(g_{\varepsilon}^{+}+g_{\varepsilon}^{-})),\tilde{\Psi}\r_{L^{2}_{v}}+\frac{2\varepsilon}{3}\l Q(h_{\varepsilon}^{+}-h_{\varepsilon}^{-},g_{\varepsilon}^{+}+g_{\varepsilon}^{-}),\tilde{\Psi}\r_{L^{2}_{v}},\\
\end{aligned}
\end{equation}

Furthermore, the uniform bound \eqref{Unf-Bnd-1} yields
\begin{equation}\label{Unf-Bnd-3}
  \begin{aligned}
    \sup_{t \geq 0} \big( \nabla_{x}\phi_{\varepsilon}\|_{H^{N}_{x}}+\| \rho_\varepsilon\|_{H^N_x} + \| u_\varepsilon\|_{H^N_x} + \| \theta_\varepsilon \|_{H^N_x}+\|n_{\varepsilon}\|_{H^{N}_{x}}\big) \leq C \,.
  \end{aligned}
\end{equation}

Recalling \eqref{nu}, \eqref{Unf-Bnd-2} and \eqref{define jw}, we have
\begin{equation}\label{j and w}
\begin{aligned}
\int_{0}^{T}\|j_{\varepsilon}\|_{H^{N}_{x}}^{2}dt\lesssim 1, \int_{0}^{T}\|w_{\varepsilon}\|_{H^{N}_{x}}^{2}dt\lesssim 1,
\end{aligned}
\end{equation}
for any $T>0$, $N\geq 4$, where we use $j_{\varepsilon}=\frac{1}{\varepsilon}\l(I-P_{2}) (g_{\varepsilon}^{+}-g_{\varepsilon}^{-}),v\r_{L^{2}_{x}}$ and $w_{\varepsilon}=\frac{2}{3\varepsilon}\l(I-P_{2}) (g_{\varepsilon}^{+}-g_{\varepsilon}^{-}), \frac{|v|^{2}}{2}-\frac{3}{2}\r_{L^{2}_{x}}$.

Therefore,  we derive the following convergence from the convergence \eqref{Convgc-g-phi} and the form of limit function $g(t,x,v),\tilde{g}(t,x,v)$ given in \eqref{Limit-g} that
\begin{equation}\label{Con-rho-u-theta-n}
  \begin{aligned}
    & \rho_\varepsilon= \l \frac{g_{\varepsilon}^{+}+g_{\varepsilon}^{-}}{2} , 1 \r_{L^2_v} \rightarrow \l g , 1 \r_{L^2_v} = \rho \,, u_\varepsilon = \l \frac{g_{\varepsilon}^{+}+g_{\varepsilon}^{-}}{2}  , v \r_{L^2_v} \rightarrow \l g , v \r_{L^2_v} = u \,, \\
    & \theta_\varepsilon= \l \frac{g_{\varepsilon}^{+}+g_{\varepsilon}^{-}}{2}  , \frac{|v|^2}{3} - 1 \r_{L^2_v} \rightarrow \l g , \frac{|v|^2}{3} - 1 \r_{L^2_v} = \theta\,, n_{\varepsilon}=\l g_{\varepsilon}^{+}-g_{\varepsilon}^{-}, 1\r_{L^{2}_{v}}\rightarrow \l\tilde{g},1\r_{L^{2}_{v}}=n,
  \end{aligned}
\end{equation}
weakly-$\star$ for $t \geq 0$,  weakly in $H^{N}_x$ and strongly in $H^{N-1}_{x}$ as $\varepsilon \rightarrow 0$.  For the terms $j_{\varepsilon}$ and $w_{\varepsilon}$, there exist two functions $j,w\in L^{2}(\R ^{+},H^{N}_{x})$, such that
\begin{equation}\label{Con-j-w}
\begin{aligned}
j_{\varepsilon}=\frac{1}{\varepsilon}\l g_{\varepsilon}^{+}-g_{\varepsilon}^{-},v\r_{L^{2}_{v}}\rightarrow j, w_{\varepsilon}=\frac{1}{\varepsilon}\l g_{\varepsilon}^{+}-g_{\varepsilon}^{-},\frac{|v|^{2}}{2}-\frac{3}{2}\r_{L^{2}_{v}}\rightarrow w,
\end{aligned}
\end{equation}
weakly-$\star$ for $t \geq 0$,  weakly in $H^{N}_x$ and strongly in $H^{N-1}_{x}$ as $\varepsilon \rightarrow 0$.

\subsection{Convergences to limit equations}

In this subsection, my goal is to deduce the two-fluid incompressible NSFP equations \eqref{NSFP} with Ohm's law from the local conservation laws \eqref{Consevtn-Law-rho-u-theta-phi}, \eqref{Consevtn-Law-n-j-w} and the convergence obtained in the previous subsection.

\subsubsection{Incompressibility and Boussinesq relation}

The first equation of \eqref{Consevtn-Law-rho-u-theta-phi} and the uniform bound \eqref{Unf-Bnd-3} yield
\begin{equation}\no
  \begin{aligned}
    \nabla_x \cdot u_\varepsilon = - \varepsilon \partial_t \rho_\varepsilon\rightarrow 0
  \end{aligned}
\end{equation}
in the sense of distribution as $\varepsilon \rightarrow 0$ and combine with the convergence \eqref{Con-rho-u-theta-n} that
\begin{equation}\label{Incompressibility}
  \begin{aligned}
    \nabla_x \cdot u= 0 \,.
  \end{aligned}
\end{equation}

The second $u_\varepsilon$-equation of \eqref{Consevtn-Law-rho-u-theta-phi} yields
\begin{equation}\no
  \begin{aligned}
    \nabla_{x}(\rho_{\varepsilon}+\theta_{\varepsilon})=\frac{\varepsilon}{2}n_{\varepsilon}\nabla_{x}\phi_{\varepsilon}-\varepsilon\partial_{t}u_{\varepsilon}-\nabla_{x}\cdot\l \hat{A},\mathcal{L}_{1}(I-P_{1})(\frac{g_{\varepsilon}^{+}+g_{\varepsilon}^{-}}{2})\r_{L^{2}_{v}}.
  \end{aligned}
\end{equation}

The bound \eqref{Unf-Bnd-1} and \eqref{Unf-Bnd-3} show that
\begin{equation*}
  \begin{aligned}
    \|\frac{\varepsilon}{2}n_{\varepsilon}\nabla_{x}\phi_{\varepsilon} \|_{H^N_{x}} \lesssim\varepsilon \|n_{\varepsilon}\|_{H^{N}_{x}}\|\nabla_{x}\phi_{\varepsilon}\|_{H^{N}_{x}}\lesssim\varepsilon,
  \end{aligned}
\end{equation*}
which means that
\begin{equation}\no
  \begin{aligned}
    \frac{\varepsilon}{2}n_{\varepsilon}\nabla_{x}\phi_{\varepsilon}\rightarrow 0,
  \end{aligned}
\end{equation}
strongly in $L^\infty (\R^+ ; H^N_x)$ as $\varepsilon \rightarrow 0$.  The bound \eqref{Unf-Bnd-3} imply that $-\varepsilon\partial_{t} u_{\varepsilon}\rightarrow 0$ in the sense of distribution as $\varepsilon\rightarrow 0$.  Moreover,  the uniform energy dissipation bound \eqref{Unf-Bnd-2} and \eqref{nu} imply that
\begin{equation}\no
  \begin{aligned}
    \int_{0}^{T} \| \nabla_x \cdot \l \widehat{A} , \mathcal{L}_{1} (I-P_{1})(\frac{g_\varepsilon^{+}+g_{\varepsilon}^{-}}{2}) \r_{L^2_v} \|^2_{H^{N-1}_{x}} \d t \lesssim \int_{0}^{T}\| (I-P_{1})(\frac{g_\varepsilon^{+}+g_{\varepsilon}^{-}}{2}) \|^2_{H^{N}_{x,v} (\nu)} \d t \lesssim \varepsilon^{2} ,
  \end{aligned}
\end{equation}
for any given $T>0$. Then, we have
\begin{equation}\no
  \begin{aligned}
    \nabla_x \cdot \l \widehat{A} , \mathcal{L}_{1} (I-P_{1})(\frac{g_\varepsilon^{+}+g_{\varepsilon}^{-}}{2}) \r_{L^2_v} \rightarrow 0,
  \end{aligned}
\end{equation}
strongly in $L^2 (\R^+; H^{N-1}_x)$ as $\varepsilon \rightarrow 0$. In summary, we have show that
\begin{equation*}
  \begin{aligned}
    \nabla_x (\rho_\varepsilon+ \theta_\varepsilon) \rightarrow 0
  \end{aligned}
\end{equation*}
in the sense of distribution as $\varepsilon \rightarrow 0$, then we combine with the convergences \eqref{Unf-Bnd-1} and \eqref{Unf-Bnd-3}, yield the Boussinesq relation that
\begin{equation}\no
  \begin{aligned}
    \nabla_{x} (\rho+\theta) = 0,
  \end{aligned}
\end{equation}
which imply that
\begin{equation}\label{Boussinesq-Reltn}
\rho+\theta=0
\end{equation}
since $x\in\R^{3}$ and $\rho,\theta\in L^{\infty}(\R^{+},L^{2}(\R^{3}))$.

\subsubsection{Convergences of $\frac{3}{2}\theta_{\varepsilon}- \rho_{\varepsilon}$ and $\mathcal{P} u_\varepsilon$}

Before given the prove of convergence, we introduce the important lemma Aubin-Lions-Simon Lemma, a fundamental result of compactness in the study of nonlinear evolution problems(\cite{Boyer-Fabrie-2013-BOOK}).

\begin{lemma}[Aubin-Lions-Simon Theorem]\label{Lmm-Aubin-Lions-Simon}
	Suppose $B_0 \subset B_1 \subset B_2$ are three Banach spaces, the embedding of $B_1$ in $B_2$ is continuous and that the embedding of $B_0$ in $B_1$ is compact. Let $p$, $r$ be such that $1 \leq p, r \leq + \infty$. For $T > 0$, we define
	\begin{equation*}
	  \begin{aligned}
	    E_{p,r} = \big\{ u \in L^p (0,T; B_0), \ \partial_t u \in L^r (0,T; B_2) \big\} \,.
	  \end{aligned}
	\end{equation*}
Then
	\begin{enumerate}
		\item If $p < + \infty$, the embedding of $E_{p,r}$ in $L^p (0,T; B_1)$ is compact.
		
		\item If $p = + \infty$ and $r > 1$, the embedding of $E_{p,r}$ in $C(0,T; B_1)$ is compact.
	\end{enumerate}
\end{lemma}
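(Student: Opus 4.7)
The plan is to establish the Aubin-Lions-Simon Lemma by following the classical scheme that combines an Ehrling-type interpolation inequality with a compactness criterion in Bochner spaces (Fr\'echet-Kolmogorov for the $L^p$ case, Arz\'ela-Ascoli for the $C$ case). The key observation is that the compact embedding $B_0 \hookrightarrow B_1$ gives, by a standard contradiction argument, that for any $\eta>0$ there exists $C_\eta>0$ such that
\begin{equation*}
\|u\|_{B_1} \leq \eta \|u\|_{B_0} + C_\eta \|u\|_{B_2} \qquad \text{for all } u \in B_0.
\end{equation*}
This inequality is the bridge between the abstract assumptions on $B_0, B_1, B_2$ and the concrete task of verifying compactness in time-dependent spaces.

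First I would take a bounded sequence $\{u_n\} \subset E_{p,r}$ and show it is precompact in $L^p(0,T; B_1)$ (case 1). The Fr\'echet-Kolmogorov criterion requires two ingredients: (i) $\{u_n(t)\}$ is precompact in $B_1$ for a.e.\ $t$, which follows from the bound in $L^p(0,T;B_0)$ and the compact embedding $B_0 \hookrightarrow B_1$ after extracting a subsequence via a standard slicing argument; (ii) equicontinuity of translations, namely $\|\tau_h u_n - u_n\|_{L^p(0,T-h;B_1)} \to 0$ as $h \to 0$, uniformly in $n$. To establish (ii), I would apply the Ehrling inequality above pointwise in time to $\tau_h u_n - u_n$, bounding the $B_0$-norm by the uniform bound on $u_n$ and controlling the $B_2$-norm by integrating $\partial_t u_n$:
\begin{equation*}
\|u_n(t+h) - u_n(t)\|_{B_2} \leq \int_t^{t+h} \|\partial_t u_n(s)\|_{B_2}\, ds \leq h^{1-1/r}\|\partial_t u_n\|_{L^r(0,T;B_2)}.
\end{equation*}
Choosing $\eta$ small and then $h$ small, the two terms can be made jointly small, which yields the equicontinuity.

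For case 2 ($p = \infty$, $r > 1$), the strategy is analogous but now I would apply Arz\'ela-Ascoli in $C(0,T; B_1)$. Boundedness of $\{u_n\}$ in $L^\infty(0,T;B_0)$ together with the compactness of $B_0 \hookrightarrow B_1$ yields pointwise precompactness in $B_1$, while the H\"older-type estimate above (now with $r>1$, hence $h^{1-1/r}\to 0$) provides uniform equicontinuity in $C(0,T;B_1)$. A diagonal extraction on a countable dense subset of $[0,T]$ combined with equicontinuity then produces a subsequence converging uniformly on $[0,T]$ to a limit in $C(0,T;B_1)$.

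The main obstacle is the borderline behavior: when $r=1$ the estimate $h^{1-1/r}$ degenerates, which is precisely why case 2 excludes this value; handling the full range of $(p,r)$ rigorously requires care with the measurability and the application of Fubini when slicing in time. A secondary technical point is the rigorous verification of the Ehrling inequality on an arbitrary Banach triple with only continuous (not necessarily dense) embedding $B_1 \hookrightarrow B_2$, which I would address by the standard argument-by-contradiction producing a sequence with $\|u_n\|_{B_1}=1$, $\|u_n\|_{B_0}$ bounded, and $\|u_n\|_{B_2}\to 0$, then extracting a $B_1$-convergent subsequence and identifying the limit as zero to reach a contradiction.
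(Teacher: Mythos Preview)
The paper does not supply a proof of this lemma; it is stated as a classical compactness result and attributed to \cite{Boyer-Fabrie-2013-BOOK}. Your sketch follows the standard textbook route (Ehrling interpolation combined with a Kolmogorov-type translation criterion for $p<\infty$ and Arzel\`a--Ascoli for $p=\infty$), which is essentially the argument in that reference, so there is nothing to compare against in the paper itself.

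One small point worth tightening: your pointwise H\"older bound $\|u_n(t+h)-u_n(t)\|_{B_2}\le h^{1-1/r}\|\partial_t u_n\|_{L^r(B_2)}$ degenerates at $r=1$, and you correctly flag this as the reason case~2 excludes $r=1$. But case~1 still allows $r=1$, and there the uniform smallness of $\|\tau_h u_n-u_n\|_{L^p(0,T-h;B_2)}$ cannot come from this pointwise estimate. Instead one integrates first in $t$ and uses Fubini (as you allude to), obtaining for instance when $p=1$ the bound $\int_0^{T-h}\int_t^{t+h}\|\partial_t u_n(s)\|_{B_2}\,ds\,dt \le h\,\|\partial_t u_n\|_{L^1(0,T;B_2)}$; for general $p<\infty$ one interpolates or uses the averaging argument in Simon's original paper. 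This is the only place your outline would need a genuinely different estimate rather than just more care.
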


Next, we will give the proof of the convergence of $\frac{3}{2} \theta_{\varepsilon}- \rho_{\varepsilon}$. The third $\theta_{\varepsilon}$-equation of \eqref{Consevtn-Law-rho-u-theta-phi} multiplied by $ \frac{3}{2} $ minus one  times of the first $\rho_{\varepsilon}$-equation yields
\begin{equation}\label{Equ-theta-rho}
  \begin{aligned}
    \partial_{t}(\frac{3}{2} \theta_{\varepsilon} - \rho_{\varepsilon})&=\frac{3\varepsilon}{4}j_{\varepsilon}\nabla_{x}\phi_{\varepsilon}-\nabla_{x}\cdot\l\hat{B},\frac{1}{\varepsilon}\mathcal{L}_{1}\big(\frac{g_{\varepsilon}^{+}+g_{\varepsilon}^{-}}{2}\big)\r_{L^{2}_{v}}.
  \end{aligned}
\end{equation}

For the term $F_{1}=\frac{3\varepsilon}{4}j_{\varepsilon}\cdot\nabla_{x}\phi_{\varepsilon}$, the bound \eqref{j and w} and \eqref{Unf-Bnd-1} show that
\begin{equation}\no
\int_{0}^{T}\|F_{1}\|_{H^{N}_{x}}^{2}d t\lesssim\varepsilon\sup_{t\geq 0}\|\nabla_{x}\phi_{\varepsilon}\|_{H^{N}_{x}}^{2}\int_{0}^{T}\|\nabla_{x}\phi_{\varepsilon}\|^{2}_{H^{N}_{x}}d t\lesssim\varepsilon,
\end{equation}
for any given $T>0$. For the term $F_{2}=-\nabla_x \cdot \l \widehat{B} ,\frac{1}{\varepsilon}\mathcal{L}_{1} (I-P_{1})(\frac{ g_{\varepsilon}^{+}+g_{\varepsilon}^{-}}{2})\r_{L^2_v} $,  the bound \eqref{Unf-Bnd-2} show that
\begin{equation}\no
\int_{0}^{T}\|F_{2}\|_{H^{N-1}_{x}}^{2}dt\lesssim\frac{1}{\varepsilon^{2}}\int_{0}^{T}\|(I-P_{1})(\frac{g^{+}_{\varepsilon}+g_{\varepsilon}^{-}}{2})\|^{2}_{H^{N}_{x}L^{2}_{v}}dt\lesssim\frac{1}{\varepsilon^{2}}\int_{0}^{T}\|(I-P_{1})(\frac{g^{+}_{\varepsilon}+g^{+}_{\varepsilon}}{2})\|^{2}_{H^{N}_{x,v}(\nu)}d t\lesssim1,
\end{equation}
 for any given $T>0$. In summary, we gain
\begin{equation}\label{Bnd-rho-theta-t}
  \begin{aligned}
    \| \partial_{t} (\frac{3}{2}\theta_{\varepsilon} - \rho_{\varepsilon} ) \|_{L^{2} (\R^{+}; H^{N-1}_{x})} \lesssim 1,
  \end{aligned}
\end{equation}
for all $0 < \varepsilon \leq 1$. It is easily derived from the uniform bound \eqref{Unf-Bnd-3} that
\begin{equation}\label{Bnd-rho-theta}
  \begin{aligned}
    \| \frac{3}{2} \theta_{\varepsilon}- \rho_{\varepsilon} \|_{L^\infty (\R^+; H^N_x)} \lesssim \|\theta_{\varepsilon}\|_{L^\infty (\R^+; H^N_x)} +\|\rho_{\varepsilon}\|_{L^\infty (\R^+; H^N_x)}\lesssim1,
  \end{aligned}
\end{equation}
for all $0 < \varepsilon \leq 1$. Then, from Aubin-Lions-Simon Theorem in Lemma \ref{Lmm-Aubin-Lions-Simon}, the bounds \eqref{Bnd-rho-theta-t} and \eqref{Bnd-rho-theta} imply exist $\widetilde{\theta} (t,x) \in L^\infty (\R^+; H^N_x) \cap C (\R^+ ; H^{N-1}_x)$ such that
\begin{equation}\no
  \begin{aligned}
    \frac{3}{2} \theta_{\varepsilon}-  \rho_{\varepsilon} \rightarrow \widetilde{\theta},
  \end{aligned}
\end{equation}
strongly in $C(\R^+; H^{N-1}_x)$ as $\varepsilon \rightarrow 0$. Combining with the convergences \eqref{Con-rho-u-theta-n}, we know that $ \widetilde{\theta} = \frac{3}{2} \theta - \rho \in L^\infty (\R^+; H^N_x) \cap C (\R^+ ; H^{N-1}_x)  $. Consequently, we have
\begin{equation}\label{Convgnc-rho-theta}
  \begin{aligned}
    \frac{3}{2} \theta_{\varepsilon} - \rho_{\varepsilon} \rightarrow \frac{3}{2} \theta - \rho
  \end{aligned}
\end{equation}
strongly in $C(\R^+; H^{N-1}_x)$ as $\varepsilon \rightarrow 0$.

Next we will prove the convergence of $\mathcal{P} u_{\varepsilon}$, where $\mathcal{P}$ is the Leray projection on $\R^3$. Employing $\mathcal{P}$ on the second $u_\varepsilon$-equation of \eqref{Consevtn-Law-rho-u-theta-phi} yields
\begin{equation}\label{Equ-Pu}
  \begin{aligned}
    \partial_{t} \mathcal{P} u_{\varepsilon} + \mathcal{P} \nabla_x \cdot \l \widehat{A} , \frac{1}{\varepsilon} \mathcal{L}_{1} (I-P_{1})(\frac{g_{\varepsilon}^{+}+g_{\varepsilon}^{-}}{2})\r_{L^2_v}=\frac{1}{2}\mathcal{P}(n_{\varepsilon}\nabla_{x}\phi_{\varepsilon}).
  \end{aligned}
\end{equation}

The bound $\| A \|_{L^2_v} \lesssim 1$ and $\|\mathcal{P}f\|_{L^{2}_{x}}\leq \|f\|_{L^{2}_{x}}$, \eqref{nu}, \eqref{Unf-Bnd-1}, \eqref{Unf-Bnd-2} and \eqref{Unf-Bnd-3} implies that
\begin{equation}\label{Bnd-Pu-t}
  \begin{aligned}
\| \partial_{t} \mathcal{P} u_{\varepsilon}\|_{L^2 (0, T ; H^{N-1}_x)} &\lesssim  \frac{1}{\varepsilon} \| (I-P_{1}) (\frac{g_{\varepsilon}^{+}+g_{\varepsilon}^{-}}{2}) \|_{H^N_{x,v} (\nu)} + \| n_\varepsilon \|_{L^\infty (\R^+; H^N_x)} \| \nabla_x \phi_\varepsilon \|_{L^\infty (\R^+ ; H^N_x)} T\lesssim 1,
  \end{aligned}
\end{equation}
for any $T > 0$ and $0 < \varepsilon \leq 1$. Moreover, the bound \eqref{Unf-Bnd-3} show that for any given $T > 0$ and $0 < \varepsilon \leq 1$
\begin{equation}\label{Bnd-Pu}
  \begin{aligned}
    \| \mathcal{P} u_{\varepsilon}\|_{L^\infty (0,T; H^{N}_{x})} \lesssim 1 \,.
  \end{aligned}
\end{equation}

Then, Aubin-Lions-Simon Theorem in Lemma \ref{Lmm-Aubin-Lions-Simon}, the bounds \eqref{Bnd-Pu-t}, \eqref{Bnd-Pu} imply that there exists a $\widetilde{u}\in L^\infty (\R^+ ; H^N_x) \cap C (\R^+ ; H^{N-1}_x)$ such that
\begin{equation*}
  \begin{aligned}
    \mathcal{P} u_{\varepsilon}\rightarrow \widetilde{u},
  \end{aligned}
\end{equation*}
strongly in $ C (0, T; H^{N-1}_x) $ for any given $T > 0$ as $\varepsilon \rightarrow 0$. The convergences \eqref{Con-rho-u-theta-n} and incompressibility \eqref{Incompressibility}yield that $ \widetilde{u}= \mathcal{P}u= u\in L^\infty (\R^+; H^N_x) \cap C (\R^+; H^{N-1}_x) $. In summary, we have
\begin{equation}\label{Convgnc-Pu}
  \begin{aligned}
    \mathcal{P} u_{\varepsilon} \rightarrow u,
  \end{aligned}
\end{equation}
strongly in $ C (\R^+; H^{N-1}_x) $ as $\varepsilon\rightarrow 0$. Consequently, we have
\begin{equation}\label{Convgnc-Pu-perp}
  \begin{aligned}
    \mathcal{P}^\perp u_{\varepsilon}\rightarrow 0,
  \end{aligned}
\end{equation}
weakly-$\star$ in $t \geq 0$, weakly in $H^N_x$ and strongly in $H^{N-1}_x$ as $\varepsilon \rightarrow 0$, where $\mathcal{P}^\perp$ is the orthogonal projection of $\mathcal{P}$ in $L^2_x$. Recalling the equation of $n_{\varepsilon}$ in \eqref{Consevtn-Law-n-j-w},\eqref{Con-rho-u-theta-n} and the bound in \eqref{j and w}, it's easy to deduce that $n\in L^{\infty}(\R^{+}, H^{N}_{x})\cap C (0, T; H^{N-1}_x)$ for any given $T>0$ and
\begin{equation}\label{Con-n}
n_{\varepsilon} \rightarrow n,
\end{equation}
strongly in $ C (\R^+; H^{N-1}_x)$ as $\varepsilon\rightarrow 0$. As for the term $\phi_{\varepsilon}$, we can obtain that
\begin{equation}\no
\Delta_{x}\partial_{t}\phi_{\varepsilon}=-\nabla_{x}\cdot j_{\varepsilon},
\end{equation}
in the sense of distribution about $t$, where \eqref{Consevtn-Law-rho-u-theta-phi} and \eqref{Consevtn-Law-n-j-w} are used. Employing the same argument above, we can deduce by the bound of \eqref{Unf-Bnd-1},\eqref{Unf-Bnd-2} and the  convergence of \eqref{Convgc-g-phi} that $\nabla_{x}\phi\in L^{\infty}(\R^{+},H^{N}_{x})\cap C(0,T,H^{N-1}_{x})$ for any given $T>0$. We also have
\begin{equation}\label{Con-phi}
\Delta_{x}\phi_{\varepsilon}\rightarrow \Delta_{x}\phi,
\end{equation}
strongly in $ C (\R^+; H^{N-2}_x)$ as $\varepsilon\rightarrow 0$.

\subsubsection{Equations of $\rho$, $u$, $n$, $j$ and $w$}

In this subsection, we will derive the two-fluid incompressible NSFP system with Ohm's law from \eqref{Equ-Pu}, \eqref{Equ-theta-rho}, \eqref{Consevtn-Law-n-j-w} and the last Poisson equation of \eqref{Consevtn-Law-rho-u-theta-phi}.  The following term will be calculated first
\begin{equation*}
  \begin{aligned}
    \l \widehat{\Theta} , \frac{1}{\varepsilon}\mathcal{ L}_{1} (I-P_{1})(\frac{g_{\varepsilon}^{+}+g_{\varepsilon}^{-}}{2})\r_{L^2_v} \,,
  \end{aligned}
\end{equation*}
where $\Theta = A$ or $B$. According to reference \cite{BGL1,Arsenio-SaintRaymond-2016}, we have
\begin{equation}\label{A-Dissipation}
  \begin{aligned}
    \l \widehat{A} , \frac{1}{\varepsilon}\mathcal{ L}_{1} (I-P_{1})(\frac{g_{\varepsilon}^{+}+g_{\varepsilon}^{-}}{2})\r_{L^2_v} = u_\varepsilon\otimes u_\varepsilon - \frac{|u_\varepsilon|^2}{3} I - \mu \Sigma (u_\varepsilon) + R_{\varepsilon} \,
  \end{aligned}
\end{equation}
and
\begin{equation}\label{B-Dissipation}
  \begin{aligned}
    \l \widehat{B} , \frac{1}{\varepsilon}\mathcal{ L}_{1} (I-P_{1})(\frac{g_{\varepsilon}^{+}+g_{\varepsilon}^{-}}{2})\r_{L^2_v} = \frac{5}{2} u_\varepsilon \theta_\varepsilon - \frac{5}{2} \kappa \nabla_x \theta_\varepsilon+ R_{\varepsilon} \,,
  \end{aligned}
\end{equation}
where $\Sigma (u_\varepsilon) = \nabla_x u_\varepsilon+ (\nabla_x u_\varepsilon)^\top - \frac{2}{3} \nabla_x \cdot u_\varepsilon I$, $\mu$, $\kappa$ are given in \eqref{mu-kappa}, and $R_{\varepsilon}$ are of the form
\begin{equation}\label{R-eps-Xi}
  \begin{aligned}
R_{\varepsilon}&=2Q\big(P_{1}(\frac{g_{\varepsilon}^{+}+g_{\varepsilon}^{-}}{2})\big),(I-P_{1})(\frac{g_{\varepsilon}^{+}+g_{\varepsilon}^{-}}{2})\big)+2Q\big((I-P_{1})(\frac{g_{\varepsilon}^{+}+g_{\varepsilon}^{-}}{2}),P_{1}(\frac{g_{\varepsilon}^{+}+g_{\varepsilon}^{-}}{2})\big)\\
    &+2Q\big((I-P_{1})(\frac{g_{\varepsilon}^{+}+g_{\varepsilon}^{-}}{2})\big),(I-P_{1})(\frac{g_{\varepsilon}^{+}+g_{\varepsilon}^{-}}{2})+\varepsilon \frac{g_{\varepsilon}^{+}-g_{\varepsilon}^{-}}{2}v\cdot\nabla_{x}\phi_{\varepsilon}-\varepsilon\nabla_{v}\frac{g_{\varepsilon}^{+}-g_{\varepsilon}^{-}}{2}\cdot\nabla_{x}\phi_{\varepsilon}\\
    &-v\cdot\nabla_{x}(I-P_{1})(\frac{g_{\varepsilon}^{+}+g_{\varepsilon}^{-}}{2})-\varepsilon\partial_{t}\frac{g_{\varepsilon}^{+}+g_{\varepsilon}^{-}}{2}.
  \end{aligned}
\end{equation}

The relation \eqref{A-Dissipation}, decomposition $u_\varepsilon = \mathcal{P} u_\varepsilon + \mathcal{P}^\perp u_\varepsilon$ and\eqref{Equ-Pu} yield that
\begin{equation}\label{Evolution-Pu}
  \begin{aligned}
    \partial_{t} \mathcal{P} u_{\varepsilon} + \mathcal{P} \nabla_{x} \cdot ( \mathcal{P} u_\varepsilon &\otimes \mathcal{P} u_{\varepsilon}) - \mu \Delta_{x} \mathcal{P} u_{\varepsilon}= \frac{1}{2} \mathcal{P}(n_{\varepsilon}\cdot\nabla_{x}\phi_{\varepsilon})+ R_{\varepsilon, u} \,,
  \end{aligned}
\end{equation}
where
\begin{equation}\label{R-eps-u}
  \begin{aligned}
    &R_{\varepsilon,u}=-\mathcal{P}\nabla_{x}\cdot R_{\varepsilon}-\mathcal{P}\nabla_{x}\cdot(\mathcal{P}^{\perp}\otimes\mathcal{P}+\mathcal{P}\otimes\mathcal{P}^{\perp}+\mathcal{P}^{\perp}\otimes\mathcal{P}^{\perp}).
  \end{aligned}
\end{equation}

The relation \eqref{B-Dissipation}, \eqref{Boussinesq-Reltn}, $u_{\varepsilon} = \mathcal{P} u_{\varepsilon} + \mathcal{P}^\perp u_{\varepsilon}$ and \eqref{Equ-theta-rho} yield that
\begin{equation}\label{Evolution-theta-rho}
  \begin{aligned}
    \partial_{t}\theta_{\varepsilon}+\mathcal{P}u_{\varepsilon}\cdot\nabla_{x}\theta_{\varepsilon}-\kappa\Delta_{x}\theta_{\varepsilon}=R_{\varepsilon,\theta}\,,
  \end{aligned}
\end{equation}
where
\begin{equation}\label{R-eps-theta}
  \begin{aligned}
R_{\varepsilon, \theta}=\frac{3\varepsilon}{10}j_{\varepsilon}\cdot\nabla_{x}\phi_{\varepsilon}-\frac{2}{5}\nabla_{x}\cdot R_{\varepsilon}-\theta_{\varepsilon}\nabla_{x}\cdot u_{\varepsilon}-\mathcal{P}u_{\varepsilon}\cdot\nabla_{x}\theta_{\varepsilon}.
  \end{aligned}
\end{equation}

Then we can take the limit from \eqref{Evolution-Pu} to gain the $u$-equation of \eqref{NSFP}. For any given $T > 0$, let a vector-valued text function $\psi (t,x) \in C^1 (0,T; C_0^\infty (\R^3)) $ with $\nabla_x \cdot \psi = 0$, $\psi (0,x) = \psi_0 (x) \in C_0^\infty (\R^3)$ and $\psi (t,x) = 0$ for $t \geq T'$, where $T' < T$. Then we have
\begin{equation}\no
  \begin{aligned}
     \int_{0}^{T} \int_{\R^3} \partial_t \mathcal{P} u_{\varepsilon} \cdot \psi (t,x) d x d t
    =&  - \int_{\R^3} \mathcal{P} u_\varepsilon (0,x) \psi (0,x) d x - \int_0^T \int_{\R^3} \mathcal{P} u_{\varepsilon} \partial_{t} \psi (t,x) d x d t \\
    = & - \int_{\R^3} \mathcal{P}\l \frac{g_{\varepsilon, 0}^{+}+g_{\varepsilon,0}^{-}}{2} , v \r_{L^2_v} \psi_0 (x) d x - \int_0^T \int_{\R^3} \mathcal{P} u_{\varepsilon} \partial_{t} \psi (t,x) d x d t \,.
  \end{aligned}
\end{equation}

The initial conditions in Theorem \ref{Thm-Limit} and the convergence \eqref{Convgnc-Pu} yield that
\begin{equation}\no
  \begin{aligned}
    \int_{\R^3} \mathcal{P}\l\frac{g_{\varepsilon, 0}^{+}+g_{\varepsilon,0}^{-}}{2} , v \r_{L^2_v} \psi_0 (x) d x \rightarrow \int_{\R^3} \mathcal{P} \l \frac{g_{0}^{+}+g_{0}^{-}}{2}, v \r_{L^2_v} \psi_0 (x) d x = \int_{\R^3} \mathcal{P} u_0 (x)  \psi_0 (x) d x,
  \end{aligned}
\end{equation}
and
\begin{equation}\no
  \begin{aligned}
    \int_0^T \int_{\R^3} \mathcal{P} u_\varepsilon\cdot \partial_t \psi (t,x) d x d t \rightarrow \int_0^T \int_{\R^3} u \cdot \partial_t \psi (t,x) d x d t,
  \end{aligned}
\end{equation}
as $\varepsilon \rightarrow 0$. Then, we have
\begin{equation}\label{Converge-Pu-1}
  \begin{aligned}
    \int_0^T \int_{\R^3} \partial_t \mathcal{P} u_\varepsilon \psi (t,x) d x d t \rightarrow - \int_{\R^3} \mathcal{P} u_0 (x) \psi_0 (x) d x - \int_0^T \int_{\R^3} u \partial_t \psi (t,x) d x d t,
  \end{aligned}
\end{equation}
as $\varepsilon \rightarrow 0$. The strong convergence \eqref{Convgnc-Pu} implies that
\begin{equation}\label{Converge-Pu-2}
  \begin{aligned}
    & \mathcal{P} \nabla_x \cdot ( \mathcal{P} u_\varepsilon \otimes \mathcal{P} u_\varepsilon ) \rightarrow \mathcal{P} \nabla_x \cdot (u \otimes u) \quad \textrm{strongly in } \quad C(\R^+; H^{N-2}_x) \,, \\
    & \mu \Delta_x \mathcal{P} u_\varepsilon \rightarrow \mu \Delta_x u\quad \textrm{strongly in } \quad C(\R^+; H^{N-3}_x),
  \end{aligned}
\end{equation}
as $\varepsilon \rightarrow 0$. From the bound \eqref{Unf-Bnd-3}, strong convergence \eqref{Con-n} and \eqref{Con-phi}, we deduce that
\begin{equation}\label{Converge-Pu-3}
  \begin{aligned}
    \frac{1}{2}\mathcal{P}(n_{\varepsilon}\nabla_{x}\phi_{\varepsilon})\rightarrow\frac{1}{2}\mathcal{P}(n\nabla_{x}\phi)
  \end{aligned}
\end{equation}
weakly-$\star$ for $t \geq 0$, strongly in $H^{N-1}_x$ as $\varepsilon \rightarrow 0$.

Next, we will prove
\begin{equation}\label{Converge-Pu-4}
   \begin{aligned}
     R_{\varepsilon, u} \rightarrow 0
   \end{aligned}
\end{equation}
in the sense of distribution as $\varepsilon \rightarrow 0$, where $R_{\varepsilon, u}$ is defined in \eqref{R-eps-u}. In fact, by applying the convergences \eqref{Convgnc-Pu} and \eqref{Convgnc-Pu-perp}, we obtain
\begin{equation}
  \begin{aligned}
    \mathcal{P} \nabla_x \cdot ( \mathcal{P} u_\varepsilon\otimes \mathcal{P}^\perp u_\varepsilon + \mathcal{P}^\perp u_\varepsilon \otimes \mathcal{P} u_\varepsilon + \mathcal{P}^\perp u_\varepsilon \otimes \mathcal{P}^\perp u_\varepsilon) \rightarrow 0
  \end{aligned}
\end{equation}
weakly-$\star$ in $t \geq 0$ and strongly in $H^{N-1}_x$  as $\varepsilon\rightarrow 0$. Finally, the definition of $R_{\varepsilon}$ in \eqref{R-eps-Xi} and the convergences \eqref{Unf-Bnd-1}, \eqref{Unf-Bnd-2}, \eqref{Unf-Bnd-3}, \eqref{Convgnc-Pu}, \eqref{Convgnc-Pu-perp} tell us that
\begin{equation}\label{Div-R-eps-Xi-to-0}
  \begin{aligned}
    \nabla_x \cdot R_{\varepsilon} \rightarrow 0
  \end{aligned}
\end{equation}
in the sense of distribution as $\varepsilon \rightarrow 0$. In summary, we prove the convergence \eqref{Converge-Pu-4}. The convergences \eqref{Converge-Pu-1}, \eqref{Converge-Pu-2}, \eqref{Converge-Pu-3}, \eqref{Converge-Pu-4} and the incompressibility \eqref{Incompressibility} that $u \in L^\infty (\R^+; H^N_x) \cap C (\R^+; H^{N-1}_x)$ yield that
\begin{equation}
  \begin{cases}
    \partial_t u + \mathcal{P} \nabla_x \cdot (u\otimes u) - \mu \Delta_x u =\frac{1}{2} \mathcal{P}(n\nabla_{x}\phi) \,, \\
    \nabla_x \cdot u = 0 \,,
  \end{cases}
\end{equation}
with initial data
\begin{equation}
  \begin{aligned}
    u (0, x) = \mathcal{P} u_0 (x) \,.
  \end{aligned}
\end{equation}

Next, we take the limit from \eqref{Evolution-theta-rho} to the third $\theta$-equation in \eqref{NSFP} as $\varepsilon \rightarrow 0$. For any given $T > 0$, taking $\zeta (t,x)$ be a test function satisfying $\zeta (t,x) \in C^1 (0,T; C_0^\infty (\R^3))$ with $\zeta (0,x) = \zeta_0 (x) \in C_0^\infty (\R^3)$ and $\zeta (t,x) = 0$ for $t \geq T'$, where $T' < T$. The initial conditions in Theorem \ref{Thm-Limit} and the strong convergence \eqref{Convgnc-rho-theta} show that
\begin{equation}\label{Converge-theta-1}
  \begin{aligned}
    & \int_0^T \int_{\R^3} \partial_t \theta_{\varepsilon}(t,x) \zeta(t,x) d x d t \\
    &=  - \int_{\R^3} \l \frac{g^{+}_{\varepsilon , 0}(x,v)+g^{-}_{\varepsilon,0}(x,v)}{2} , \frac{|v|^2}{3} - 1 \r_{L^2_v} \zeta_0 (x) d x - \int_0^T \int_{\R^3} \theta_{\varepsilon}\partial_t \zeta (t,x) d x d t \\
    &\rightarrow - \int_{\R^3} \l \frac{g^{+}_{\varepsilon , 0}(x,v)+g^{-}_{\varepsilon,0}(x,v)}{2} , \frac{|v|^2}{3} - 1 \r_{L^2_v} \zeta_0 (x) d x - \int_0^T \int_{\R^3}\theta(t,x)\partial_t \zeta (t,x) d x d t \\
    &= - \int_{\R^3} \theta_{0} (x) \zeta_0 (x) d x - \int_0^T \int_{\R^3} \theta (t,x) \partial_t \zeta (t,x) d x d t
  \end{aligned}
\end{equation}
as $\varepsilon \rightarrow 0$. The strong convergences \eqref{Convgnc-Pu}, \eqref{Convgnc-rho-theta} and \eqref{Boussinesq-Reltn} yield that
\begin{equation}\label{Converge-theta-2}
  \begin{aligned}
    & \mathcal{P}u_{\varepsilon}\cdot\nabla_{x}\theta_{\varepsilon}\rightarrow  u\cdot\nabla_{x}\theta \quad \textrm{strongly in } C(\R^{+}; H^{N-2}_{x}) \,, \\
    & \kappa \Delta_{x}\theta_{\varepsilon} \rightarrow \kappa \Delta_{x}\theta \quad \textrm{strongly in } C(\R^+; H^{N-3}_x)
  \end{aligned}
\end{equation}
as $\varepsilon \rightarrow 0$.

Next, we will prove
\begin{equation}\label{Converge-theta-3}
  \begin{aligned}
    R_{\varepsilon, \theta} \rightarrow 0
  \end{aligned}
\end{equation}
in the sense of distribution as $\varepsilon \rightarrow 0$, where $R_{\varepsilon, \theta}$ is given in \eqref{R-eps-theta}. In fact, the convergences \eqref{Unf-Bnd-1}, \eqref{Unf-Bnd-3}, \eqref{Con-j-w} and \eqref{Convgnc-Pu-perp} show that almost any $t \geq 0$
\begin{equation}\label{Limit-theta-1}
  \begin{aligned}
    & \frac{3\varepsilon}{10}j_{\varepsilon}\cdot\nabla_{x}\phi_{\varepsilon} \rightarrow 0 \quad \textrm{strongly in } H^{N-1}_x \,,  \theta_{\varepsilon}(\nabla_{x}\cdot u_{\varepsilon}) \rightarrow 0 ,\quad \textrm{strongly in } H^{N-2}_x
  \end{aligned}
\end{equation}
as $\varepsilon \rightarrow 0$. The convergence \eqref{Div-R-eps-Xi-to-0} tells us
\begin{equation}\label{Limit-theta-2}
  \begin{aligned}
    -\frac{2}{5} \nabla_x \cdot R_{\varepsilon} \rightarrow 0
  \end{aligned}
\end{equation}
in the sense of distribution as $\varepsilon \rightarrow 0$. In summary, the limits \eqref{Limit-theta-1} and \eqref{Limit-theta-2} show the convergence \eqref{Converge-theta-3}. And the limits \eqref{Converge-theta-1}, \eqref{Converge-theta-2} and \eqref{Converge-theta-3} implies that
\begin{equation}
  \begin{aligned}
    \partial_{t}\theta+u\cdot\nabla_{x}\theta-\kappa\Delta_{x}\theta=0,
  \end{aligned}
\end{equation}
with the initial data
\begin{equation}
  \begin{aligned}
    \theta(0,x) =\theta_{0}(x) \,.
  \end{aligned}
\end{equation}

Finally, the convergence \eqref{Convgc-g-phi} and \eqref{Con-rho-u-theta-n} tell us that
\begin{equation}\label{Limit-Delta-phi}
  \begin{aligned}
   & \Delta_x \phi_\varepsilon \rightarrow \Delta_x \phi,  n_{\varepsilon}  \rightarrow n,
  \end{aligned}
\end{equation}
weakly-$\star$ for $t \geq 0$ and weakly in $H^{N-1}_x$ as $\varepsilon \rightarrow 0$. Then, the convergence \eqref{Limit-Delta-phi} and equation \eqref{Consevtn-Law-rho-u-theta-phi} show that
\begin{equation}\no
\Delta_{x}\phi=n.
\end{equation}

Next, we deduce the equation of $n$, $j$ and $w$. Recalling the equation of $n_{\varepsilon}$ in \eqref{Consevtn-Law-n-j-w},  For any given $T>0$, taking $\eta(t,x)$ be a test function satisfying $\eta(t,x)\in C^{1}(0,T; C_{0}^{\infty}(\R^{3}))$ with $\eta(0,x)=\eta_{0}(x)\in C_{0}^{\infty}(\R^{3})$ and $\eta(t,x)=0$ for $t\geq T^{'}$, where $T'<T$, The initial conditions in Theorem \ref{Thm-Limit} and the strong convergence \eqref{Con-n} show that
\begin{equation}\label{Converge-n-1}
  \begin{aligned}
    & \int_0^T \int_{\R^3} \partial_t n_{\varepsilon}(t,x) \eta(t,x) d x d t \\
    &=  - \int_{\R^3} \l g_{\varepsilon,0}^{+}(x,v)-g_{\varepsilon,0}^{-}(x,v), 1 \r_{L^2_v} \zeta_0 (x) d x - \int_0^T \int_{\R^3} n_{\varepsilon}\partial_t \eta (t,x) d x d t \\
    &\rightarrow - \int_{\R^3} \l g_{\varepsilon,0}^{+}(x,v)-g_{\varepsilon,0}^{-}(x,v), 1 \r_{L^2_v} \eta_0 (x) d x - \int_0^T \int_{\R^3}n(t,x)\partial_t \eta (t,x) d x d t \\
    &= - \int_{\R^3} n_{0} (x) \eta_0 (x) d x - \int_0^T \int_{\R^3} n (t,x) \partial_t \eta (t,x) d x d t
  \end{aligned}
\end{equation}
as $\varepsilon \rightarrow 0$. Then, the convergence \eqref{Con-j-w} yield that
\begin{equation}\label{Converge-n-2}
\nabla_{x}\cdot j_{\varepsilon} \rightarrow \nabla_{x}\cdot j
\end{equation}
weakly-$\star$ for $t\geq0$, weakly in $H^{N-1}_{x}$, strongly in $H^{N-2}_{x}$. Combining the convergence of \eqref{Converge-n-1} and \eqref{Converge-n-1}, we yield that
\begin{equation}\label{Limit-n}
\partial_{t}n+\nabla_{x}\cdot j=0,
\end{equation}
with initial data
\begin{equation}\no
n(0,x)=n_{0}(x)=\Delta_{x}\phi_{0}(x).
\end{equation}

Applying the convergence \eqref{Con-n}, \eqref{Convgnc-Pu} and \eqref{Con-phi}, we obtain
\begin{equation}\label{Converge-j-1}
  \begin{aligned}
    &n_{\varepsilon}\mathcal{P}u_{\varepsilon} \rightarrow n u \quad \textrm{strongly in } \quad C(\R^+; H^{N-2}_x) \,, \\
    & \sigma(\nabla_{x}\phi_{\varepsilon}-\frac{1}{2}\nabla_{x}n_{\varepsilon}) \rightarrow \sigma(\nabla_{x}\phi-\frac{1}{2}\nabla_{x}n)\quad \textrm{strongly in } \quad C(\R^+; H^{N-2}_x).
  \end{aligned}
\end{equation}

It's easy to deduce that
\begin{equation}\label{Converge-j-1}
j_{R,\varepsilon}\rightarrow 0
\end{equation}
in the sense of distribution as $\varepsilon\rightarrow 0$, where we make use of the bound \eqref{Unf-Bnd-1}, \eqref{Unf-Bnd-2} and Lemma \ref{a-b}. As result, we have
\begin{equation}\label{Limit-j}
j= n u+\sigma(\nabla_{x}\phi-\frac{1}{2}\nabla_{x}n),
\end{equation}
with initial data
\begin{equation}\no
j(0,x)=n_{0}(x)\mathcal{P}u_{0}(x)+\sigma(\nabla_{x}\phi_{0}(x)-\frac{1}{2}\nabla_{x}n_{0}(x))=\Delta_{x}\phi_{0}(x)\mathcal{P}u_{0}(x)+\sigma(\nabla_{x}\phi_{0}(x)-\frac{1}{2}\nabla_{x}\Delta_{x}\phi_{0}(x)).
\end{equation}

Employing the same argument above, we have
\begin{equation}\label{Limit-w}
w=n\theta,
\end{equation}
with initial data
\begin{equation}\no
w(0,x)=n_{0}(x)\theta_{0}(x)=\Delta_{x}\phi_{0}(x)\theta_{0}(x).
\end{equation}

Combining the all above convergence results, we have obtain that $(u,\theta,n,j,w,\nabla_{x}\phi) \in L^\infty (\R^+ ; H^N_x)$ with $ (u, \theta,n,j,w,\nabla_{x}\phi) \in C(\R^+; H^{N-1}_x) $ obey the following two-fluid incompressible NSFP equations with Ohm's law
\begin{equation*}
  \left\{
    \begin{array}{l}
       \partial_{t}u+u\cdot\nabla_{x}u+\nabla_{x}p=\mu\Delta_{x}u+\frac{1}{2}n\nabla_{x}\phi\,,\\
       \partial_{t}\theta+u\cdot\nabla_{x}\theta=\kappa\Delta_{x}\theta\,,\\
       \partial_{t}n+u\cdot\nabla_{x}n+\sigma n=\frac{\sigma}{2}\Delta_{x}n\,,\\
       j=n u+\sigma(\nabla_{x}\phi-\frac{1}{2}\nabla_{x}n)\,,\\
       \text{div}_{x}u=0, \rho+\theta=0, \Delta_{x}\phi=n,w=n\theta,
    \end{array}
  \right.
\end{equation*}
with initial data
\begin{equation}\no
  \begin{aligned}
    &u(0,x) = \mathcal{P}u_{0}(x) \,, \quad \theta(0,x)=\theta_{0}(x) \,,\quad \phi(0,x)=\phi_{0}(x)\,,\quad\rho(0,x)=-\theta_{0}(x)\,,\quad n(0,x)=\Delta_{x}\phi_{0}(x)\\
    &j(0,x)=\Delta_{x}\phi_{0}(x)\mathcal{P}u_{0}(x)+\sigma(\nabla_{x}\phi_{0}(x)-\frac{1}{2}\nabla_{x}\Delta_{x}\phi_{0}(x))\,,\quad w(0,x)=\Delta_{x}\phi_{0}(x)\theta_{0}(x).
  \end{aligned}
\end{equation}
Consequently, the proof of Theorem \ref{Thm-Limit} is completed.

\section*{Acknowledgment}

This work was supported by the grants from the National Natural Science Foundation of
China under contract No. 11971360 and No. 11731008.


\bigskip

\bibliography{reference}

\end{document}